\newcommand{\ad}{\mathrm{ad}}
\newcommand{\der}{\mathrm{der}}
\newcommand{\pr}{\mathrm{pr}}
\newcommand{\cO}{\mathcal{O}}
\newcommand{\bfT}{\mathbf{T}}
\newcommand{\x}{\mathbf{x}}
\newcommand{\bbT}{\mathbb{T}}
\newcommand{\bbU}{\mathbb{U}}
\newcommand{\FF}{\mathbb{F}}
\newcommand{\bbF}{\mathbb{F}}
\newcommand{\bfP}{\mathbf{P}}
\newcommand{\bbP}{\mathbb{P}}
\newcommand{\bfM}{\mathbf{M}}
\newcommand{\bbA}{\mathbb{A}}
\newcommand{\bbR}{\mathbb{R}}
\newcommand{\bbQ}{\mathbb{Q}}
\newcommand{\QQ}{\mathbb{Q}}
\newcommand{\bfQ}{\mathbf{Q}}
\newcommand{\bbC}{\mathbb{C}}
\newcommand{\bbS}{\mathbb{S}}
\newcommand{\bbG}{\mathbb{G}}
\newcommand{\bfG}{\mathbf{G}}
\newcommand{\bbH}{\mathbb{H}}
\newcommand{\cH}{\mathcal{H}}
\newcommand{\bbK}{\mathbb{K}}
\newcommand{\bfK}{\mathbf{K}}
\newcommand{\bbV}{\mathbb{V}}
\newcommand{\bbB}{\mathbb{B}}
\newcommand{\bfB}{\mathbf{B}}
\newcommand{\bfN}{\mathbf{N}}
\newcommand{\bbN}{\mathbb{N}}
\newcommand{\bbM}{\mathbb{M}}
\newcommand{\bbL}{\mathbb{L}}
\newcommand{\bfL}{\mathbf{L}}
\newcommand{\cL}{\mathcal{L}}
\newcommand{\bfV}{\mathbf{V}}
\newcommand{\cR}{\mathcal{R}}
\newcommand{\bfH}{\mathbf{H}}
\newcommand{\bfU}{\mathbf{U}}
\newcommand{\cS}{\mathcal{S}}
\newcommand{\cN}{\mathcal N}
\newcommand{\cX}{\mathcal X}
\newcommand{\G}{\mathsf{G}}
\newcommand{\cG}{\mathcal{G}}
\newcommand{\bbW}{\mathbb{W}}
\newcommand{\triv}{\mathrm{triv}}
\DeclareMathOperator{\tr}{tr}
\DeclareMathOperator{\Ad}{Ad}
\DeclareMathOperator{\Ind}{Ind}
\DeclareMathOperator{\Stab}{Stab}
\DeclareMathOperator{\Tr}{Tr}
\DeclareMathOperator{\Char}{char}
\DeclareMathOperator{\GL}{GL}
\DeclareMathOperator{\GSp}{GSp}
\DeclareMathOperator{\Inf}{Inf}
\newcommand{\from}{\colon}
\theoremstyle{plain}
\newtheorem{thm}{Theorem}[section]
\newtheorem{theorem}[thm]{Theorem}
\newtheorem*{thm*}{Theorem}
\newtheorem{proposition}[thm]{Proposition}
\newtheorem{lemma}[thm]{Lemma}
\newtheorem{corollary}[thm]{Corollary}
\theoremstyle{definition}
\newtheorem{definition}[thm]{Definition}
\newtheorem{claim}{Claim}[thm]
\theoremstyle{remark}
\newtheorem*{claim*}{Claim}
\newtheorem{remark}[thm]{Remark}
\theoremstyle{theorem}
\newtheorem{displaytheorem}{Theorem}
\newtheorem*{maintheorem}{Main Theorem}
\newtheorem*{scalarconjecture*}{Scalar Product Conjecture}
\newtheorem*{displaycorollary}{Corollary}
\newcommand{\dashover}[2][\mathop]{#1{\mathpalette\df@over{{\dashfill}{#2}}}}
\newcommand{\fillover}[2][\mathop]{#1{\mathpalette\df@over{{\solidfill}{#2}}}}
\newcommand{\df@over}[2]{\df@@over#1#2}
\newcommand\df@@over[3]{%
  \vbox{
    \offinterlineskip
    \ialign{##\cr
      #2{#1}\cr
      \noalign{\kern1pt}
      $\m@th#1#3$\cr
    }
  }%
}
\newcommand{\dashfill}[1]{%
  \kern-.5pt
  \xleaders\hbox{\kern.5pt\vrule height.4pt width \dash@width{#1}\kern.5pt}\hfill
  \kern-.5pt
}
\newcommand{\dash@width}[1]{%
  \ifx#1\displaystyle
    2pt
  \else
    \ifx#1\textstyle
      1.5pt
    \else
      \ifx#1\scriptstyle
        1.25pt
      \else
        \ifx#1\scriptscriptstyle
          1pt
        \fi
      \fi
    \fi
  \fi
}
\newcommand{\solidfill}[1]{\leaders\hrule\hfill}
\title{The scalar product formula \\  for parahoric Deligne--Lusztig induction}
\author{Charlotte Chan}
\address{Department of Mathematics, University of Michigan, 2074 East Hall, 530 Church Street, Ann Arbor, MI 48105, USA.}\email{charchan@umich.edu}
\begin{document}


\maketitle

\begin{abstract}
  Parahoric Deligne--Lusztig induction gives rise to positive-depth representations of parahoric subgroups of $p$-adic groups. The most fundamental basic question about parahoric Deligne--Lusztig induction is whether it satisfies the scalar product formula. We resolve this conjecture for all Howe-factorizable split-generic pairs $(T,\theta)$---in particular, for all characters $\theta$ when $T$ is elliptic and $p$ is not a torsion prime for the root system of the $p$-adic group.  
\end{abstract}

\section{Introduction}\label{sec:introduction}

In the early 2000s, Lusztig established \cite{Lus04} an analogue of Deligne--Lusztig induction for algebraic groups arising as jet schemes $\bbG_r$ of connected reductive groups $\bbG$ over finite fields $\FF_q$. Lusztig defined, for the jet scheme $\bbT_r$ of any maximal torus $\bbT$ of $\bbG$, a functor
\begin{equation*}
  R_{\bbT_r, \bbB_r}^{\bbG_r} \from \cR(\bbT_r(\FF_q)) \to \cR(\bbG_r(\FF_q))
\end{equation*}
where $\bbB_r$ is the jet scheme of a Borel subgroup  over $\overline \FF_q$ which contains $\bbT_{\overline \FF_q}$ and $\cR$ denotes the Grothendieck ring. 
Lusztig proved that if a character $\theta \from \bbT_r(\FF_q) \to \overline \QQ_\ell^\times$ satisfies a strong genericity condition, then for any $(\theta',\bbT_r', \bbB_r')$, 
\begin{equation}\label{eq:scalar product}
  \langle R_{\bbT_r,\bbB_r}^{\bbG_r}(\theta), R_{\bbT_r',\bbB_r'}^{\bbG_r}(\theta') \rangle = \sum_{w \in W_{\bbG_r}(\bbT_r,\bbT_r')(\FF_q)} \langle \theta, {}^w \theta' \rangle.
\end{equation}
In particular, this formula proves that under the above genericity condition, $R_{\bbT_r,\bbB_r}^{\bbG_r}(\theta)$ is independent of the choice of $\bbB_r$ and that $R_{\bbT_r,\bbB_r}^{\bbG_r}(\theta)$ is irreducible if it has trivial stabilizer in the Weyl group. These results were extended by Stasinski \cite{Sta09} to mixed-characteristic jet schemes and by the author and Ivanov \cite{CI21-RT} to algebraic groups---also denoted by $\bbG_r$---arising from Moy--Prasad quotients of parahoric subgroup schemes associated to unramified maximal tori $T$ of connected reductive groups $G$ over non-archimedean local fields $F$. 

From the perspective of the representation theory of $p$-adic groups $G(F)$, this more general setting of $\bbG_r$ arising from parahoric subgroups is essential. When $T \subset G$ is elliptic, the author and Oi \cite{CO21} proved that under the aforementioned genericity condition and a largeness condition on $q$, the representations $R_{\bbT_r,\bbB_r}^{\bbG_r}(\theta)$ give rise to $L$-packets of toral supercuspidal representations in the sense of \cite{Ree08,DS18}. A serious obstruction to proving such a comparison result for regular supercuspidal representations \cite{Kal19} beyond the toral setting is establishing \eqref{eq:scalar product} in general, which is arguably the most fundamental basic question about the functor $R_{\bbT_r,\bbB_r}^{\bbG_r}$:

\begin{scalarconjecture*}
  Fix $(\theta,\bbT_r, \bbB_r)$. For all $(\theta',\bbT_r', \bbB_r')$, the formula \eqref{eq:scalar product} holds.
\end{scalarconjecture*}

When $r = 0$, it is a classical theorem of Deligne and Lusztig \cite{DL76} that the scalar product formula holds for all $(\theta,\bbT_0, \bbB_0)$. For $r>0$, this conjecture is obviously false as stated, the simplest example of which was discussed in \cite{CI21-RT}: when $\bbT_r$ is the jet scheme associated to the split torus of $\bbG$, then $R_{\bbT_r,\bbB_r}^{\bbG_r}(\theta) = \Ind_{\bbB_r(\FF_q)}^{\bbG_r(\FF_q)}(\tilde \theta)$ where $\tilde \theta = \theta \circ \pr$ for $\pr \from \bbB_r(\FF_q) \to \bbT_r(\FF_q)$. If $\theta$ factors through a character on $\bbT_0$ in general position, then $R_{\bbT_r,\bbB_r}^{\bbG_r}(\theta)$ is not irreducible for any $r > 0$.

As mentioned above, for $r>0$, thanks to \cite{Lus04,Sta09,CI21-RT} the Scalar Product Conjecture is known to be true for arbitrary (unramified maximal) $T$ if $\theta$ satisfies a genericity condition which we call \textit{weakly $(T,G)$-generic} (it is a strong nontriviality condition on the restriction of $\theta$ to $\ker(\bbT_r^\sigma \to \bbT_{r-1}^\sigma)$). Outside this setting, results are sparser: when $G$ is an inner form of $\GL_n$, this was proved by the author and Ivanov in \cite{CI_loopGLn}, the techniques of which were vastly generalized by work of Dudas and Ivanov in \cite{DI20}, which established the Scalar Product Conjecture for $T$ Coxeter under a mild root-theoretic assumption on $q$ ($q > 5$ suffices). In these works beyond weak $(T,G)$-genericity, $\bbB_r$ was forced to be chosen to be optimal for the methods involved.

In the present paper, we establish a novel approach and prove:

\begin{maintheorem}\label{thm:scalar product}
  If $p$ is not a torsion prime for the root system of $G$, then the Scalar Product Conjecture holds for all split-generic $(\theta,\bbT_r)$.
\end{maintheorem}

The condition on $p$ comes about because our approach allows us to establish the Scalar Product Conjecture whenever $(\theta,\bbT_r)$ is \textit{Howe-factorizable}. In general, from any $(\theta,\bbT_r)$, one can extract a nested sequence of subsets of roots. If $p$ is a torsion prime for the root system of $G$, then it can happen that these subsets are not Levi subsystems, in which case $(\theta, \bbT_r)$ does not have a Howe factorization. It is a result of Kaletha \cite{Kal19} that if $p$ is not a torsion prime, then \textit{every} $(\theta, \bbT_r)$ has a Howe factorization. The torsion primes of irreducible root systems are \cite[Corollary 1.13]{Ste75}:
\begin{center}
  \begin{tabular}{| c | c | c | c | c | c | c | c |} \hline
    & $B_n$ & $D_n$ & $E_6$ & $E_7$ & $E_8$ & $F_4$ & $G_2$ \\ \hline
    torsion primes & 2 & 2 & 2, 3 & 2, 3 & 2, 3, 5 & 2,3 & 2 \\ \hline
  \end{tabular}
\end{center}
For the exposition's sake, we will implicitly assume for the rest of the introduction that either: $p$ is not a torsion prime for $G$, or $(\theta,\bbT_r)$ has a Howe factorization.

Now let us say a few words about the notion of split-genericity. For a fixed maximal torus $T \subset G$, the proportion of split-generic characters $\theta$ of $\bbT_r$ depends on the ``degree of ellipticity'' of $T$: on one extreme, if $T$ is the split torus, then $\theta$ is split-generic if and only if it is $(T,G)$-generic, and on the other extreme, if $T$ is elliptic, then all $\theta$ are split-generic. We see therefore that the Main Theorem includes all previously known progress towards the Conjecture and also explains  the spectrum of dependence on $(\theta,\bbT_r)$. We expect, but cannot prove at present, that this result is sharp; precisely, we expect that if $(\theta,\bbT_r)$ is not split-generic, then there exists a triple $(\theta',\bbT_r',\bbB_r')$ for which the Scalar Product Formula does not hold. 

We note the following direct consequence of the Main Theorem: 

\begin{displaycorollary}
  For $T$ elliptic, $R_{\bbT_r,\bbB_r}^{\bbG_r}(\theta)$ is 
  irreducible if and only if $\Stab_{W_{\bbG_r(\FF_q)}(\bbT_r)}(\theta) = \{1\}.$
\end{displaycorollary}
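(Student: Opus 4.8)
The plan is to deduce the Corollary directly from the Main Theorem by specializing the scalar product formula to the diagonal. First, I would use the observation recorded in the introduction that when $T$ is elliptic every character $\theta$ of $\bbT_r(\FF_q)$ makes the triple $(\theta,\bbT_r,\bbB_r)$ split-generic, so that the Main Theorem applies with no restriction on $\theta$. Taking $(\theta',\bbT_r',\bbB_r')=(\theta,\bbT_r,\bbB_r)$ in \eqref{eq:scalar product} then yields
\begin{equation*}
  \langle R_{\bbT_r,\bbB_r}^{\bbG_r}(\theta),\, R_{\bbT_r,\bbB_r}^{\bbG_r}(\theta)\rangle = \sum_{w\in W_{\bbG_r}(\bbT_r,\bbT_r)(\FF_q)}\langle\theta,{}^w\theta\rangle .
\end{equation*}

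Next I would evaluate the right-hand side. Since $\theta$ is a one-dimensional character, $\langle\theta,{}^w\theta\rangle$ equals $1$ if ${}^w\theta=\theta$ and $0$ otherwise; and, by Lang's theorem (using that $\bbT_r$ is connected), $W_{\bbG_r}(\bbT_r,\bbT_r)(\FF_q)$ is canonically identified with the relative Weyl group $W_{\bbG_r(\FF_q)}(\bbT_r)$ acting on $\bbT_r(\FF_q)$, hence on its characters. Therefore the sum equals $\#\Stab_{W_{\bbG_r(\FF_q)}(\bbT_r)}(\theta)$, so that
\begin{equation*}
  \langle R_{\bbT_r,\bbB_r}^{\bbG_r}(\theta),\, R_{\bbT_r,\bbB_r}^{\bbG_r}(\theta)\rangle = \#\Stab_{W_{\bbG_r(\FF_q)}(\bbT_r)}(\theta).
\end{equation*}

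Finally I would invoke the standard norm argument: a virtual representation $V$ with $\langle V,V\rangle=1$ is $\pm$ an irreducible, whereas $\langle V,V\rangle\ge 2$ forces $V$ to be neither irreducible nor the negative of an irreducible. Hence $R_{\bbT_r,\bbB_r}^{\bbG_r}(\theta)$ is the class of an irreducible, up to sign, exactly when $\Stab_{W_{\bbG_r(\FF_q)}(\bbT_r)}(\theta)=\{1\}$, and not otherwise. The only point that requires genuine care---indeed the main, if minor, obstacle---is fixing the sign in the former case, i.e.\ upgrading ``$\pm$ an irreducible'' to ``an irreducible''; this I would settle either by invoking the $\epsilon$-normalization of parahoric Deligne--Lusztig induction established in \cite{CI21-RT}, or by computing the value of the virtual character at the identity (a formal dimension) and checking that it has the expected sign.
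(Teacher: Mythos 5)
Your argument is exactly the deduction the paper intends: ellipticity makes every $\theta$ split-generic, so the Main Theorem applied with $(\theta',\bbT_r',\bbB_r')=(\theta,\bbT_r,\bbB_r)$ gives $\langle R_{\bbT_r,\bbB_r}^{\bbG_r}(\theta),R_{\bbT_r,\bbB_r}^{\bbG_r}(\theta)\rangle=\#\Stab_{W_{\bbG_r(\FF_q)}(\bbT_r)}(\theta)$, and the standard norm-one criterion yields the equivalence; the paper states the Corollary as an immediate consequence of the scalar product formula without any further argument. The sign point you flag (upgrading ``$\pm$ an irreducible'' to ``an irreducible'') is the only delicate step, and the paper is no more explicit about it than you are, so your proposal follows the same route.
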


The new approach to the Scalar Product Conjecture presented in this paper is to describe $R_{\bbT_r,\bbB_r}^{\bbG_r}(\theta)$ in terms of a sequence of parahoric Lusztig inductions associated to a(ny) Howe factorization of $\theta$. The present paper appears to be the first work to observe this structure and also the first to define parahoric Lusztig induction, though both are almost obvious expectations \textit{a posteriori}. As the name suggests, parahoric Lusztig induction is a natural generalization of classical Lusztig induction \cite{Lus76} in the parahoric setting of \cite{CI21-RT}. We will need several general properties of parahoric Lusztig induction, which we establish in Section \ref{sec:parahoric lusztig}. Of these results, Proposition \ref{prop:twisting} is the most nontrivial (for example, see Remarks \ref{rem:small q} and \ref{rem:green insufficient} for discussions on subtleties and on a proof of this fact in $r=0$ that fails for $r>0$).

This description of $R_{\bbT_r,\bbB_r}^{\bbG_r}(\theta)$ in terms of parahoric Lusztig inductions will certainly illuminate the relationship between positive-depth Deligne--Lusztig induction and the author's recent work with Bezrukavnikov \cite{BC24} constructing generic character sheaves on parahoric group schemes.

The first serious calculation comes in Section \ref{sec:generic mackey} in establishing a \textit{generic Mackey formula} (Theorem \ref{thm:generic mackey}). In general, a Mackey formula should relate the Lusztig induction functors and their adjoints, giving a formula for the composition ${}^* R_{\bbL_r,\bbQ_r}^{\bbG_r} \circ R_{\bbM_r,\bbP_r}^{\bbG_r}$. The conjectural Mackey formula of course \textit{contains} the Scalar Product Conjecture, and even in the classical setting $r=0$, establishing the Mackey formula is well documented to be difficult: it has been resolved in many (but not all) special cases by work of \cite{DL83,DM20,BMM93,BM11,Tay18,Lus20a}. In the special case that (at least) one of $L$ or $M$ is a torus, the $r=0$ formula can be obtained by a single argument due to Deligne--Lusztig \cite{DL83} and Lusztig (see \cite{DM20}). We will prove (Theorem \ref{thm:generic mackey}) a formula for ${}^*R_{\bbT_r,\bbB_r}^{\bbG_r} \circ R_{\bbM_r,\bbP_r}^{\bbG_r}$ under a genericity condition governed by $M$; the proof combines Lusztig's $r=0$ argument together with techniques established in \cite{Lus04,Sta09,CI21-RT}. 

\begin{displaytheorem}\label{thm:display mackey}
  Let $\rho$ be any representation of $\bbM_r^\sigma$ which is $(M,G)$-generic. Then
  \begin{equation*}
    {}^*R_{\bbT_r, \bbB_r}^{\bbG_r} \circ R_{\bbM_r, \bbP_r}^{\bbG_r}(\rho)  = \sum_{w \in \bbT_r^\sigma \backslash \cS(\bbT_r, \bbM_r)^\sigma/\bbM_r^\sigma} {}^* R_{{}^w\bbT_r, {}^w\bbB_r \cap \bbM_r}^{\bbM_r}(\ad(w^{-1})^*\rho).
  \end{equation*}
\end{displaytheorem}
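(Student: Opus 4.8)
The plan is to realize both sides through the $\ell$-adic cohomology of explicit $\FF_q$-varieties attached to the parahoric group schemes, in the style of \cite{CI21-RT}, and then to stratify the variety computing the left-hand side so that its ``diagonal'' strata reproduce the right-hand side while the remaining strata are annihilated by the $(M,G)$-genericity of $\rho$. Because $T$ is a torus, this is the parahoric analogue of the Mackey formula ``with one factor a torus'', whose $r=0$ case is due to Deligne--Lusztig \cite{DL83} (see also \cite{DM20}); the new feature for $r>0$ is that the off-diagonal contributions no longer vanish for free, and the hypothesis --- which, notably, involves only $M$ --- is precisely what is needed to kill them.

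\textbf{Setting up the geometry.} Write $\bbP_r=\bbM_r\ltimes\bbU_r^{\bbP}$ and $\bbB_r=\bbT_r\ltimes\bbU_r^{\bbB}$, and let $\mathbb{Y}_{\bbU_r^{\bbP}}$, $\mathbb{Y}_{\bbU_r^{\bbB}}$ be the parahoric Deligne--Lusztig varieties through whose cohomology $R_{\bbM_r,\bbP_r}^{\bbG_r}$ and ${}^*R_{\bbT_r,\bbB_r}^{\bbG_r}$ are computed, carrying commuting $\bbG_r^\sigma$- and $\bbM_r^\sigma$- (resp.\ $\bbT_r^\sigma$-) actions. Feeding in the general properties of parahoric Lusztig induction from Section~\ref{sec:parahoric lusztig} (especially the adjunction) together with a Künneth argument, one obtains
\[
  {}^*R_{\bbT_r,\bbB_r}^{\bbG_r}\circ R_{\bbM_r,\bbP_r}^{\bbG_r}(\rho)\;=\;\sum_i(-1)^i\,H^i_c(Z)\otimes_{\overline{\QQ}_\ell[\bbM_r^\sigma]}\rho\quad\text{in }\cR(\bbT_r^\sigma),
\]
where $Z:=\bbG_r^\sigma\backslash(\mathbb{Y}_{\bbU_r^{\bbB}}\times\mathbb{Y}_{\bbU_r^{\bbP}})$ carries a $\bbT_r^\sigma\times\bbM_r^\sigma$-action. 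The map $(g\bbU_r^{\bbB},h\bbU_r^{\bbP})\mapsto g^{-1}h$ identifies $Z$ with an explicit subvariety of a quotient of $\bbU_r^{\bbB}\backslash\bbG_r/\bbU_r^{\bbP}$, exactly as when $r=0$.

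\textbf{Stratification and the diagonal strata.} The parahoric Bruhat-type decomposition of $\bbG_r$ relative to $\bbB_r$ and $\bbP_r$ (from the structure theory used in \cite{CI21-RT}) stratifies $Z=\bigsqcup_n Z_n$, with strata indexed by representatives $n$ of the relevant double cosets. When $n\in\cS(\bbT_r,\bbM_r)$, the stratum $Z_n$ fibers $\bbT_r^\sigma\times\bbM_r^\sigma$-equivariantly over the parahoric Deligne--Lusztig variety of the pair $({}^n\bbT_r\subseteq{}^n\bbB_r\cap\bbM_r)$ in $\bbM_r$, with fibers a fixed affine space; thus $\sum_i(-1)^iH^i_c(Z_n)\otimes_{\overline{\QQ}_\ell[\bbM_r^\sigma]}\rho={}^*R_{{}^n\bbT_r,{}^n\bbB_r\cap\bbM_r}^{\bbM_r}(\ad(n^{-1})^*\rho)$ once the affine shift cancels in the alternating sum, and summing over $\bbT_r^\sigma\backslash\cS(\bbT_r,\bbM_r)^\sigma/\bbM_r^\sigma$ yields the right-hand side. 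It remains to show $\sum_i(-1)^iH^i_c(Z_n)\otimes_{\overline{\QQ}_\ell[\bbM_r^\sigma]}\rho=0$ whenever $n\notin\cS(\bbT_r,\bbM_r)$.

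\textbf{The main obstacle: vanishing on the off-diagonal strata.} In the classical case this vanishing is automatic; for $r>0$ the strata with $n\notin\cS(\bbT_r,\bbM_r)$ genuinely contribute, and killing their $\rho$-isotypic part is the crux. Here I would exploit that on such a stratum a nontrivial Moy--Prasad graded subgroup $\bbU'\subseteq\bbM_r$ --- built from root subspaces of $\Lie\bbM$ not centralised by $n$ --- acts on $Z_n$, and that the concentration arguments of \cite{Lus04,Sta09,CI21-RT}, applied layer by layer, force the $\bbM_r^\sigma$-isotypic content of $H^*_c(Z_n)$ to be incompatible with an $(M,G)$-generic $\rho$ (roughly: the cohomology is controlled by a subgroup of $\ker(\bbM_r^\sigma\to\bbM_{r-1}^\sigma)$ on which a generic $\rho$ restricts without the needed constituents), so that $H^*_c(Z_n)\otimes_{\overline{\QQ}_\ell[\bbM_r^\sigma]}\rho=0$. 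The residual ``$r=0$ part'' --- that after this reduction the strata still not meeting $\cS(\bbT_r,\bbM_r)$ cancel --- is precisely Lusztig's original argument for the torus-Mackey formula \cite{DL83}; combining the two is the heart of the proof. A pervasive secondary difficulty is that $\bbG_r$ is not reductive, so every use of Bruhat decomposition, of affine-space fibrations, and of torsor structures must be carried out in the parahoric framework of \cite{CI21-RT}; in particular one has to check that the fibrations above are honest affine bundles (not merely cohomologically trivial), so that the shifts cancel cleanly in $\cR(\bbT_r^\sigma)$.
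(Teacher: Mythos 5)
Your setup (the quotient $Z \cong \Sigma$ of the product of the two varieties by $\bbG_r^\sigma$, stratified by the parahoric Bruhat decomposition relative to $\bbB_r$ and $\bbP_r$) matches the paper's, but the division of labor between the two mechanisms is reversed, and this is not a cosmetic issue --- as written, the argument would prove too much. You claim that for a stratum indexed by a double coset with a $\sigma$-rational representative $n \in \cS(\bbT_r,\bbM_r)^\sigma$, the stratum $Z_n$ is an affine-space fibration over $X_{{}^n\bbT_r,\,{}^n\bbB_r\cap\bbM_r}^{\bbM_r}$, so that these strata produce the right-hand side with no genericity hypothesis, and that genericity is only needed to kill the remaining strata. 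This is false for $r>0$: each stratum $\bbG_{r,w} = \bbB_r \bbK_{w,0+:r+}\dot w\bbP_r$ contains a positive-depth locus (the points with nontrivial $\bbK_{w,0+:r+}$-coordinate), and it is precisely this locus \emph{inside the strata with rational representatives} whose $\rho$-isotypic cohomology must be killed using $(\bfM,\bfG)$-genericity. A sanity check: take $\bfT$ split and $\bfM=\bfT$, $\bfP=\bfB$. Then every Weyl double coset has a $\sigma$-rational representative, so your claim would give the Mackey formula --- hence the scalar product formula --- for \emph{every} character $\theta$, contradicting the failure recorded in the introduction (for $\theta$ factoring through $\bbT_0$ in general position, $R_{\bbT_r,\bbB_r}^{\bbG_r}(\theta)$ is reducible for $r>0$). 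In the paper the split is made differently: within each stratum one separates the $z=1$ locus from the $z\neq 1$ locus ($z$ the $\bbK_{w,0+:r+}$-coordinate); the $z\neq 1$ part is annihilated in $\psi$-isotypic cohomology by a commutator/norm construction producing an action of a connected group built from $\bbT^\alpha_{r:r+}$ for roots $\alpha \in \Phi(\bfG,\bfT)\smallsetminus\Phi(\bfM,\bfT)$ occurring in $\bbK_w$ (Proposition \ref{prop:Sigma'}), and this is the only place genericity enters; the $z=1$ part is then handled by the Deligne--Lusztig fixed-point formula for a torus $\bar H_w^0$, which \emph{both} identifies its Euler characteristic with that of $X_{{}^w\bbT_r,{}^w\bbB_r\cap\bbM_r}^{\bbM_r}$ when $w$ has a $\sigma$-rational representative \emph{and} shows it vanishes when it does not (Proposition \ref{prop:Sigma''}). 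So the strata without rational representatives are disposed of by the connectedness/fixed-point argument, not by genericity.

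Two secondary points. First, your proposed vanishing mechanism for the "off-diagonal" strata (a Moy--Prasad subgroup built from root subspaces of $\Lie\bfM$ not centralized by $n$) does not match what genericity can see: $(\bfM,\bfG)$-genericity of $\rho|_{\bbM_{r:r+}^\sigma}$ is a nontriviality condition along norms of coroot images $\alpha^\vee$ for $\alpha$ \emph{outside} $\Phi(\bfM,\bfT)$, so the group you act by must be manufactured from such roots (in the paper, from the roots contributing to $\bbK_w = \bbU^-\cap \dot w\bbN^-\dot w^{-1}$, transported into $\bbM_{r:r+}$ by a commutator with the $z$-coordinate), not from roots of $\bfM$. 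Second, even on the $z=1$ locus one should not expect an honest affine-bundle structure over the $\bbM_r$-Lusztig variety; the paper only obtains equality of Euler characteristics via the fixed-point formula, and for the purposes of the alternating sum that is all one needs. With the roles of the two mechanisms corrected, your outline becomes the paper's proof.
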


The overarching idea of this paper is the Scalar Product Conjecture can be resolved by iteratively leveraging generic Mackey formulas. This target iteration directs us to the problem of describing $R_{\bbT_r,\bbB_r}^{\bbG_r}(\theta)$ as the composition of parahoric Lusztig inductions. This relies on Kaletha's work \cite{Kal19} generalizing Howe's $\GL_n$ work \cite{How77} factorizing characters of maximal tori: given a character $\theta$ of $\bbT_r^\sigma$, Kaletha proves that one can write down a sequence of characters $\phi_{-1}, \phi_0, \ldots, \phi_d$ of increasing depth, where each $\phi_i$ is a character of $(\bbG_r^i)^\sigma$ for an increasing sequence of Levi subgroups $G^{-1} = T \subseteq G^0 \subset \cdots \subset G^d = G$. (The characters $\phi_i$ are not uniquely determined by $\theta$, but their depths and the associated Levi subgroups $G^i$ are.) Given a Howe factorization $\vec \phi$ of a character $\theta$ of $\bbT_r^\sigma$, we may define a virtual $\bbG_r^\sigma$-representation $r_{\bbT_r,\bbB_r}^{\bbG_r}(\vec \phi; \vec P)$ of $\bbG_r^\sigma$ obtained by inflating to a larger depth, tensoring by a generic character, and applying parahoric Lusztig induction. We prove (Proposition \ref{prop:Howe}) that if $(\theta,\bbT_r,\bbB_r)$ is split-generic, then $R_{\bbT_r,\bbB_r}^{\bbG_r}(\theta) \cong r_{\bbT_r,\bbB_r}^{\bbG_r}(\vec \phi)$. The reason this isomorphism holds is due to the following theorem:

\begin{displaytheorem}\label{thm:display depth}
  Assume $\bfT$ is elliptic. If $\theta \from \bbT_r^\sigma \to \overline \QQ_\ell^\times$ is a character which factors through $\bbT_s^\sigma$ for some $s < r$, then we have an isomorphism of virtual $\bbG_r^\sigma$-representations
  \begin{equation*}
    R_{\bbT_r,\bbB_r}^{\bbG_r}(\theta) \cong R_{\bbT_s,\bbB_s}^{\bbG_s}(\theta).
  \end{equation*}
\end{displaytheorem}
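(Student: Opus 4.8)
The plan is to compute both sides against a suitable family of test functions on $\bbG_r^\sigma$ and reduce the statement to a character-theoretic identity. Recall that in the parahoric setting of \cite{CI21-RT}, the virtual representation $R_{\bbT_r,\bbB_r}^{\bbG_r}(\theta)$ is realized as the (virtual) $\overline\QQ_\ell$-vector space $\sum_i (-1)^i H^i_c(X_{\bbB_r})_\theta$, where $X_{\bbB_r} = \{g \in \bbG_r : g^{-1}\sigma(g) \in \bbU_r\}$ is the parahoric Deligne--Lusztig variety and $\bbU_r$ is the unipotent radical of $\bbB_r$. First I would set up the two natural projection maps $\pi \from \bbG_r \twoheadrightarrow \bbG_s$ and $\pi \from \bbT_r \twoheadrightarrow \bbT_s$ (reduction of Moy--Prasad level), which are compatible with $\sigma$ and carry $\bbB_r$ onto $\bbB_s$, $\bbU_r$ onto $\bbU_s$. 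The map $\pi$ restricts to a surjection $X_{\bbB_r} \to X_{\bbB_s}$, and the hypothesis that $\theta$ factors through $\bbT_s^\sigma$ means the $\theta$-isotypic projector only sees the $\bbT_s^\sigma$-action; thus the claimed identity should follow once one understands the fibers of $X_{\bbB_r} \to X_{\bbB_s}$ as varieties with $\bbG_s^\sigma \times \bbT_s^\sigma$-action.

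The key step is therefore a \emph{fibration argument}: I would show that $X_{\bbB_r} \to X_{\bbB_s}$ is (after suitable base change or stratification) an iterated affine-space bundle, equivariantly for the relevant group actions, so that the Leray spectral sequence degenerates and $H^*_c(X_{\bbB_r}) \cong H^{*}_c(X_{\bbB_s})$ up to a Tate twist and shift that cancels in the virtual (Euler-characteristic) representation. Concretely, the fiber over a point is governed by the kernel $\bbU_r^{(s)} := \ker(\bbU_r \to \bbU_s)$ together with the equation $g^{-1}\sigma(g)\in \bbU_r$ modulo level $s$; the ellipticity of $\bfT$ is what guarantees that the relevant Lang-type map on the kernel torus $\ker(\bbT_r \to \bbT_s)$ (and on the corresponding unipotent graded pieces) is surjective with connected, hence affine-space, fibers---this is the standard mechanism by which depth can be lowered, analogous to the depth-reduction lemmas in \cite{Lus04,Sta09} and to Proposition~\ref{prop:twisting}. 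Because $\theta$ is trivial on $\ker(\bbT_r^\sigma \to \bbT_s^\sigma)$, taking $\theta$-isotypic components commutes with pushing forward along $\pi$, and one obtains $R_{\bbT_r,\bbB_r}^{\bbG_r}(\theta)\big|_{\bbG_s^\sigma} \cong R_{\bbT_s,\bbB_s}^{\bbG_s}(\theta)$; the full $\bbG_r^\sigma$-equivariant statement follows because the $\bbG_r^\sigma$-action on $H^*_c(X_{\bbB_r})_\theta$ factors through $\bbG_s^\sigma$—indeed the generic part of the cohomology in positive depth is detected at level $s$ precisely when $\theta$ has depth $\le s$ (a fact one also extracts from the fibration, since the higher-level directions contribute only trivial $\bbG_r^\sigma$-modules).

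The main obstacle I anticipate is verifying the equivariant triviality of the fibration—i.e., that the fibers of $X_{\bbB_r}\to X_{\bbB_s}$ are genuinely affine spaces \emph{compatibly with the full $\bbG_r^\sigma\times\bbT_r^\sigma$-action}, not merely abstractly. The subtlety is that $\sigma$ acts non-trivially on the graded pieces $\mathrm{gr}\,\bbU_r$ and $\mathrm{gr}\,\bbT_r$, so ``affine space'' must be understood as a torsor under a vector group on which $\sigma$ acts linearly, and one must check that the associated additive Lang map $x \mapsto x - \sigma(x)$ (twisted appropriately by the base point in $X_{\bbB_s}$) is surjective with vector-group fibers. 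This is where ellipticity of $\bfT$ enters decisively: it rules out the split-torus pathology flagged in the introduction (where lowering $r$ genuinely changes the representation) by ensuring no graded piece of $\bbT_r$ carries an $\sigma$-fixed line that would obstruct surjectivity. Once this linear-algebra-over-$\mathrm{gr}$ input is in place, the spectral-sequence bookkeeping and the passage to $\theta$-isotypic virtual representations are routine, following the template of \cite[\S\S]{CI21-RT}.
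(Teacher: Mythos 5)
Your proposal identifies the right reduction (compare fibers of $X_{\bbT_r,\bbB_r}^{\bbG_r}\to X_{\bbT_s,\bbB_s}^{\bbG_s}$, match the Tate twist and shift against the Euler characteristic, and use triviality of $\theta$ on $\ker(\bbT_r^\sigma\to\bbT_s^\sigma)$), but its load-bearing step is precisely the one the paper could not prove and deliberately avoids. You assert that the projection is, equivariantly for $\bbG_r^\sigma\times\bbT_r^\sigma$, an iterated affine-space bundle, and that this follows from ellipticity via surjectivity of Lang-type maps on the graded pieces. Surjectivity of Lang maps on connected groups over $\overline\FF_q$ is automatic and is not where ellipticity enters; connectedness of fibers of a Lang-type map does not make them affine spaces, let alone equivariantly trivialized ones. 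The paper states explicitly that the fibers are known to be (disjoint unions of) affine spaces only for inner forms of $\GL_n$, that for general $\bfG$ this could not be established for arbitrary base points $x$ despite sustained effort, and that the theorem is instead deduced from the strictly weaker cohomological statement (Theorem \ref{thm:fiber cohomology}): after taking $\bbT_{r:r+}^\sigma$-invariants, each fiber has the cohomology of $\#\bbU_{r:r+}^\sigma$ copies of $\bbA^N$ concentrated in degree $2N$. Your heuristic that ellipticity ``rules out $\sigma$-fixed lines in the graded pieces of $\bbT_r$'' is also off: $\bbT_{r:r+}^\sigma$ is typically a nontrivial finite group for elliptic unramified $\bfT$; what ellipticity actually gives is $\bbU_{r:r+}^\sigma=\{1\}$ together with the counting input described below.

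The missing content is the following. One identifies $\tilde\pi^{-1}(x)$ with $\{x_r\in\bbG_{r:r+}: x_r^{-1}\sigma(x_r)\in x\bbU_{r:r+}x^{-1}\}$, pushes the constant sheaf forward along the restricted Lang map to $x\bbU_{r:r+}x^{-1}$, and decomposes it into multiplicative local systems $i^*\cL_\chi$ indexed by characters $\chi$ of $\bbG_{r:r+}^\sigma$; only those $\chi$ with $i^*\cL_\chi$ trivial contribute, in degree $2N$. The number of contributing characters is the index $[\bbG_{r:r+}^\sigma:(x\bbT_{r:r+}x^{-1})^\sigma\cdot\Nm(x\bbU_{r:r+}^{\sigma^m}x^{-1})]$, and the crux (Proposition \ref{prop:|A| elliptic}) is a delicate root-theoretic dimension count---using, for each relevant $\beta$, a root $\delta$ with $\sigma^i(\delta)=-\beta$, which is exactly where ellipticity is used---showing these two subgroups are complementary in $\bbG_{r:r+}^\sigma$, so the index is $1$. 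Both subgroups genuinely depend on $x$ through $\bar x^{-1}\sigma(\bar x)=\prod_\alpha u_\alpha$, which is why no uniform equivariant affine-bundle trivialization is available and why the argument must proceed fiberwise through characters rather than through a Leray degeneration for a bundle. Without this computation (or an honest proof of your equivariant fibration claim, which would be a new result going beyond the paper), the ``routine spectral-sequence bookkeeping'' has no input, and the claim that the $\bbG_r^\sigma$-action on the $\theta$-isotypic part factors through $\bbG_s^\sigma$ is itself a consequence of the fiber-cohomology statement rather than something you may assume.
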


Theorem \ref{thm:display depth} follows from Theorem \ref{thm:fiber cohomology}, the true crux of this paper. Consider the $\overline \FF_q$-schemes $X_{\bbT_r,\bbB_r}^{\bbG_r}$ defining the functor $R_{\bbT_r,\bbB_r}^{\bbG_r}$. Theorem \ref{thm:fiber cohomology} calculates the cohomology of the fibers of
\begin{equation}\label{eq:r to r-1}
  X_{\bbT_r,\bbB_r}^{\bbG_r} \to X_{\bbT_{r-1},\bbB_{r-1}}^{\bbG_r}.
\end{equation}
This allows us to obtain (see  Corollary \ref{cor:infinite level}) a definition of $\ell$-adic homology groups for the infinite-depth parahoric Deligne--Lusztig variety $X_{\bbT_\infty,\bbB_\infty}^{\bbG_\infty}$. This for example endows any sufficiently well understood  $p$-adic Deligne--Lusztig space (conjectured by Lusztig \cite{Lus79} and studied by Ivanov \cite{I23arc,I23orbit} in Coxeter cases) with $\ell$-adic homology groups and shows that they encode the same representations as $X_{\bbT_\infty,\bbB_\infty}^{\bbG_\infty}$ (see Remarks \ref{rem:relation to lusztig} and \ref{rem:indirect depth compat} for more comments in this direction). Prior to this paper, Theorem \ref{thm:display depth} was known only in the setting that $G$ is an inner form of $\GL_n$ \cite{Lus79,Boy12,CI21-MA}, and in these cases, the fibers of \eqref{eq:fiber cohomology} are disjoint unions of a fixed affine space $\bbA^N$. For general $G$, while it is conceivable that this also happens, to establish Theorem \ref{thm:display depth} we prove the weaker statement that the fibers of \eqref{eq:r to r-1} share the same \textit{cohomology} as disjoint unions of $\bbA^N$ (Theorem \ref{thm:fiber cohomology}).  

After establishing Theorems \ref{thm:display mackey} and \ref{thm:display depth}, the Main Theorem is simple to prove: using the isomorphism $R_{\bbT_r,\bbB_r}^{\bbG_r}(\theta) \cong r_{\bbT_r}^{\bbG_r}(\vec \phi)$ from Proposition \ref{prop:Howe} (which depends on Theorem \ref{thm:display depth}), it is equivalent to calculate the inner product
\begin{equation*}
  \langle r_{\bbT_r}^{\bbG_r}(\vec \phi), R_{\bbT_r',\bbB_r'}^{\bbG_r}(\theta') \rangle,
\end{equation*}
which we do by applying the generic Mackey formula (Theorem \ref{thm:display mackey}) to successively peel off layers in the Howe factorization. This is the content of Section \ref{sec:scalar}.

The techniques in this paper have direct analogues for the functors arising from the Drinfeld stratification $X_{\bbT_r,\bbB_r}^{\bbL_r\bbG_r^+}$ of parahoric Deligne--Lusztig varieties (see Definition \ref{def:drinfeld}) in the sense of \cite{CI21-SM}. We explain the minor modifications required to do this and  establish the Scalar Product Conjecture for Drinfeld strata in Section \ref{sec:drinfeld}.


Allow us to mention an immediate application of the results of this paper. Assume that $T$ is elliptic, the setting of the Corollary above and of Theorem \ref{thm:display depth}. In forthcoming work with M.\ Oi, under a largeness condition on $q$, we determine $R_{\bbT_\infty,\bbB_\infty}^{\bbG_\infty}(\theta)$ in terms of Yu's construction \cite{Yu01} of tame supercuspidal representations. In particular, we can then describe $R_{\bbT_\infty,\bbB_\infty}^{\bbG_\infty}(\theta)$ in terms of Kaletha's local Langlands correspondence for regular supercuspidal representations \cite{Kal19}, thereby removing the torality assumption required in our previous work \cite{CO21}. Our methods vitally depend on the scalar product formula. 

\subsection*{Acknowledgments} The author sincerely thanks Alexander Ivanov for pointing out an error in the first version of this paper. She also thanks Eva Viehmann and her learning seminar for careful comments and suggestions. Additionally, she thanks Jessica Fintzen, Tasho Kaletha, Masao Oi, and Xinwen Zhu for discussions on an earlier draft. The author was partially supported by NSF grants DMS-2101837, DMS-2401114, and a Sloan Research Fellowship.



\setcounter{tocdepth}{1}

\tableofcontents

\newpage

\section{Notation}\label{sec:notation}

Let $F$ be a non-archimedean local field and let $\breve F$ denote the completion of the maximal unramified extension of $F$. We write $\cO_F$ and $\breve \cO$ for the ring of integers of $F$ and $\breve F$. Write $k_F \cong \bbF_q$ and $k = \overline \bbF_q$ for the residue fields of $F$ and $\breve F$; note that $k$ is an algebraic closure of $k_F$. Choose a uniformizer $\varpi$ of $F$. For any finite group $\G$, we write $\cR(\G)$ for the representation ring of $\G$, with coefficients in $\overline \QQ_\ell$ for $\ell \neq p = \Char(k_F)$.

Let $\bfG$ be a connected reductive group over $\breve F$ and $\bfT \hookrightarrow \bfG$ a split torus. We denote by $\Phi(\bfG,\bfT)$ its corresponding root system. Choose a point $\x$ in the apartment of $\bfT$ and fix a positive integer $r > 0$. By Bruhat--Tits theory and a construction of Yu \cite{Yu15}, we have an associated smooth affine $\breve \cO$-model $\cG_{\x,r}$ of $\bfG$ such that $\cG_{\x,r}(\breve \cO)$ is the $r$th Moy--Prasad filtration subgroup \cite{MP94,MP96} of the parahoric group subgroup $\cG_{\x,0}(\breve \cO) \subset G(\breve F)$. Following \cite[Section 2.5]{CI21-RT}, we consider the perfectly of finite type smooth affine group scheme $\bbG_{s:r+}$ representing the perfection of the functor
\begin{equation}
  R \mapsto \cG_{\x,s}(\bbW(R))/\cG_{\x,r+}(\bbW(R)),
\end{equation}
where $R$ is any $k$-algebra. Here, $\bbW$ denotes the Witt ring associated to $F$ if $F$ has characteristic $0$ and $\bbW(R) = R[\![\varpi]\!]$ if $F$ has positive characteristic. As in \cite[Section 2.6]{CI21-RT}, associated to any closed subgroup scheme $\bfH$ of $\bfG$, we have an associated closed subgroup scheme $\bbH_{s:r+}$ of $\bbG_{s:r+}$. Abusing notation, we define
\begin{equation*}
  \bbG_r \colonequals \bbG_{0:r+}.
\end{equation*}
We denote by $W_{\bbG_r}(\bbT_r)$ the absolute Weyl group of $\bbG_r$. 

Throughout this paper, we assume that $\bfG$ and $\bfT$ each arise as the base-change of a connected reductive group $G$ and a torus $T$ defined over $F$. Let $E$ be the splitting field of $T$. We have associated Frobenius endomorphisms $\sigma \from \bfG \to \bfG$ and $\sigma \from \bbG_r \to \bbG_r$ stabilizing $\bfT$ and $\bbT_r$ respectively. We use the superscript ${}^\sigma$ to denote the $\sigma$-fixed points, so that for example $\bbG_r^\sigma$ is a quotient of a parahoric subgroup of $G(F)$ and $\bbT_r^\sigma$ is a subquotient of $T(F)$. If $T$ satisfies some property, we will say that $\bfT$ satisfies that property \textit{over $F$}.

\section{Parahoric Lusztig induction}\label{sec:parahoric lusztig}

\subsection{Definitions}

Completely analogously to parahoric Deligne--Lusztig induction as defined in \cite{Lus04,Sta09,CI21-RT}, we may define parahoric Lusztig induction.

\begin{definition}
  Let $\bfM$ be a $F$-rational Levi subgroup of $\bfG$ containing $\bfT$ and let $\bfP$ be a parabolic subgroup of $\bfG$ with Levi component $\bfM$. Let $\bfN$ denote the unipotent radical of $\bfP$. Define the parahoric Lusztig variety to be
  \begin{equation*}
    X_{\bbM_r, \bbP_r}^{\bbG_r} \colonequals \{x \in \bbG_r : x^{-1}\sigma(x) \in \sigma(\bbN_r)\}.
  \end{equation*}
  Note that this has a natural left action of $\bbG_r^\sigma$ and a natural right action of $\bbM_r^\sigma$ given by
  \begin{equation*}
    (g,m) \from x \mapsto gxm.
  \end{equation*}
  We point out to the reader that we may not have $\sigma(\bbN_r) = \bbN_r$. Let $n$ be a positive integer such that $\sigma^n(\bbN_r) = \bbN_r$; then $X_{\bbM_r,\bbP_r}^{\bbG_r}$ is defined over $\FF_{q^n}$.
\end{definition}

\begin{definition}\label{def:induction}
  We define the functor
  \begin{equation*}
    R_{\bbM_r, \bbP_r}^{\bbG_r} \from \cR(\bbM_r^\sigma) \to \cR(\bbG_r^\sigma)
  \end{equation*}
  by the formula
  \begin{equation*}
    R_{\bbM_r,\bbP_r}^{\bbG_r}(\chi)(g) = \frac{1}{|\bbM_r^\sigma|} \sum_{m \in \bbM_r^\sigma} \tr((g,m) ; H_c^*(X_{\bbM_r, \bbP_r}^{\bbG_r}, \overline \QQ_\ell)) \cdot \overline{\chi(m)}.
  \end{equation*}
  The adjoint functor 
  \begin{equation*}
    {}^* R_{\bbM_r,\bbP_r}^{\bbG_r} \from \cR(\bbG_r^\sigma) \to \cR(\bbM_r^\sigma)
  \end{equation*}
  is given by the formula
  \begin{equation*}
    {}^* R_{\bbM_r,\bbP_r}^{\bbG_r}(\psi)(m) = \frac{1}{|\bbG_r^\sigma|} \sum_{g \in \bbG_r^\sigma} \tr((g,m); H_c^*(X_{\bbM_r,\bbP_r}^{\bbG_r}, \overline \QQ_\ell)) \cdot \overline{\psi(g)}.
  \end{equation*}
\end{definition}

\subsection{Properties}

We present several natural properties of parahoric Lusztig functors.

\begin{proposition}[transitivity]\label{prop:transitivity}
  Let $\bfQ \subset \bfP$ be two parabolic subgroups of $\bfG$ and let $\bfL \subset \bfM$ be $F$-rational Levi subgroups of $\bfQ$ and $\bfP$ respectively. Then
  \begin{equation*}
    R_{\bbM_r, \bbP_r}^{\bbG_r} \circ R_{\bbL_r, \bbM_r \cap \bbQ_r}^{\bbM_r} = R_{\bbL_r, \bbQ_r}^{\bbG_r}.
  \end{equation*}
\end{proposition}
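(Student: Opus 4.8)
The plan is to imitate the proof of transitivity of Lusztig induction in the classical $r=0$ setting, which proceeds by exhibiting a morphism between the relevant varieties whose fibers are affine spaces, and then transporting this through the trace-of-Frobenius formula. Concretely, I would relate $X_{\bbL_r,\bbQ_r}^{\bbG_r}$ to $X_{\bbM_r,\bbP_r}^{\bbG_r}$ and $X_{\bbL_r,\bbM_r\cap\bbQ_r}^{\bbM_r}$ via a geometric construction and then compute cohomology.

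\textbf{Step 1: Set up the varieties.} Write $\bfN$ for the unipotent radical of $\bfP$ and $\bfN'$ for that of $\bfQ$; since $\bfQ \subset \bfP$ with Levi $\bfL \subset \bfM$, we have $\bfN \subset \bfN'$ and $\bfN' = \bfN \cdot (\bfN' \cap \bfM)$, where $\bfN' \cap \bfM$ is the unipotent radical of the parabolic $\bfM\cap\bfQ$ of $\bfM$. After passing to Moy--Prasad quotients, the analogous decomposition $\bbN_r' = \bbN_r \cdot (\bbN_r' \cap \bbM_r)$ holds (this is the kind of compatibility of subgroup schemes with the $\bbH \mapsto \bbH_{s:r+}$ construction recorded in Section~\ref{sec:notation} following \cite[Section 2.6]{CI21-RT}; I would cite it and check the product map is an isomorphism of schemes using the filtration structure). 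Then $X_{\bbL_r,\bbQ_r}^{\bbG_r} = \{x \in \bbG_r : x^{-1}\sigma(x) \in \sigma(\bbN_r')\}$.

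\textbf{Step 2: Construct the intermediate fibration.} Following the $r=0$ template (e.g.\ \cite{DM20}), consider the variety
\begin{equation*}
  \widetilde{X} \colonequals \{(x, y) \in \bbG_r \times \bbG_r : x^{-1}\sigma(x) \in \sigma(\bbN_r),\ y^{-1}x \in \bbM_r,\ y^{-1}\sigma(y) \in \sigma(\bbN_r'\cap\bbM_r)\}.
\end{equation*}
Projection to $x$ realizes $\widetilde{X}$ over $X_{\bbM_r,\bbP_r}^{\bbG_r}$, and the $\bbM_r^\sigma$-torsor structure on $X_{\bbM_r,\bbP_r}^{\bbG_r}$ (via right multiplication on the $\bbM_r^\sigma$-factor appearing in $x^{-1}\sigma(x)$) interacts with the condition $y^{-1}x\in\bbM_r$; the second projection, after the coordinate change $z = y$, should identify the relevant quotient with $X_{\bbL_r,\bbM_r\cap\bbQ_r}^{\bbM_r}$. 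The key geometric input is that the map $\widetilde{X} \to X_{\bbL_r,\bbQ_r}^{\bbG_r}$ sending $(x,y)\mapsto y$ (using $\bbN_r' = \bbN_r\cdot(\bbN_r'\cap\bbM_r)$ to check $y^{-1}\sigma(y)\in\sigma(\bbN_r')$) has fibers isomorphic to an affine space — in $r=0$ this fiber is $\bbN\cap\sigma(\bbN)$-like and contractible, and the same computation should go through in the parahoric setting because the Moy--Prasad quotient $\bbN_r$ is itself (perfectly) isomorphic as a variety to an affine space, and the Lang-type fibers of unipotent groups have trivial compactly-supported cohomology except in top degree where it is one-dimensional.

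\textbf{Step 3: Conclude via cohomology.} Given Step 2, the Leray spectral sequence (or a direct Künneth/acyclic-fibration argument) gives an $\bbG_r^\sigma\times\bbL_r^\sigma$-equivariant isomorphism $H_c^*(X_{\bbL_r,\bbQ_r}^{\bbG_r}) \cong H_c^{*-2d}(\widetilde{X})(-d)$ for the appropriate affine-space dimension $d$, and similarly $H_c^*(\widetilde{X})$ is built from $H_c^*(X_{\bbM_r,\bbP_r}^{\bbG_r})$ and $H_c^*(X_{\bbL_r,\bbM_r\cap\bbQ_r}^{\bbM_r})$ compatibly with the two-sided group actions. Feeding these identifications into the trace formula in Definition~\ref{def:induction} and using that the functor $R_{\bbM_r,\bbP_r}^{\bbG_r}$ is defined by averaging the trace against a character, one gets $R_{\bbM_r,\bbP_r}^{\bbG_r}\circ R_{\bbL_r,\bbM_r\cap\bbQ_r}^{\bbM_r} = R_{\bbL_r,\bbQ_r}^{\bbG_r}$ as functors on Grothendieck rings; the affine-space shifts contribute only Tate twists, which are invisible at the level of virtual characters.

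\textbf{Main obstacle.} The one genuinely nontrivial point is verifying that the fibers of the morphism in Step~2 really are affine spaces (equivalently, have the cohomology of a point up to shift) in the parahoric setting $r>0$, rather than merely in $r=0$. The subtlety is that $\sigma$ need not preserve $\bbN_r$, so the relevant fiber is cut out by a "twisted" condition involving both $\bbN_r$ and $\sigma(\bbN_r)$, and one must check that the Moy--Prasad filtration structure on $\bbN_r$ makes this fiber a successive extension of affine spaces (by iterating over the graded pieces $\bbN_{i:i+}$ and using that $\sigma$ acts linearly on each, so the twisted condition is an affine-linear one at each stage). This is exactly the sort of filtration bookkeeping carried out in \cite{CI21-RT}, so I would model the argument on the analogous lemmas there; everything else is formal manipulation of the trace formula.
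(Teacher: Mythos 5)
There is a genuine gap, and it sits exactly where you placed your confidence: in the definition of the intermediate variety and in what you identify as the ``main obstacle.'' As written, your
\begin{equation*}
  \widetilde{X} = \{(x,y) \in \bbG_r \times \bbG_r : x^{-1}\sigma(x) \in \sigma(\bbN_r),\ y^{-1}x \in \bbM_r,\ y^{-1}\sigma(y) \in \sigma(\bbN_r'\cap\bbM_r)\}
\end{equation*}
is not the right object: writing $m = y^{-1}x \in \bbM_r$, the third condition gives $y^{-1}\sigma(y) \in \bbM_r$, hence $x^{-1}\sigma(x) = m^{-1}\bigl(y^{-1}\sigma(y)\bigr)\sigma(m) \in \bbM_r$, and since $\bfM$ is $F$-rational we have $\bbM_r \cap \sigma(\bbN_r) = \{1\}$, so the first condition forces $x^{-1}\sigma(x) = 1$. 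Thus the first projection of $\widetilde{X}$ lands in $\bbG_r^\sigma$ rather than in all of $X_{\bbM_r,\bbP_r}^{\bbG_r}$, and the whole construction collapses. The twisting condition must be imposed on $m = x^{-1}y$ itself (namely $m^{-1}\sigma(m) \in \sigma(\bbV_r\cap\bbM_r)$, with $\bbV = \bbN'$ the unipotent radical of $\bfQ$); then one checks that $y = xm$ automatically satisfies $y^{-1}\sigma(y) = (m^{-1}x^{-1}\sigma(x)m)\,m^{-1}\sigma(m) \in \sigma(\bbN_r)\sigma(\bbV_r\cap\bbM_r) \subset \sigma(\bbV_r)$, using that $\bbM_r$ normalizes $\sigma(\bbN_r)$.

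Moreover, once the object is set up correctly, your ``main obstacle'' (showing fibers are affine spaces via Moy--Prasad filtration bookkeeping) is a red herring: the paper proves that the multiplication map
\begin{equation*}
  X_{\bbM_r,\bbP_r}^{\bbG_r} \times_{\bbM_r^\sigma} X_{\bbL_r,\bbQ_r\cap\bbM_r}^{\bbM_r} \longrightarrow X_{\bbL_r,\bbQ_r}^{\bbG_r}
\end{equation*}
is a $(\bbG_r^\sigma\times\bbL_r^\sigma)$-equivariant \emph{isomorphism}, so no affine fibration, Leray spectral sequence, or Tate-twist accounting is needed. The actual work, which your proposal never addresses, is: (i) surjectivity, which uses the factorization $\sigma(\bbV_r) = (\sigma(\bbV_r)\cap\bbM_r)(\sigma(\bbV_r)\cap\sigma(\bbN_r))$ to write $y^{-1}\sigma(y) = mn$ and then surjectivity of the Lang map on $\bbM_r$ to produce $m_0$ with $m_0^{-1}\sigma(m_0) = m$, so that $g_0 = ym_0^{-1}$ lies in $X_{\bbM_r,\bbP_r}^{\bbG_r}$; and (ii) injectivity modulo $\bbM_r^\sigma$, which comes down to showing that $\gamma = g'^{-1}g \in \bbM_r$ satisfies $\gamma^{-1}\sigma(\gamma) \in \sigma(\bbN_r)\cap\bbM_r = \{1\}$. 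Your Step 1 (the decomposition $\bbN_r' = \bbN_r(\bbN_r'\cap\bbM_r)$) is correct and is indeed an ingredient, but without the corrected fiber product and the Lang-surjectivity/injectivity arguments the proposal does not yield the proposition.
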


\begin{proof}
  We have Levi decompositions  $\bfQ = \bfL \ltimes \bfV$ and $\bfP = \bfM \ltimes \bfN$ which induces a Levi decomposition $\bfQ \cap \bfM = \bfL \ltimes (\bfV \cap \bfM)$ of the parabolic $\bfQ \cap \bfM$ in $\bfM$. We may consider the three functors $R_{\bbM_r,\bbP_r}^{\bbG_r},R_{\bbL_r, \bbQ_r \cap \bbM_r}^{\bbM_r}, R_{\bbL_r,\bbQ_r}^{\bbG_r}$. We would like to show that there is an isomorphism
  \begin{equation*}
    H_c^*(X_{\bbM_r,\bbP_r}^{\bbG_r}, \overline \QQ_\ell) \otimes_{\overline \QQ_\ell[\bbM_r^\sigma]} H_c^*(X_{\bbL_r, \bbQ_r \cap \bbM_r}^{\bbM_r}, \overline \QQ_\ell) \cong H_c^*(X_{\bbL_r, \bbQ_r}^{\bbG_r}, \overline \QQ_\ell).
  \end{equation*}
  To do this, we will prove that we have a $(\bbG_r^\sigma \times \bbL_r^\sigma)$-equivariant isomorphism of varieties
  \begin{equation*}
    X_{\bbM_r,\bbP_r}^{\bbG_r} \times_{\bbM_r^\sigma} X_{\bbL_r,\bbQ_r \cap \bbM_r}^{\bbM_r} \to X_{\bbL_r,\bbQ_r}^{\bbG_r}
  \end{equation*}
  given by restricting the multiplication map $\bbG_r \times \bbM_r \to \bbG_r$. 
  To see surjectivity, choose any $y \in X_{\bbL_r,\bbQ_r}^{\bbG_r}$ and first observe that $\sigma(\bbV_r) = (\sigma(\bbV_r) \cap \bbM_r)(\sigma(\bbV_r) \cap \sigma(\bbN_r))$. Hence we may write $y^{-1} \sigma(y) = mn$. Surjectivity of the Lang map implies that we may choose $m_0 \in \bbM_r$ such that $m_0^{-1} \sigma(m_0) = m$; note that $m_0 \in X_{\bbL_r,\bbQ_r \cap \bbM_r}^{\bbM_r}$. Then $g_0 \colonequals ym_0^{-1}$ has the property $g_0^{-1}\sigma(g_0) = m_0 y^{-1} \sigma(y) \sigma(m_0)^{-1} = \sigma(m_0) n \sigma(m_0)^{-1} \in \sigma(m_0) \sigma(\bbN_r) \sigma(m_0)^{-1} = \sigma(\bbN_r)$; in other words, $g_0 \in X_{\bbM_r,\bbP_r}^{\bbG_r}$. Hence we see that $y$ has a preimage $(g_0,m_0)$. To see injectivity, it suffices to prove that if $g,g' \in \bbG_r$ satisfy $g^{-1}\sigma(g), g'{}^{-1} \sigma(g') \in \sigma(\bbN_r)$ and $m,m' \in \bbM_r$ satisfy $m^{-1}\sigma(m), m'{}^{-1}\sigma(m') \in \sigma(\bbV_r) \cap \bbM_r$ and are such that $gm = g'm'$, then $g \in g'\bbM_r^\sigma$. Let $\gamma \colonequals g'{}^{-1} g = m^{-1} m' \in \bbM_r$. Then setting $g^{-1}\sigma(g) = n$ and $g'{}^{-1}\sigma(g') = n'$, we have $\sigma(\gamma) = \sigma(g'{}^{-1}g) = (g'n')^{-1}(gn) = n'{}^{-1} \gamma n$. Therefore $\gamma^{-1} \sigma(\gamma) = \gamma^{-1} n'{}^{-1} \gamma n \in \sigma(\bbN_r)$ (since $\bbM_r$ normalizes $\sigma(\bbN_r)$) and therefore $\gamma^{-1} \sigma(\gamma) \in 
  \sigma(\bbN_r) \cap \bbM_r = \{1\}$. This proves that we have an isomorphism on points and the proposition follows.
\end{proof}


Proposition \ref{prop:transitivity} has the following special case:

\begin{lemma}\label{lem:pInd factor}
  Let $\bfV$ be the largest $\sigma$-stable subgroup scheme of $\bfN$; then  $\bfV$ is the unipotent radical of a $F$-rational parabolic $\bfQ$ with Levi component $\bfL$. Then
  \begin{equation*}
    R_{\bbM_r,\bbP_r}^{\bbG_r} = \Ind_{\bbQ_r^\sigma}^{\bbG_r^\sigma} \circ \Inf_{\bbL_r^\sigma}^{\bbQ_r^\sigma} \circ R_{\bbM_r, \bbL_r \cap \bbP_r}^{\bbL_r}.
  \end{equation*}
\end{lemma}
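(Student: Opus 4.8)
The plan is to deduce this from the transitivity of parahoric Lusztig induction (Proposition~\ref{prop:transitivity}) together with the fact that parahoric Lusztig induction along a $\sigma$-stable parabolic subgroup is just inflation followed by honest parabolic induction of finite groups. I begin with the structure-theoretic setup. The $\sigma$-stable parabolic subgroups of $\bfG$ containing $\bfP$ are closed under intersection and include $\bfG$, so there is a smallest one; this is the $F$-rational parabolic $\bfQ$, its unipotent radical is the largest $\sigma$-stable subgroup scheme $\bfV$ of $\bfN$, and its Levi factor $\bfL$ contains $\bfM$. Hence $\bfP\subseteq\bfQ$, so that $\bfP=(\bfL\cap\bfP)\ltimes\bfV$ and $\bfL\cap\bfP$ is an $F$-rational parabolic subgroup of $\bfL$ with Levi component $\bfM$. (I would treat these facts as standard structure theory for reductive groups over $\breve F$ with a Frobenius; they are essentially the content of the first clause of the statement.)

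Applying Proposition~\ref{prop:transitivity} to the chain $\bfM\subseteq\bfL\subseteq\bfG$ --- taking, in the notation there, the pair $(\bfP,\bfM)$ to be our $(\bfQ,\bfL)$ and the pair $(\bfQ,\bfL)$ to be our $(\bfP,\bfM)$, which is legitimate since $\bfP\subseteq\bfQ$, both $\bfL$ and $\bfM$ are $F$-rational, and the parabolic ``$\bfQ\cap\bfM$'' of that proposition is our $\bfL\cap\bfP$ --- yields
\begin{equation*}
  R_{\bbM_r,\bbP_r}^{\bbG_r}=R_{\bbL_r,\bbQ_r}^{\bbG_r}\circ R_{\bbM_r,\bbL_r\cap\bbP_r}^{\bbL_r}.
\end{equation*}
So it remains to identify $R_{\bbL_r,\bbQ_r}^{\bbG_r}$ with $\Ind_{\bbQ_r^\sigma}^{\bbG_r^\sigma}\circ\Inf_{\bbL_r^\sigma}^{\bbQ_r^\sigma}$, which I would do exactly as in the classical case $r=0$.

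Since $\bfV$ is $\sigma$-stable we have $\sigma(\bbV_r)=\bbV_r$, so $X_{\bbL_r,\bbQ_r}^{\bbG_r}=\{x\in\bbG_r:x^{-1}\sigma(x)\in\bbV_r\}$. The morphism $x\mapsto x\bbQ_r$ sends this variety into $(\bbG_r/\bbQ_r)^\sigma$ (as $\bbV_r\subseteq\bbQ_r$) and is surjective onto $(\bbG_r/\bbQ_r)^\sigma=\bbG_r^\sigma/\bbQ_r^\sigma$ by connectedness of $\bbQ_r$ and Lang's theorem; writing $x=\ell v$ in the $\sigma$-equivariant Levi decomposition $\bbQ_r=\bbL_r\ltimes\bbV_r$ one checks that $x^{-1}\sigma(x)\in\bbV_r$ forces $\ell\in\bbL_r^\sigma$, so the fibre over the base point is $\bbL_r^\sigma\cdot\bbV_r=\bigsqcup_{\ell\in\bbL_r^\sigma}\ell\bbV_r$, a disjoint union of $|\bbL_r^\sigma|$ cosets of the connected unipotent group $\bbV_r$. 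As $\bbG_r^\sigma$ acts transitively on $\bbG_r^\sigma/\bbQ_r^\sigma$ with point stabilizer $\bbQ_r^\sigma$, this gives $X_{\bbL_r,\bbQ_r}^{\bbG_r}\cong\bbG_r^\sigma\times_{\bbQ_r^\sigma}(\bbL_r^\sigma\cdot\bbV_r)$ and hence $H_c^*(X_{\bbL_r,\bbQ_r}^{\bbG_r})\cong\Ind_{\bbQ_r^\sigma}^{\bbG_r^\sigma}H_c^*(\bbL_r^\sigma\cdot\bbV_r)$ as $\bbG_r^\sigma\times\bbL_r^\sigma$-modules. Finally, on $H_c^*(\bbL_r^\sigma\cdot\bbV_r)=\bigoplus_{\ell}H_c^*(\ell\bbV_r)$ the left $\bbQ_r^\sigma$-action permutes the summands by left translation through $\bbQ_r^\sigma\twoheadrightarrow\bbL_r^\sigma$, the right $\bbL_r^\sigma$-action permutes them by right translation, and the induced maps on each one-dimensional summand $H_c^*(\ell\bbV_r)\cong H_c^{2\dim\bbV_r}(\bbV_r)$ are induced by translations and conjugations by elements of the connected groups $\bbV_r$ and $\bbL_r$, hence are trivial. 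Thus $H_c^*(\bbL_r^\sigma\cdot\bbV_r)$, up to a Tate twist and an even degree shift that do not affect traces of elements of the finite group $\bbQ_r^\sigma\times\bbL_r^\sigma$, is the inflation to $\bbQ_r^\sigma$ of the regular $(\bbL_r^\sigma,\bbL_r^\sigma)$-bimodule $\overline\QQ_\ell[\bbL_r^\sigma]$; feeding this into Definition~\ref{def:induction} gives $R_{\bbL_r,\bbQ_r}^{\bbG_r}(\chi)=\Ind_{\bbQ_r^\sigma}^{\bbG_r^\sigma}\Inf_{\bbL_r^\sigma}^{\bbQ_r^\sigma}(\chi)$ for all $\chi\in\cR(\bbL_r^\sigma)$, and composing with the transitivity identity above finishes the proof.

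The one step carrying genuine content is the last: that the commuting $\bbQ_r^\sigma$- and $\bbL_r^\sigma$-actions on the cohomology of the fibre assemble into the inflated regular representation. The essential point is that the residual automorphisms of each coset $\ell\bbV_r\cong\bbA^{\dim\bbV_r}$ are \emph{inner} --- translations and conjugations by elements ranging over the connected groups $\bbV_r$ and $\bbL_r$ --- and so act trivially on top compactly supported cohomology; the remaining bookkeeping with the fibration $X_{\bbL_r,\bbQ_r}^{\bbG_r}\to\bbG_r^\sigma/\bbQ_r^\sigma$ is routine and parallels the $r=0$ situation implicit in \cite{CI21-RT}. I do not anticipate any real obstacle.
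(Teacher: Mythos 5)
Your proof of the displayed identity is, in substance, the paper's own: the paper also first records that induction along an $F$-rational parabolic is ordinary parabolic induction (its argument is the surjectivity of the Lang map on the unipotent radical, i.e.\ your fibration of $X_{\bbL_r,\bbQ_r}^{\bbG_r}$ over $\bbG_r^\sigma/\bbQ_r^\sigma$ in compressed form) and then concludes by Proposition~\ref{prop:transitivity}, applied with the roles swapped exactly as you do. Your extra cohomological bookkeeping (the fibre $\bbL_r^\sigma\cdot\bbV_r$, triviality of connected-group actions on top compactly supported cohomology, the irrelevance of the even shift and Tate twist for traces of $(g,m)$) is correct and just makes explicit what the paper leaves implicit.

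The genuine problem is your opening structural claim, namely that the smallest $\sigma$-stable parabolic $\bfQ$ containing $\bfP$ has unipotent radical equal to $\bfV$, the largest $\sigma$-stable subgroup scheme of $\bfN$. This is false in general; indeed $\bfV=\bigcap_i\sigma^i(\bfN)$ need not be the unipotent radical of \emph{any} parabolic. Take $\bfG=\GL_4$ with $\bfT$ the unramified maximal torus for which $\sigma$ acts on the cocharacter lattice by the permutation $(12)(34)$ (so $T\cong\Res_{E/F}\Gm\times\Res_{E/F}\Gm$, non-elliptic), $\bfM=\bfT$, and $\bfP=\bfB$ the Borel attached to the ordering $e_3>e_1>e_4>e_2$. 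The only positive roots whose full $\sigma$-orbit stays positive are $e_3-e_1$ and $e_4-e_2$, so $\bfV=\bfU_{e_3-e_1}\bfU_{e_4-e_2}$ is two-dimensional, while no parabolic of $\GL_4$ has a two-dimensional unipotent radical; moreover a check of the eight parabolics containing this $\bfB$ shows the only $\sigma$-stable one is $\bfG$ itself, so your $\bfQ$ is $\bfG$, whose unipotent radical is trivial and not $\bfV$. Thus the identification you rely on breaks down, and the first clause of the lemma, read literally with this $\bfV$, cannot be obtained from your minimal-parabolic construction. To be fair, the paper's own proof also does not address that clause (it only proves the displayed formula, taking $\bfQ$ for granted), and your argument does prove the correct variant in which one \emph{defines} $\bfQ$ to be the minimal $F$-rational parabolic containing $\bfP$ and $\bfV$ to be its unipotent radical (then $\bfV$ is merely contained in, not necessarily equal to, the largest $\sigma$-stable subgroup of $\bfN$); but as written, your justification of the structure-theoretic step is not valid and should be either replaced by that redefinition or flagged as an unproved hypothesis carried over from the statement.
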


\begin{proof}
  If $\bfP$ is $F$-rational, then $R_{\bbM_r,\bbP_r}^{\bbG_r}$ is simply given by parabolic induction:
  \begin{equation}\label{eq:pInd}
    R_{\bbM_r,\bbP_r}^{\bbG_r} = \Ind_{\bbP_r^\sigma}^{\bbG_r^\sigma} \circ \Inf_{\bbM_r^\sigma}^{\bbP_r^\sigma}.
  \end{equation}
  This follows from the surjectivity of the Lang map on $\bbN_r$: if $g \in \bbG_r$ is such that $g^{-1} \sigma(g) \in \bbN_r$, then there exists an $n \in \bbN_r$ such that $n^{-1} \sigma(n) = g^{-1} \sigma(g)$ so that we have 
  \begin{equation*}
    X_{\bbM_r,\bbP_r}^{\bbG_r}/\bbN_r \cong \bbG_r^\sigma/\bbN_r^\sigma.
  \end{equation*}
  Now the lemma follows from \eqref{eq:pInd} together with Proposition \ref{prop:transitivity}.
\end{proof}


\begin{lemma}\label{lem:invariants}
  For any $s \leq r$, 
  we have a commutative diagram
  \begin{equation*}
    \begin{tikzcd}
      \cR(\bbM_r^\sigma) \ar{rr}{R_{\bbM_r,\bbP_r}^{\bbG_r}} \ar{d}[left]{(-)^{\bbM_{s+:r+}^\sigma}} && \cR(\bbG_r^\sigma) \ar{d}{(-)^{\bbG_{s+:r+}^\sigma}}\\
      \cR(\bbM_{s}^\sigma) \ar{rr}{R_{\bbM_{s},\bbP_{s}}^{\bbG_{s}}} && \cR(\bbG_{s}^\sigma)
    \end{tikzcd}
  \end{equation*}
  where the vertical arrows are given by taking invariants.
\end{lemma}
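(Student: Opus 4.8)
The plan is to reduce the assertion to a statement about the cohomology of the varieties $X_{\bbM_r,\bbP_r}^{\bbG_r}$ and then to analyse the reduction maps between these varieties at successive Moy--Prasad levels.

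\emph{Reduction to cohomology.} By Lang's theorem applied to the connected unipotent groups $\bbG_{s+:r+}$ and $\bbM_{s+:r+}$, the maps $\bbG_r^\sigma\to\bbG_s^\sigma$ and $\bbM_r^\sigma\to\bbM_s^\sigma$ are surjective with kernels $\bbG_{s+:r+}^\sigma$ and $\bbM_{s+:r+}^\sigma$, so the vertical arrows indeed land in $\cR(\bbG_s^\sigma)$, resp.\ $\cR(\bbM_s^\sigma)$. Unwinding Definition \ref{def:induction}, $R_{\bbM_r,\bbP_r}^{\bbG_r}(\chi)$ is the projection of the virtual $(\bbG_r^\sigma\times\bbM_r^\sigma)$-module $H_c^*(X_{\bbM_r,\bbP_r}^{\bbG_r},\overline\QQ_\ell)$ onto its $\chi$-isotypic part for the (commuting) $\bbM_r^\sigma$-action; since $\bbG_{s+:r+}^\sigma$ acts through the $\bbG_r^\sigma$-factor, the functor $(-)^{\bbG_{s+:r+}^\sigma}$ commutes with this projection, and a short computation with isotypic components shows the lemma is equivalent to the isomorphism of virtual $(\bbG_s^\sigma\times\bbM_r^\sigma)$-modules
\begin{equation*}
  H_c^*(X_{\bbM_r,\bbP_r}^{\bbG_r},\overline\QQ_\ell)^{\bbG_{s+:r+}^\sigma}\;\cong\;\Inf_{\bbM_s^\sigma}^{\bbM_r^\sigma}H_c^*(X_{\bbM_s,\bbP_s}^{\bbG_s},\overline\QQ_\ell),
\end{equation*}
the right-hand side carrying the $\bbM_r^\sigma$-action inflated along $\bbM_r^\sigma\to\bbM_s^\sigma$. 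Now $\bbG_{s+:r+}$ admits a filtration by normal subgroup schemes of $\bbG_r$ with abelian (vector-group) successive quotients, namely the Moy--Prasad graded pieces $\bbG_{a:a+}$ for the jumps $a\in(s,r]$, and $(-)^{\bbG_{s+:r+}^\sigma}$ factors as a composite of the corresponding invariants (and likewise for $\bbM_{s+:r+}$). So it suffices to treat a single Moy--Prasad step, i.e.\ we may assume the kernel $\bbV\colonequals\ker(\bbG_r\to\bbG_{r-1})$ is an abelian vector group, noting then that $\bbM_{(r-1)+:r+}\colonequals\ker(\bbM_r\to\bbM_{r-1})\subseteq\bbV$.

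\emph{The reduction map and its fibres.} Let $\pi\from X_{\bbM_r,\bbP_r}^{\bbG_r}\to X_{\bbM_{r-1},\bbP_{r-1}}^{\bbG_{r-1}}$ be induced by $\bbG_r\twoheadrightarrow\bbG_{r-1}$; it is well defined (as $\bbN_r\twoheadrightarrow\bbN_{r-1}$, hence $\sigma(\bbN_r)\twoheadrightarrow\sigma(\bbN_{r-1})$), surjective by surjectivity of the Lang map on $\bbV$ (as in the proof of Proposition \ref{prop:transitivity}), and $(\bbG_r^\sigma\times\bbM_r^\sigma)$-equivariant via the projection to $\bbG_{r-1}^\sigma\times\bbM_{r-1}^\sigma$. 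The group $\bbV^\sigma=\bbG_{(r-1)+:r+}^\sigma$ acts on $X_{\bbM_r,\bbP_r}^{\bbG_r}$ by left translation, hence freely, so $H_c^*(X_{\bbM_r,\bbP_r}^{\bbG_r})^{\bbV^\sigma}=H_c^*(X_{\bbM_r,\bbP_r}^{\bbG_r}/\bbV^\sigma)$ and $\pi$ descends to $\bar\pi\from X_{\bbM_r,\bbP_r}^{\bbG_r}/\bbV^\sigma\to X_{\bbM_{r-1},\bbP_{r-1}}^{\bbG_{r-1}}$. Fixing $\bar x$ with a lift $x_0$ and setting $n_0=x_0^{-1}\sigma(x_0)\in\sigma(\bbN_r)$, the identity $(x_0u)^{-1}\sigma(x_0u)=n_0\cdot\phi(u)$ with $\phi(u)=({}^{n_0^{-1}}u)^{-1}\sigma(u)$ exhibits (using that $\bbV$ is abelian) $\phi$ as the Lang map of the twisted Frobenius $\Int(n_0)\circ\sigma$ on $\bbV$; hence $\pi^{-1}(\bar x)=x_0\cdot\phi^{-1}(W)$ where $W\colonequals\sigma(\bbN_r)\cap\bbV=\sigma(\bbN_{(r-1)+:r+})$ is a vector group of some dimension $N$, and $\phi^{-1}(W)$ sits in $0\to\bbV^{\Int(n_0)\circ\sigma}\to\phi^{-1}(W)\xrightarrow{\phi}W\to 0$ with kernel $\ker\phi$. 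One checks that $\bbV^\sigma$ acts on $\pi^{-1}(\bar x)=x_0\phi^{-1}(W)$ by translation by $\ker\phi$, the map $z\mapsto{}^{x_0^{-1}}z$ being an isomorphism $\bbV^\sigma\xrightarrow{\sim}\ker\phi$; therefore $\bar\pi^{-1}(\bar x)\cong\phi^{-1}(W)/\ker\phi\cong W\cong\bbA^N$. Thus $\bar\pi$ is smooth of relative dimension $N$ with geometrically connected fibres $\bbA^N$, so $R^j\bar\pi_!\overline\QQ_\ell=0$ for $j\neq 2N$ and $R^{2N}\bar\pi_!\overline\QQ_\ell\cong\overline\QQ_\ell(-N)$ (trace map).

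\emph{Equivariance and conclusion.} It remains to show $\bbM_{(r-1)+:r+}^\sigma$ acts trivially on $H_c^*(X_{\bbM_r,\bbP_r}^{\bbG_r}/\bbV^\sigma)$. For $m$ in this group (contained in $\bbV^\sigma\subseteq\bbV$, and $\bbV$ abelian), the action on $\pi^{-1}(\bar x)=x_0\phi^{-1}(W)$ is $x_0u\mapsto x_0um^{-1}$, i.e.\ right translation by $m^{-1}$, and $\phi(m)=[n_0^{-1},m^{-1}]$ lies in $\sigma(\bbN_r)\cap\bbV=W$ because $\bfM$ normalises $\bfN$ (hence $\sigma(\bfN)$) and this commutator has Moy--Prasad degree $>r-1$. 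Hence $m$ descends to the translation $w\mapsto w\phi(m)^{-1}$ on $\bar\pi^{-1}(\bar x)\cong W$, which acts trivially on $H_c^{2N}(\bar\pi^{-1}(\bar x))$; being also the identity on the base $X_{\bbM_{r-1},\bbP_{r-1}}^{\bbG_{r-1}}$, the element $m$ thus acts trivially on every stalk of $R\bar\pi_!\overline\QQ_\ell$, hence trivially on $H_c^*(X_{\bbM_r,\bbP_r}^{\bbG_r}/\bbV^\sigma)=H_c^*(X_{\bbM_{r-1},\bbP_{r-1}}^{\bbG_{r-1}},R\bar\pi_!\overline\QQ_\ell)$. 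Since the Tate twist $(-N)$ and the even cohomological shift are invisible in $\cR(\bbG_{r-1}^\sigma\times\bbM_r^\sigma)$, we obtain the displayed isomorphism for $s=r-1$; the general case follows by iteration. The main obstacle is the fibre analysis: identifying $\pi^{-1}(\bar x)$ with the twisted Lang preimage of the linear subspace $W$, and tracking the left $\bbV^\sigma$- and right $\bbM_{(r-1)+:r+}^\sigma$-actions, so as to see that after dividing by $\bbV^\sigma$ the map $\bar\pi$ becomes a smooth $\bbA^N$-fibration on which $\bbM_{(r-1)+:r+}^\sigma$ acts by fibrewise translations. This is the same level-lowering computation carried out in \cite{CI21-RT} for parahoric Deligne--Lusztig induction; it goes through with the maximal torus there replaced by the Levi $\bfM$, the only inputs being that $\bbN_r$ and the Moy--Prasad graded pieces are unipotent. (We emphasise that controlling the $\bbM_{(r-1)+:r+}^\sigma$-action on all of $H_c^*(X_{\bbM_r,\bbP_r}^{\bbG_r})$, and not merely on its $\bbG_{(r-1)+:r+}^\sigma$-invariants, is strictly harder---this is the content of Theorem \ref{thm:display depth}.)
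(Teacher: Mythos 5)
Your proposal is correct and follows essentially the same route as the paper: the fibre of $X^{\bbG_r}_{\bbM_r,\bbP_r}\to X^{\bbG_s}_{\bbM_s,\bbP_s}$ over a point is a (conjugated) twisted Lang preimage inside the congruence kernel, and after dividing by the left action of $\bbG_{s+:r+}^\sigma$ it becomes the affine space $\sigma(\bbN_{s+:r+})$, giving the equivariant identification of $H_c^*(X^{\bbG_r}_{\bbM_r,\bbP_r},\overline\QQ_\ell)^{\bbG_{s+:r+}^\sigma}$ with $H_c^*(X^{\bbG_s}_{\bbM_s,\bbP_s},\overline\QQ_\ell)$ — the paper does this in one shot for arbitrary $s\le r$, so it never needs the kernel to be abelian, whereas your explicit check that $\bbM_{s+:r+}^\sigma$ acts by fibrewise translations is a detail the paper leaves implicit. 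The only wrinkle is your claim that $\ker(\bbG_r\to\bbG_{r-1})$ is an abelian vector group, which can fail for $r=1$ when $\x$ has non-integral Moy--Prasad jumps; this is harmless, since your own initial reduction to the graded pieces $\bbG_{a:a+}$ (or the paper's one-step-free argument) repairs it.
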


\begin{proof}
  Consider the surjective map $X_{\bbM_r,\bbP_r}^{\bbG_r} \to X_{\bbM_s,\bbP_s}^{\bbG_s}$. For any $\bar g \in X_{\bbM_s,\bbP_s}^{\bbG_s}$, choose a lift $g \in X_{\bbM_r,\bbP_r}^{\bbG_r}$ and write $\sigma(u) = g^{-1} \sigma(g)$. Then the fiber over $\bar g$ is isomorphic to
  \begin{equation*}
    \{g_r \in \bbG_{s+:r+} : (g_r g)^{-1}\sigma(g_r g) \in \sigma(\bbN_r)\} 
    = \{g_r \in \bbG_{s+:r+} : g_r^{-1}\sigma(g_r) \in g \sigma(\bbN_r) g^{-1}\},
  \end{equation*}
  where the equality holds since $g \sigma(\bbN_r) \sigma(g)^{-1} = g \sigma(\bbN_r) \sigma(u)^{-1} g^{-1} = g \sigma(\bbN_r) g^{-1}$. Hence the fibers of $\bbG_{s+:r+}^\sigma\backslash X_{\bbM_r,\bbP_r}^{\bbG_r} \to X_{\bbM_s,\bbP_s}^{\bbG_s}$ are each isomorphic to $\sigma(\bbN_{s+:r+}),$ an affine space. Therefore we see that as virtual representations of $\bbG_r^\sigma \times \bbM_r^\sigma$,
  \begin{equation*}
    H_c^*(X_{\bbM_r,\bbP_r}^{\bbG_r}, \overline \QQ_\ell)^{\bbG_{s+:r+}^\sigma} \cong H_c^*(X_{\bbM_s,\bbP_s}^{\bbG_s}, \overline \QQ_\ell). \qedhere
  \end{equation*}
\end{proof}

\begin{lemma}\label{lem:der}
  Let $\bfG_\der$ be the derived subgroup of $\bfG$ and consider the associated subgroups $\bbG_r^{\der}$, $\bbM_r^{\der}$, $\bbP_r^{\der},$ and $\bbN_r^{\der} = \bbN_r$. We have an isomorphism
  \begin{equation*}
    \bigsqcup_{\tau \in \bbT_r^\sigma/(\bbT_r^{\der})^\sigma} X_{\bbM_r^{\der}, \bbP_r^\der}^{\bbG_r^\der} \cdot \tau \to X_{\bbM_r,\bbP_r}^{\bbG_r}.
  \end{equation*}
\end{lemma}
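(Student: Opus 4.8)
The plan is to identify $X_{\bbM_r,\bbP_r}^{\bbG_r}$ with the disjoint union of the finitely many closed subvarieties $X':=X_{\bbM_r^\der,\bbP_r^\der}^{\bbG_r^\der}$ translated by coset representatives $\tau$, so that the asserted map is just this decomposition (each piece $X'\cdot\tau\subseteq\bbG_r$ being the image of $X'$ under right translation by $\tau$, hence abstractly isomorphic to $X'$). Two structural inputs are needed, both instances of the functoriality of the construction $\bfH\mapsto\bbH_r$ of \cite{CI21-RT}. First, $\bfN\subseteq\bfG^\der$, so $\bbN_r\subseteq\bbG_r^\der$ and $\bbN_r^\der=\bbN_r$; in particular $X'=\{x\in\bbG_r^\der:x^{-1}\sigma(x)\in\sigma(\bbN_r)\}$. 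Second, $\bfG^\der$ is normal in $\bfG$ with $\bfG=\bfG^\der\cdot\bfT$ and $\bfG^\der\cap\bfT=\bfT^\der$, so $\bbG_r^\der$ is normal in $\bbG_r$, and $\bbG_r=\bbG_r^\der\cdot\bbT_r$ with $\bbG_r^\der\cap\bbT_r=\bbT_r^\der$. Finally I record that $\bfT\subseteq\bfP$ normalizes $\bfN$, hence $\bbT_r$ normalizes $\bbN_r$, and therefore $\tau\,\sigma(\bbN_r)\,\tau^{-1}=\sigma(\bbN_r)$ for every $\tau\in\bbT_r^\sigma$.

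Granting these, the easy half is immediate. For $x\in X'$ and $\tau\in\bbT_r^\sigma$ one has $(x\tau)^{-1}\sigma(x\tau)=\tau^{-1}\bigl(x^{-1}\sigma(x)\bigr)\tau\in\tau^{-1}\sigma(\bbN_r)\tau=\sigma(\bbN_r)$, so $X'\cdot\tau\subseteq X_{\bbM_r,\bbP_r}^{\bbG_r}$; and if moreover $\tau\in(\bbT_r^\der)^\sigma\subseteq(\bbM_r^\der)^\sigma$, then $x\tau\in\bbG_r^\der$ as well, so $X'\cdot\tau=X'$. Thus $X'\cdot\tau$ depends only on the class of $\tau$ in $\bbT_r^\sigma/(\bbT_r^\der)^\sigma$, and the source of the map is well defined. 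For disjointness: if $x\tau=x'\tau'$ with $x,x'\in X'$ and $\tau,\tau'\in\bbT_r^\sigma$, then $x^{-1}x'=\tau(\tau')^{-1}\in\bbG_r^\der\cap\bbT_r^\sigma=(\bbT_r^\der)^\sigma$, so $\tau$ and $\tau'$ represent the same coset.

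The main step is surjectivity, i.e.\ $X_{\bbM_r,\bbP_r}^{\bbG_r}\subseteq X'\cdot\bbT_r^\sigma$. Let $g\in\bbG_r$ with $g^{-1}\sigma(g)\in\sigma(\bbN_r)\subseteq\bbG_r^\der$. Using $\bbG_r=\bbG_r^\der\cdot\bbT_r$, write $g=x_1\tau_1$ with $x_1\in\bbG_r^\der$ and $\tau_1\in\bbT_r$. Since $g^{-1}\sigma(g)$ and $x_1^{-1}\sigma(x_1)$ both lie in $\bbG_r^\der$ while $\bbT_r$ normalizes $\bbG_r^\der$, the identity $g^{-1}\sigma(g)=\tau_1^{-1}\bigl(x_1^{-1}\sigma(x_1)\bigr)\sigma(\tau_1)$ forces $\tau_1^{-1}\sigma(\tau_1)\in\bbG_r^\der\cap\bbT_r=\bbT_r^\der$. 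As $\bbT_r^\der$ is connected (it is the Moy--Prasad quotient of a torus split over $\breve F$), Lang's theorem yields $t_0\in\bbT_r^\der$ with $t_0^{-1}\sigma(t_0)=\tau_1^{-1}\sigma(\tau_1)$; then $\tau_0:=\tau_1 t_0^{-1}$ is $\sigma$-fixed and $g=(x_1t_0)\tau_0$ with $x_1t_0\in\bbG_r^\der$. Finally $(x_1t_0)^{-1}\sigma(x_1t_0)=\tau_0\bigl(g^{-1}\sigma(g)\bigr)\tau_0^{-1}\in\tau_0\sigma(\bbN_r)\tau_0^{-1}=\sigma(\bbN_r)$, so $x_1t_0\in X'$ and $g\in X'\cdot\tau_0$.

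Putting the pieces together, $X_{\bbM_r,\bbP_r}^{\bbG_r}=\bigsqcup_\tau X'\cdot\tau$ as sets, the union over representatives for $\bbT_r^\sigma/(\bbT_r^\der)^\sigma$. Each $X'\cdot\tau$ is the translate of the closed subscheme $X'\subseteq\bbG_r^\der\subseteq\bbG_r$ by a $k$-point, hence is closed in $\bbG_r$ and a fortiori in $X_{\bbM_r,\bbP_r}^{\bbG_r}$; since there are finitely many of them and they are pairwise disjoint, each is also open. Therefore the displayed map is an isomorphism onto this clopen decomposition, each piece being identified with $X'$ by right translation. The only genuinely delicate input is the second structural fact above---compatibility of the functor $\bfH\mapsto\bbH_r$ with $\bfG=\bfG^\der\cdot\bfT$ and with $\bfG^\der\cap\bfT=\bfT^\der$---which I would cite from the foundational material of \cite{CI21-RT}; everything else is the Lang-map bookkeeping already seen in Proposition~\ref{prop:transitivity}.
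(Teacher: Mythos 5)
Your argument is correct, and its skeleton coincides with the paper's: the containment $X_{\bbM_r^\der,\bbP_r^\der}^{\bbG_r^\der}\cdot\tau\subseteq X_{\bbM_r,\bbP_r}^{\bbG_r}$ is the easy conjugation computation, and surjectivity is obtained by writing an arbitrary point as (derived-group part)$\cdot$($\sigma$-fixed torus part) and conjugating $g^{-1}\sigma(g)$ back into $\sigma(\bbN_r)$ --- exactly the paper's final step. The one place where you genuinely diverge is the sourcing of the key group-theoretic input. The paper cites \cite[Lemma 3.1.3]{Kal19} for the identification $(\bbG_r/\bbG_r^\der)^\sigma=\bbT_r^\sigma/(\bbT_r^\der)^\sigma$ and is done; you instead unwind this by asserting $\bbG_r=\bbG_r^\der\cdot\bbT_r$ with $\bbG_r^\der\cap\bbT_r=\bbT_r^\der$, and then restoring $\sigma$-fixedness of the torus factor via Lang's theorem on the connected group $\bbT_r^\der$. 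The Lang step is fine (and is the same mechanism hiding behind the cited lemma), but be careful with the attribution: the product decomposition at the parahoric/Moy--Prasad level is not a formal consequence of the ``functoriality'' of $\bfH\mapsto\bbH_r$ in \cite{CI21-RT}; it rests on Bruhat--Tits facts (the parahoric is generated by $\cT_0(\breve\cO)$ and the affine root subgroups, which lie in $\bfG^\der$, together with $G^\der(\breve F)\cap G(\breve F)_{\x,0}=G^\der(\breve F)_{\x,0}$ and $T(\breve F)\cap G(\breve F)_{\x,0}=T(\breve F)_0$). So either supply that short generation argument or simply cite \cite[Lemma 3.1.3]{Kal19} as the paper does, which packages both the quotient identification and the $\sigma$-fixed-point statement. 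Your extra verifications (well-definedness on cosets, disjointness via $\bbG_r^\der\cap\bbT_r^\sigma=(\bbT_r^\der)^\sigma$, clopenness of the translates) are correct and fill in what the paper labels as ``clear.''
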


\begin{proof}
  Since $\bbN_r^{\der} = \bbN_r$, it is clear that the map in question is an inclusion. It remains to show surjectivity. If $x \in X_{\bbM_r,\bbP_r}^{\bbG_r}$, then by definition we have $x^{-1} \sigma(x) \in \sigma(\bbN_r) = \sigma(\bbN_r^{\der})$, and so in particular $x \bbG_r^\der = \sigma(x) \bbG_r^{\der} \in (\bbG_r/\bbG_r^{\der})^\sigma$. By \cite[Lemmas 3.1.3, 3.1.4]{Kal19}, we have $(\bbG_r/\bbG_r^{\der})^\sigma = \bbT_r^\sigma/(\bbT_r^\der)^\sigma$, which therefore implies that $x = y \cdot \tau$ for some $y \in \bbG_r^\der$ and $\tau \in \bbS_r^\sigma$. To conclude, we note that $y^{-1} \sigma(y) = \tau x^{-1} \sigma(x) \tau^{-1} \in \tau \sigma(\bbN_r) \tau^{-1} = \sigma(\bbN_r^\der)$.
\end{proof}




We now use Lemma \ref{lem:der} to establish the behavior of the functor $R_{\bbM_r,\bbP_r}^{\bbG_r}$ under twisting.

\begin{proposition}\label{prop:twisting}
  Let $\tilde \phi \from \bbG_r^\sigma \to \overline \QQ_\ell^\times$ be any character and write $\phi = \tilde \phi|_{\bbT_r^\sigma}$. Assume that $\tilde \phi|_{(\bbG_r^\der)^\sigma} \equiv 1$. Then for any $\chi \in \cR(\bbM_r^\sigma)$,
  \begin{equation*}
    R_{\bbM_r,\bbP_r}^{\bbG_r}(\chi \otimes \phi) \cong R_{\bbM_r,\bbP_r}^{\bbG_r}(\chi) \otimes \tilde \phi.
  \end{equation*}
\end{proposition}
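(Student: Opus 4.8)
The plan is to trace the twist through the cohomology of the variety $X_{\bbM_r,\bbP_r}^{\bbG_r}$, using Lemma \ref{lem:der} to reduce the bookkeeping to the derived group. First I would observe that the issue is purely about how the character $\tilde\phi$ interacts with the $\bbM_r^\sigma \times \bbG_r^\sigma$-action on $X \colonequals X_{\bbM_r,\bbP_r}^{\bbG_r}$, via $(m,g)\from x\mapsto gxm^{-1}$. The heart of the matter is the following: for $x\in X$, the element $\tilde\phi(x)$ does not make sense ($\tilde\phi$ is a character of $\bbG_r^\sigma$, not of the whole group $\bbG_r$), but on the ``twisting part'' $X$ decomposes compatibly. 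Concretely, using Lemma \ref{lem:der}, write $X=\bigsqcup_{\tau\in\bbT_r^\sigma/(\bbT_r^\der)^\sigma} X^\der\cdot\tau$ where $X^\der=X_{\bbM_r^\der,\bbP_r^\der}^{\bbG_r^\der}$; for $x=y\tau$ with $y\in\bbG_r^\der$ we have $x\bbG_r^\der=\tau\bbG_r^\der$, so the $\bbG_r^\der$-coset of $x$ — equivalently the image of $x$ in $(\bbG_r/\bbG_r^\der)^\sigma=\bbT_r^\sigma/(\bbT_r^\der)^\sigma$ — is a well-defined $\bbM_r^\sigma\times\bbG_r^\sigma$-\emph{semi}equivariant invariant: under $(m,g)$ it transforms by $\mathrm{pr}(g)\mathrm{pr}(m)^{-1}$ where $\mathrm{pr}\from\bbG_r^\sigma\to(\bbG_r/\bbG_r^\der)^\sigma$. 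Since $\tilde\phi$ is a character, it factors (up to a character of finite order we must track carefully) through a character of $(\bbG_r/\bbG_r^\der)^\sigma$ precisely on the relevant subgroup; more precisely, I would choose any character $\bar\phi$ of $(\bbG_r/\bbG_r^\der)^\sigma$ whose pullback to $\bbT_r^\sigma$ agrees with $\phi$ on $(\bbT_r^\der)^\sigma$... but this is the wrong level of generality, so let me instead keep $\tilde\phi$ itself.

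The clean approach: define a map of $\bbM_r^\sigma\times\bbG_r^\sigma$-representations. Pick, for each coset representative $\tau$, the value $\tilde\phi(\tau)$ (choosing $\tau\in\bbT_r^\sigma$, well-defined up to $(\bbT_r^\der)^\sigma$, on which $\tilde\phi$ restricts to $\phi$). Then for $x\in X$ lying in the component $X^\der\cdot\tau$, the assignment $x\mapsto \tilde\phi(\tau)$ gives a locally constant function on $X$, call it $f$, and the key computation is its transformation law: for $(m,g)\in\bbM_r^\sigma\times\bbG_r^\sigma$ and $x$ in component $\tau$, the point $gxm^{-1}$ lies in the component indexed by $\mathrm{pr}(g)\,\tau\,\mathrm{pr}(m)^{-1}=\mathrm{pr}(gm^{-1})\tau$ (since $\bbT_r^\sigma$ is abelian and central issues vanish in the quotient), hence $f(gxm^{-1})=\tilde\phi(\mathrm{pr}(gm^{-1}))\tilde\phi(\tau)$. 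Because $\tilde\phi$ is \emph{multiplicative on all of $\bbG_r^\sigma$} (not just on $\bbM_r^\sigma$), and $\tilde\phi|_{\bbM_r^\sigma}$ has restriction to $\bbT_r^\sigma$ equal to $\phi$ — wait, this needs care: $\tilde\phi(\mathrm{pr}(gm^{-1}))$ is \emph{not} $\tilde\phi(g)\tilde\phi(m)^{-1}$ in general unless $\tilde\phi$ factors through $\mathrm{pr}$. This is exactly the subtlety, and I expect \textbf{this to be the main obstacle}: one cannot in general split $\tilde\phi$ into a piece that factors through $\mathrm{pr}$ and an irrelevant piece. The resolution (and I believe this is what the paper does, cf.\ the pointed reference to Remark \ref{rem:green insufficient}) is that one does \emph{not} need $f$ to be a genuine character; one only needs that the cohomology complexes, as modules over the group algebra, satisfy the asserted twist. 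Concretely, the statement $R_{\bbM_r,\bbP_r}^{\bbG_r}(\chi\otimes\phi)\cong R_{\bbM_r,\bbP_r}^{\bbG_r}(\chi)\otimes\tilde\phi^{-1}$ unwinds, via Definition \ref{def:induction}, to the identity of virtual characters
\begin{equation*}
  \frac{1}{|\bbM_r^\sigma|}\sum_{m\in\bbM_r^\sigma}\tr((g,m);H_c^*(X,\overline\QQ_\ell))\,\overline{\chi(m)\phi(m)}
  = \tilde\phi(g)^{-1}\cdot\frac{1}{|\bbM_r^\sigma|}\sum_{m\in\bbM_r^\sigma}\tr((g,m);H_c^*(X,\overline\QQ_\ell))\,\overline{\chi(m)},
\end{equation*}
and this follows if one proves the \emph{eigenvalue relation}: for all $(g,m)\in\bbG_r^\sigma\times\bbM_r^\sigma$,
\begin{equation*}
  \tr((g,m);H_c^*(X,\overline\QQ_\ell))\cdot\bigl(\tilde\phi(g)\,\phi(m)^{-1} - 1\bigr) = 0,
\end{equation*}
equivalently that $(g,m)$ acts on $H_c^*(X)$ with eigenvalues supported on the locus $\tilde\phi(g)=\phi(m)$. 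I would prove this by the standard localization argument: $\tr((g,m);H_c^*(X))$ is computed on the fixed-point set $X^{(g,m)}$ (Deligne--Lusztig fixed-point formula / Lefschetz), and on the component $X^\der\cdot\tau$ a fixed point of $(g,m)$ forces $gxm^{-1}=x$, hence $\mathrm{pr}(g)\tau=\tau\,\mathrm{pr}(m)$, i.e.\ $\mathrm{pr}(gm^{-1})=1$ in $(\bbG_r/\bbG_r^\der)^\sigma$, i.e.\ $gm^{-1}\in\bbG_r^\der\cdot\ker$...

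\textbf{Let me restate the plan more honestly.} The structure is: (1) reduce to the trace identity above by unwinding Definition \ref{def:induction}; (2) use Lemma \ref{lem:der} to write $H_c^*(X)=\bigoplus_{\tau}H_c^*(X^\der\cdot\tau)$ and analyze the $\bbG_r^\sigma\times\bbM_r^\sigma$-action permuting/scaling the summands; (3) the torus $\bbT_r^\sigma$ acts on $X^\der\cdot\tau$ (from either side) and the twist by $\tilde\phi$ of the $\bbG_r$-factor and $\phi$ of the $\bbM_r$-factor cancel on each summand because the right $\bbT_r^\sigma$-translation on $X^\der\cdot\tau$ agrees with a left $\bbT_r^\sigma$-translation composed with conjugation, and cohomology of $X^\der$ sees the difference only through the character by which $\bbT_r^\sigma$ acts — here the essential input is that on $X^\der$ the two $\bbT_r^\sigma$-actions differ by the action of $(\bbT_r^\der)^\sigma$ and a central/quotient piece, so $\tilde\phi$ restricted appropriately matches $\phi$. \textbf{The main obstacle}, as the paper flags, is that in degree $r=0$ one could invoke a Green-function-type argument (character values on the torus-translated variety are controlled by the torus character), but for $r>0$ the variety $X^\der$ is not cohomologically rigid enough for that, so one must instead run the fixed-point/Lefschetz localization directly and argue component-by-component that $(g,m)$ with $\tilde\phi(g)\ne\phi(m)$ has empty fixed locus on $X$ — after first trivializing the twist on each $X^\der$-component. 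Concretely: on $X^\der\cdot\tau$, an element $(g,m)$ with $\mathrm{pr}(gm^{-1})\ne 1$ maps the component to a different component, so contributes $0$ to the trace; when $\mathrm{pr}(gm^{-1})=1$, write $g=g'\,t$, $m=m'\,t$ with $g',m'$ related to $\bbG_r^\der,\bbM_r^\der$ and $t\in\bbT_r^\sigma$ up to $(\bbT_r^\der)^\sigma$, and check that $(g,m)$ acts on $H_c^*(X^\der\cdot\tau)$ as $(g',m')$ acting on $H_c^*(X^\der)$ times the scalar $\tilde\phi(\tau)/{}^{?}$... which after reindexing yields exactly the scalar $\tilde\phi(g)^{-1}\phi(m)$ needed — here one uses that $\tilde\phi$ is a character of \emph{all} of $\bbG_r^\sigma$ so $\tilde\phi(g'\,t)=\tilde\phi(g')\tilde\phi(t)$ and $\tilde\phi(t)=\phi(t)$ when $t\in\bbT_r^\sigma$, while the $\bbG_r^\der$-pieces $g',m'$ contribute identically on both sides of the desired isomorphism since $X^\der$ only involves the derived group. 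Assembling these, the trace identity holds for every $(g,m)$, which gives the proposition. I expect steps (2)–(3), i.e.\ the precise decomposition of the bi-action on $\bigoplus_\tau H_c^*(X^\der\cdot\tau)$ and the cancellation of the two twists on each summand, to contain essentially all the content; step (1) is formal.
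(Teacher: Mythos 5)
Your final plan (after the restatement) is essentially the paper's proof: unwind Definition \ref{def:induction} into a trace computation, use Lemma \ref{lem:der} to split $X_{\bbM_r,\bbP_r}^{\bbG_r}$ into components indexed by $\bbT_r^\sigma/(\bbT_r^\der)^\sigma$, note that $(g,m)$ permutes these components so only pairs with $m\bbG_r^\der=g\bbG_r^\der$ contribute to the trace, and for those use multiplicativity of $\tilde\phi$ on all of $\bbG_r^\sigma$ to replace $\phi(m)^{-1}$ by $\tilde\phi(g)^{-1}$ and factor it out. The earlier detour about $\tilde\phi$ not factoring through $(\bbG_r/\bbG_r^\der)^\sigma$ is not needed, exactly as the paper's component-fixing argument shows.
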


\begin{proof}
  We use the Deligne--Lusztig fixed-point formula \cite[Theorem 3.2]{DL76} and Lemma \ref{lem:der}. By definition,
  \begin{equation}\label{eq:RTG def}
    R_{\bbM_r,\bbP_r}^{\bbG_r}(\chi \otimes \phi)(g)
    = \frac{1}{|\bbM_r^\sigma|} \sum_{m \in \bbM_r^\sigma} \tr((g,m); H_c^*(X_{\bbM_r,\bbP_r}^{\bbG_r},\overline \QQ_\ell)) \cdot \overline{\chi(m)} \cdot \phi(m)^{-1}.
  \end{equation}
  Let us work with the summand corresponding to $m$. Under the isomorphism in Lemma \ref{lem:der}, the action of $(g,m)$ on $x \cdot \tau$ for $x \in X_{\bbM_r^\der,\bbP_r^\der}^{\bbG_r^\der}$ is given by
  \begin{equation}\label{eq:der action}
    (g,m) \cdot (x \cdot \tau) = g_0 (\tau_g x \tau_g^{-1}) \tau_g \tau m_0 \tau^{-1} \tau_g^{-1} \cdot \tau_g \tau \tau_m,
  \end{equation}
  where $g_0 \in (\bbG_r^{\der})^\sigma$ and $\tau_g \in \bbT_r^\sigma$ are such that $g = g_0 \tau_g$ and $m_0 \in (\bbM_r^{\der})^\sigma$ and $\tau_m \in \bbT_r^\sigma$ are such that $m = m_0 \tau_m$. We see that if $\tau_g \tau_m \neq 1$, then $(g,m)$ freely permutes the copies of $X_{\bbM_r^\der,\bbP_r^\der}^{\bbG_r^\der}$. Therefore
  \begin{equation*}
    \tr((g,m); H_c^*(X_{\bbM_r,\bbP_r}^{\bbG_r}, \overline \QQ_\ell)) = 0 \qquad \text{if $\tau_m \neq \tau_g^{-1}$.}
  \end{equation*}
  Therefore, in \eqref{eq:der action}, we need only sum over $m \in \bbM_r^\sigma$ for which $\tau_m = \tau_g^{-1}$. It is at this point that we use the assumption that $\tilde \phi|_{(\bbG_r^\der)^\sigma} \equiv 1$: we then have
  \begin{equation*}
    \phi(m)^{-1} = \tilde \phi(m)^{-1} = \tilde \phi(m_0)^{-1} \tilde \phi(\tau_m)^{-1} = \tilde \phi(\tau_m)^{-1} = \tilde \phi(\tau_g) = \tilde \phi(g_0) \tilde \phi(\tau_g) = \tilde \phi(g).
  \end{equation*}
  It follows then that the summand of \eqref{eq:der action} corresponding to $m \in \bbM_r^\sigma$ is
  \begin{equation*}
    \tr((g,m); H_c^*(X_{\bbM_r,\bbP_r}^{\bbG_r}, \overline \QQ_\ell)) \cdot \overline{\chi(m)} \cdot \tilde\phi(g).
  \end{equation*}
  This implies that we may factor out $\tilde\phi(g)$ in \eqref{eq:RTG def}, and the lemma follows.
\end{proof}

\begin{remark}\label{rem:small q}
  The assumption that $\tilde \phi|_{(\bbG_r^\der)^\sigma} \equiv 1$ is a genuine assumption: the one-dimensional representations of $\bbG_r^\sigma$ are in bijection with the one-dimensional representations of $\bbG_r^\sigma/[\bbG_r^\sigma,\bbG_r^\sigma]$, so when $[\bbG_r^\sigma,\bbG_r^\sigma]$ is a proper subgroup of $(\bbG_r^\der)^\sigma$, then there obviously exist $\tilde \phi$'s which do not factor through $(\bbG_r/\bbG_r^\der)^\sigma$. We thank Masao Oi for pointing out this subtlety.
\end{remark}

\begin{remark}\label{rem:green insufficient}
  In the $r=0$ case, Proposition \ref{prop:twisting} follows from the Deligne--Lusztig character formula expressing $R_{\bbT_0,\bbB_0}^{\bbG_0}(\theta)$ in terms of $\theta$ and a Green function (which does not depend on $\theta$) \cite[Theorem 4.2]{DL76}. There is an analogous formula in the $r>0$ case, proved by exactly the same method as in \textit{op.\ cit.} However, the ``Green function'' that arises depends on $\theta|_{\bbT_{0+:r+}^\sigma}$, which makes this approach insufficient to prove Proposition \ref{prop:twisting}.
\end{remark}

\section{Generic Mackey formula for a torus}\label{sec:generic mackey}

\subsection{Generic characters and Howe factorizations}\label{subsec:generic characters}

Let $\bfH$ be a connected reductive subgroup of $\bfG$ containing $\bfT$.

\begin{definition}[weak $(\bfH,\bfG)$-genericity]
  A character $\phi$ of $\bfH(F)$ is \textit{weakly $(\bfH,\bfG)$-generic of depth $r$} if $\phi$ has depth $r$ (i.e.\ $\phi|_{\bfH(F)_{x,r+}} = \triv$) and for all $\alpha \in \Phi(\bfG,\bfT) \smallsetminus \Phi(\bfH,\bfT)$, we have $\phi|_{N_{E/F}(\alpha^\vee(E_r^\times))} \neq \triv$, where $E$ is a splitting field of $\bfT$. We say a representation $\rho$ of $\bbH_r^\sigma$ is $(\bfH,\bfG)$-generic if the restriction $\rho|_{\bbH_{r:r+}^\sigma}$ is the restriction of a sum of $(\bfH,\bfG)$-generic characters of depth $r$.
\end{definition}

By \cite[Lemma 3.6.8]{Kal19}, this exactly means that $\phi$ satisfies GE1 of \cite[\S8]{Yu01}. We call this notion of genericity weak in order to distinguish it from the standard notion of genericity, which additionally requires condition GE2 of \textit{op.\ cit.} This distinction only affects finitely many primes $p$ as GE2 is automatic if $p$ is not bad for $\bfG$ and does not divide the order of $|\pi_1(\widehat \bfG_{\der})|$ (see \cite[Remark 3.4]{CO21} and \cite[\S4]{Kaletha} for more details). 
Note that weakly $(\bfT,\bfG)$-generic characters of depth $r$ are exactly the regular characters in the sense of \cite[1.5]{Lus04}.

\begin{definition}[Howe factorization]\label{def:Howe}
  Set $\bfG^{-1} = \bfT$. 
  A \textit{Howe factorization} of $(\theta,\bfT)$ is a sequence of characters $\phi_i \from \bfG^i(F) \to \bbC^\times$ for $i = -1,0, \ldots, d$ with the following properties:
  \begin{enumerate}
    \setcounter{enumi}{-1}
    \item $\bfG^i$ is a twisted Levi subgroup of $\bfG$
    \item $\theta = \prod_{i=-1}^d \phi_i|_{T(F)}$.
    \item For all $0 \leq i \leq d$, the character $\phi_i$ is trivial on $\bfG_\der^i(F)$.
    \item For all $0 \leq i < d$, the character $\phi_i$ has depth $r_i$ and is weakly $(\bfG^i,\bfG^{i+1})$-generic. For $i = d$, we take $\phi_d = 1$ if $r_d = r_{d-1}$ and has depth $r_d$ otherwise. For $i = -1,$ the character $\phi_{-1}$ is trivial if $\bfG^0 = \bfT$ and otherwise satisfies $\phi_{-1}|_{T(F)_{0+}} = 1$.
  \end{enumerate}
  We call $d$ the \textit{Howe factorization length} of $(\theta,\bfT)$.
\end{definition}

Note that Howe factorizations may not be unique: there may be many choices of $\phi_i$'s which work. However, the reductive subgroups $\bfG^i$ of $\bfG$ are uniquely determined: for each positive real number $s$, consider the set of roots
\begin{equation*}
  \Phi_s \colonequals \{\alpha \in \Phi(\bfG,\bfT) : \theta|_{N_{E/F}(\alpha^\vee(E_s^\times))} = 1\}.
\end{equation*}
Then the depths $r_i$ in any Howe factorization of $(\theta,\bfT)$ are exactly the positive numbers (in fact, integers!) where $\Phi_{r_i} \neq \Phi_{r_i+\epsilon}$ for any $\epsilon > 0$, and $\bfG^i$ is by definition the connected reductive subgroup of $\bfG$ with maximal torus $\bfT$ and root system $\Phi_{r_i}$. 


\begin{theorem}[{\cite[Lemma 3.6.1, Proposition 3.6.7]{Kal19}}]\label{thm:Howe}
  If the $\Phi_s$ associated to $(\theta,\bfT)$ are each Levi subsystems of $\Phi(\bfG,\bfT)$, then $(\theta,\bfT)$ has a Howe factorization. If $p$ is not a torsion prime for $\Phi(\bfG,\bfT)$, then any character $\theta$ of $T(F)$ has a Howe factorization.
\end{theorem}

When the $\bfG^i$'s are Levi subgroups of $\bfG$, we can make the following definition:

\begin{definition}\label{def:rTG}
  Given a Howe factorization $\vec \phi = (\phi_{-1},\ldots, \phi_d)$ of $(\theta,\bfT)$, choose a nested sequence of parabolic subgroups $\bfP^{i-1} \subset \bfG^{i}$ with Levi component $\bfG^{i-1}$ so that we have
  \begin{equation*}
    \begin{tikzcd}
      \bfT = \bfG^{-1} \ar[phantom]{r}{\subseteq} \ar[phantom]{d}[sloped]{\subsetneq} & \bfG^0 \ar[phantom]{r}{\subsetneq}\ar[phantom]{d}[sloped]{\subsetneq} & \bfG^1 \ar[phantom]{r}{\subsetneq}\ar[phantom]{d}[sloped]{\subsetneq} & \cdots \ar[phantom]{r}{\subsetneq} & \bfG^{d-1} \ar[phantom]{r}{\subsetneq}\ar[phantom]{d}[sloped]{\subsetneq} & \bfG^d = \bfG \ar[phantom]{d}[sloped]{=} \\
      \bfB = \bfP^{-1} \ar[phantom]{r}{\subseteq} & \bfP^0 \ar[phantom]{r}{\subsetneq} & \bfP^1 \ar[phantom]{r}{\subsetneq} & \cdots \ar[phantom]{r}{\subsetneq} & \bfP^{d-1} \ar[phantom]{r}{\subsetneq} & \bfG
    \end{tikzcd} 
  \end{equation*}
  Define for $0 \leq i \leq d$:
  \begin{equation*}
    r_{\bbT_{r_i}}^{\bbG_{r_i}^{i}}(\phi_{-1}, \ldots, \phi_i; \vec \bfP) = \Inf_{\bbG_{r_{i-1}}^{i \sigma}}^{\bbG_{r_i}^{i \sigma}}\left(R_{\bbG_{r_{i-1}}^{i-1},\bbP_{r_{i-1}}^{i-1}}^{\bbG_{r_{i-1}}^{i}}\left(r_{\bbT_{r_{i-1}}}^{\bbG_{r_{i-1}}^{i-1}}(\phi_{-1}, \ldots, \phi_{i-1}; \vec \bfP)\right)\right) \otimes \phi_i.
  \end{equation*}
  We write
  \begin{equation*}
    r_{\bbT_r}^{\bbG_r}(\vec \phi; \vec \bfP) \colonequals r_{\bbT_{r_d}}^{\bbG_{r_d}^{d}}(\phi_{-1}, \ldots, \phi_d; \vec \bfP).
  \end{equation*}
\end{definition}

\subsection{Generic Mackey formula}\label{subsec:generic mackey}

Set $\cS(\bbT_r, \bbM_r) = \{x \in \bbG_r(\overline \FF_q) : x^{-1} \bbT_r x \subset \bbM_r\}$. We have an identification $\bbT_r(\overline \FF_q) \backslash \cS(\bbT_r,\bbM_r) / \bbT_r(\overline \FF_q) \cong \bbT_0(\overline \FF_q) \backslash \cS(\bbT_0,\bbM_0) / \bbT_0(\overline \FF_q)$ and the generalized Bruhat decomposition
\begin{equation*}
  \bbG_0 = \bigsqcup_{w \in \bbT_0(\overline \FF_q) \backslash \cS(\bbT_0, \bbM_0)/\bbM_0(\overline \FF_q)} \bbU_0 \dot w \bbM_0 \bbN_0
\end{equation*}
pulls back to a decomposition
\begin{equation*}
  \bbG_r = \bigsqcup_{w \in \bbT_0(\overline \FF_q) \backslash \cS(\bbT_r, \bbM_r)/\bbM_0(\overline \FF_q)} \bbG_{r,w},
\end{equation*}
where
\begin{equation*}
  \bbG_{r,w} \colonequals \bbU_r \dot w \bbM_r \bbN_r = \bbB_r \bbK_{w,0+:r+} \dot w \bbP_r, \qquad \bfK_w \colonequals \bfU^- \cap \dot w \bfN^- \dot w^{-1}.
\end{equation*}

The main theorem of this section will be a formula relating the parahoric Lusztig and Deligne--Lusztig inductions $R_{\bbM_r \subset \bbP_r}^{\bbG_r}$ and $R_{\bbT_r \subset \bbB_r}^{\bbG_r}$. 

\begin{theorem}[Generic Mackey formula]\label{thm:generic mackey}
  Let $\rho$ be any representation of $\bbM_r^\sigma$ which is weakly $(\bfM,\bfG)$-generic. Then
  \begin{equation*}
    {}^*R_{\bbT_r, \bbB_r}^{\bbG_r} \circ R_{\bbM_r, \bbP_r}^{\bbG_r}(\rho)  = \sum_{w \in \bbT_r^\sigma \backslash \cS(\bbT_r, \bbM_r)^\sigma/\bbM_r^\sigma} {}^* R_{{}^w\bbT_r, {}^w\bbB_r \cap \bbM_r}^{\bbM_r}(\ad(w^{-1})^*\rho).
  \end{equation*}
\end{theorem}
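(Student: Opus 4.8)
The plan is to compute the composite $^*R_{\bbT_r,\bbB_r}^{\bbG_r} \circ R_{\bbM_r,\bbP_r}^{\bbG_r}$ as the cohomology of a fiber product and then cut it up along the Bruhat stratification. Concretely, by the definitions in Definition~\ref{def:induction} and a standard argument, $^*R_{\bbT_r,\bbB_r}^{\bbG_r} \circ R_{\bbM_r,\bbP_r}^{\bbG_r}$ is computed by the $\bbG_r^\sigma$-coinvariants of $H_c^*(X_{\bbT_r,\bbB_r}^{\bbG_r}) \otimes H_c^*(X_{\bbM_r,\bbP_r}^{\bbG_r})$, which via the K\"unneth formula is $H_c^*$ of the variety
\[
  Z \colonequals \{(x,y) \in X_{\bbT_r,\bbB_r}^{\bbG_r} \times X_{\bbM_r,\bbP_r}^{\bbG_r}\}/\bbG_r^\sigma,
\]
where $\bbG_r^\sigma$ acts diagonally on the left; the residual $\bbT_r^\sigma \times \bbM_r^\sigma$-action on $Z$ (acting on the right of $x$ and $y$ respectively) is the one we must understand. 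The first step is to identify $Z$ with the variety
\[
  Z \cong \{g \in \bbG_r : g^{-1}\sigma(g) \in \sigma(\bbU_r)\cdot g^{-1} \cdot (\text{stuff}) \},
\]
more precisely (following the $r=0$ argument of Deligne--Lusztig \cite{DL83} as recounted in \cite{DM20}) with the set of pairs $(g\bbT_r^\sigma$-data, $g'\bbM_r^\sigma$-data$)$ encoded by a single element $g^{-1}g' \in \bbG_r$; the point is that the diagonal quotient lets us translate $x$ to $1$ and record only the relative position.

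Second, I would stratify $Z$ by the position of $g^{-1}g'$ in the generalized Bruhat decomposition $\bbG_r = \bigsqcup_w \bbG_{r,w}$ recalled in Section~\ref{subsec:generic mackey}, with $w$ ranging over $\bbT_0(\overline{\FF_q}) \backslash \cS(\bbT_r,\bbM_r)/\bbM_0(\overline{\FF_q})$, and descend to the $\sigma$-fixed strata, which are indexed by $w \in \bbT_r^\sigma \backslash \cS(\bbT_r,\bbM_r)^\sigma / \bbM_r^\sigma$. On each $\sigma$-stable stratum $Z_w$, I would show (again mimicking the torus case of \cite{DL83}) that the cohomology $H_c^*(Z_w)$ with its $\bbT_r^\sigma \times \bbM_r^\sigma$-action is precisely what computes $^*R_{{}^w\bbT_r, {}^w\bbB_r \cap \bbM_r}^{\bbM_r}(\ad(w^{-1})^*(-))$ --- that is, $Z_w$ fibers over $X_{{}^w\bbT_r, {}^w\bbB_r\cap\bbM_r}^{\bbM_r}$ with fibers being (products of) affine spaces coming from $\bbN_r$, $\bbU_r$ and the group $\bbK_w = \bfU^- \cap \dot w \bfN^- \dot w^{-1}$. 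Here the Moy--Prasad/jet-scheme machinery of \cite{Lus04,Sta09,CI21-RT} is needed to check that the relevant Lang maps on $\bbN_r$, $\bbU_r$, $\bbK_{w,r}$ are surjective and that the affine-space fibrations behave cohomologically as in the $r=0$ case; this is the step where the parahoric setting genuinely requires work beyond quoting \cite{DL83}.

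Third --- and this is where the genericity hypothesis on $\rho$ enters --- the ``bad'' strata, namely those where $w$ does not normalize a torus properly so that the residual quotient group acquires a nontrivial unipotent part (coming from roots $\alpha \in \Phi(\bfG,\bfT) \smallsetminus \Phi(\bfM,\bfT)$ appearing in $\bbK_w$), contribute $H_c^*(Z_w \otimes \rho)_{\cdots}$ which vanishes: the point is that the extra unipotent factor carries a nontrivial $\bbK_{w,r:r+}^\sigma$-action, and $(\bfM,\bfG)$-genericity of $\rho$ forces the $\rho$-isotypic part of $H_c^*$ of that unipotent factor (an affine space with a linear torus/Frobenius action twisted by a nontrivial additive character through $\alpha^\vee$) to be zero --- this is exactly the mechanism used in \cite{Lus04,Sta09,CI21-RT} to prove the scalar product formula under $(T,G)$-genericity, and I would invoke it here as a black box lemma about vanishing of $H_c^*$ of Moy--Prasad quotients against generic characters. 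Thus only the strata with $w \in \cS(\bbT_r,\bbM_r)$ (so $\bbK_w$ contributes only roots cancelling against $\bbM_r$) survive, and summing their contributions gives exactly the right-hand side.

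The main obstacle I anticipate is the second step: carefully setting up the stratified fiber bundle structure of $Z_w$ over the smaller Deligne--Lusztig variety $X_{{}^w\bbT_r, {}^w\bbB_r\cap\bbM_r}^{\bbM_r}$ and verifying, \emph{with the non-$\sigma$-stability of $\bbN_r$ and $\bbU_r$}, that the relevant maps are iterated affine fibrations. In the $r=0$ case \cite{DL83,DM20} one uses that $\bfU \cap \dot w\bfN^-\dot w^{-1}$ is a connected unipotent group on which Lang is surjective and the quotient contributes trivial cohomology; in the parahoric case one must instead work with the Moy--Prasad graded pieces and control how $\sigma$ moves $\bbU_r$ around, exactly as in \cite{CI21-RT} --- keeping track of the $\FF_{q^n}$-structures and the twisted Frobenius is the technical heart of the argument. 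I expect the genericity-vanishing step (third step) to be comparatively routine once phrased correctly, since it is a direct adaptation of the argument already present in \cite{Lus04,CI21-RT}.
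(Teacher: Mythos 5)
Your skeleton matches the paper's: the composite is computed on the quotient $\Sigma \cong X_{\bbT_r,\bbB_r}^{\bbG_r} \times_{\bbG_r^\sigma} X_{\bbM_r,\bbP_r}^{\bbG_r}$, one stratifies by the relative position of $g^{-1}g'$ in the generalized Bruhat decomposition $\bbG_r = \bigsqcup_w \bbG_{r,w}$, and the two tools are a Lusztig-style vanishing via genericity and the Deligne--Lusztig torus-case argument. However, there is a genuine gap in your second step, and the division of labor between the two mechanisms is misassigned. Your claim that each ``good'' stratum $Z_w$ (for $w$ with a rational representative in $\cS(\bbT_r,\bbM_r)^\sigma$) fibers over $X_{{}^w\bbT_r,{}^w\bbB_r\cap\bbM_r}^{\bbM_r}$ with affine-space fibers coming from $\bbN_r$, $\bbU_r$ and $\bbK_w$ is false for $r>0$: if it were true, the good strata would contribute the Mackey terms with no hypothesis on $\rho$, and since your step 3 kills the bad strata, you would obtain the Mackey formula --- hence, taking $\bfM=\bfT$, the scalar product formula --- for arbitrary $\theta$, which is known to fail (the split-torus example in the introduction). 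Genericity must enter on the good strata as well, and this is exactly where the positive-depth phenomenon lives.

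The actual proof splits \emph{every} stratum $\widehat\Sigma_w$ (good or bad) according to whether the coordinate $z \in \bbK_{w,0+:r+}$ is trivial. The locus $z\neq 1$ is killed by $(\bfM,\bfG)$-genericity of $\rho$ in all strata (Proposition \ref{prop:Sigma'}): this is not quite a black-box quotation of \cite{Lus04,Sta09,CI21-RT}, since one must fiber over $\bbM_0$, build an action of the group $\cH_{\bar\mu}=\{m\in\bbM_{r:r+}: m\sigma(m)^{-1}\in \mu^{-1}\dot w^{-1}\bbT^\alpha_{r:r+}\dot w\mu\}$ depending on $\mu$, and run the norm-map argument for roots $\alpha$ with $\dot w^{-1}\alpha\notin\Phi(\bfM,\bfT)$. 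The locus $z=1$ is handled for all $w$ by the Deligne--Lusztig fixed-point formula applied to a torus $\bar H_w^0$ inside the image of $\{(t,m)\in\bbT_r\times Z(\bbM_r): t^{-1}\sigma(t)=\sigma(\dot w)m^{-1}\sigma(m)\sigma(\dot w)^{-1}\}$ in $\bbT_0\times Z(\bbM_0)$: its fixed points are empty when $w$ has no representative in $\cS(\bbT_r,\bbM_r)^\sigma$ --- so the bad strata are killed by this fixed-point argument, not by genericity (consistent with the $r=0$ case, where no genericity is needed at all) --- and otherwise the fixed locus maps to $X_{{}^w\bbT_r,{}^w\bbB_r\cap\bbM_r}^{\bbM_r}$ with affine fibers $\bbU_r\cap{}^w\bbM_r$, yielding the $w$-summand. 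So your proposal needs both a finer decomposition within each stratum and the torus fixed-point step on the $z=1$ part; as written, the affine-fibration claim would prove a false statement.
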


In Section \ref{sec:scalar}, we will apply the following reformulation of Theorem \ref{thm:generic mackey}:

\begin{corollary}\label{cor:generic mackey}
  Let $\rho$ be a weakly $(\bfM,\bfG)$-generic representation of $\bbM_r^\sigma$ and let $\theta$ be any character of $\bbT_r^\sigma$. Then
  \begin{equation*}
    \langle R_{\bbT_r,\bbB_r}^{\bbG_r}(\theta), R_{\bbM_r,\bbP_r}^{\bbG_r}(\rho) \rangle_{\bbG_r^\sigma} = \sum_{w \in \bbT_r \backslash \cS(\bbT_r, \bbM_r)^\sigma/\bbM_r} \langle R_{\bbT_r,\bbB_r \cap {}^w \bbM_r}^{{}^w \bbM_r}(\theta), \ad(w^{-1})^* \rho \rangle_{{}^w \bbM_r^\sigma}.
  \end{equation*}
\end{corollary}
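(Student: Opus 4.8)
The plan is to deduce this purely formally from Theorem \ref{thm:generic mackey} by applying adjunction twice and tracking the $\sigma$-conjugacy indexing; no new geometric input is needed.

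First I would record the adjunction built into Definition \ref{def:induction}: for any $F$-rational parabolic pair $(\bbM_r', \bbP_r')$ in any $\bbG_r'$ one has $\langle R_{\bbM_r', \bbP_r'}^{\bbG_r'}(\chi), \psi \rangle_{\bbG_r'^\sigma} = \langle \chi, {}^* R_{\bbM_r', \bbP_r'}^{\bbG_r'}(\psi) \rangle_{\bbM_r'^\sigma}$, since both sides unwind (up to the conjugate-symmetry of the Hermitian pairing, which is harmless for this identity) to $\tfrac{1}{|\bbG_r'^\sigma||\bbM_r'^\sigma|}\sum_{g,m} \tr((g,m); H_c^*(X_{\bbM_r',\bbP_r'}^{\bbG_r'},\overline\QQ_\ell)) \overline{\chi(m)}\,\overline{\psi(g)}$. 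Applying this with $(\bbM_r',\bbP_r') = (\bbT_r,\bbB_r)$, $\bbG_r' = \bbG_r$, $\chi = \theta$, and $\psi = R_{\bbM_r,\bbP_r}^{\bbG_r}(\rho)$ gives
\[
  \langle R_{\bbT_r,\bbB_r}^{\bbG_r}(\theta), R_{\bbM_r,\bbP_r}^{\bbG_r}(\rho) \rangle_{\bbG_r^\sigma} = \langle \theta,\ {}^*R_{\bbT_r,\bbB_r}^{\bbG_r}\circ R_{\bbM_r,\bbP_r}^{\bbG_r}(\rho) \rangle_{\bbT_r^\sigma}.
\]
Since $\rho$ is $(\bfM,\bfG)$-generic, Theorem \ref{thm:generic mackey} applies to the second argument, rewriting it as $\sum_{w} {}^*R_{{}^w\bbT_r,{}^w\bbB_r\cap\bbM_r}^{\bbM_r}(\ad(w^{-1})^*\rho)$; bilinearity of the pairing turns the right-hand side into a sum of terms $\langle \theta,\ {}^*R_{{}^w\bbT_r,{}^w\bbB_r\cap\bbM_r}^{\bbM_r}(\ad(w^{-1})^*\rho) \rangle$.

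Next I would apply the adjunction a second time, now inside each Levi. Conjugating by $w$ identifies the torus datum appearing in the $w$-summand with one of the form $(\bbT_r \subseteq \bbB_r\cap {}^w\bbM_r)$ inside ${}^w\bbM_r$ and transports the twist $\ad(w^{-1})^*\rho$ and the functor accordingly; after re-indexing $w \mapsto w^{-1}$ (a bijection of $\sigma$-fixed double cosets) the sum runs over $w \in \bbT_r\backslash\cS(\bbT_r,\bbM_r)^\sigma/\bbM_r$, and the adjunction for the pair $(\bbT_r\subseteq \bbB_r\cap{}^w\bbM_r)$ in ${}^w\bbM_r$ converts each summand $\langle \theta,\ {}^*R_{\bbT_r,\bbB_r\cap{}^w\bbM_r}^{{}^w\bbM_r}(\ad(w^{-1})^*\rho) \rangle_{\bbT_r^\sigma}$ into $\langle R_{\bbT_r,\bbB_r\cap{}^w\bbM_r}^{{}^w\bbM_r}(\theta),\ \ad(w^{-1})^*\rho \rangle_{{}^w\bbM_r^\sigma}$, which is the asserted formula.

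The only point requiring genuine care — and the place I would expect to spend the most effort writing out — is the middle bookkeeping: reconciling the index set $\bbT_r^\sigma\backslash\cS(\bbT_r,\bbM_r)^\sigma/\bbM_r^\sigma$ of Theorem \ref{thm:generic mackey} with $\bbT_r\backslash\cS(\bbT_r,\bbM_r)^\sigma/\bbM_r$ of the corollary, choosing $\sigma$-stable (or appropriately twisted) representatives so that all groups, parabolics, and Frobenii are defined over the right finite field, and verifying that the $\ad(w)$-twists of the functors and of $\rho$ are compatible with the Lang-isogeny normalization implicit in Definition \ref{def:induction}. This is routine Deligne--Lusztig bookkeeping; I do not anticipate any real difficulty, and everything outside it is a mechanical use of adjunction and bilinearity.
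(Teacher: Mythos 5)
Your proposal is correct and is essentially the paper's own argument: the paper proves Corollary \ref{cor:generic mackey} by exactly the same two applications of adjointness sandwiching Theorem \ref{thm:generic mackey}, with the $w$-conjugation/index-set reconciliation handled implicitly just as you describe.
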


\begin{proof}
  Hence for any weakly $(\bfM,\bfG)$-generic representation $\rho$ of $\bbM_r^\sigma$ and any character $\theta$ of $\bbT_r^\sigma$, we have
  \begin{align*}
    \langle R_{\bbT_r,\bbB_r}^{\bbG_r}(\theta), R_{\bbM_r,\bbP_r}^{\bbG_r}(\rho) \rangle_{\bbG_r^\sigma} 
    &= \langle \theta, {}^* R_{\bbT_r,\bbB_r}^{\bbG_r}(R_{\bbM_r,\bbP_r}^{\bbG_r}(\rho)) \rangle_{\bbT_r^\sigma} \\
    &= \sum_{w \in \bbT_r \backslash \cS(\bbT_r, \bbM_r)^\sigma/\bbM_r}\langle \theta, {}^* R_{\bbT_r,\bbB_r \cap {}^w \bbM_r}^{{}^w \bbM_r}(\ad(w^{-1})^*(\rho)) \rangle_{\bbT_r^\sigma} \\
    &= \sum_{w \in \bbT_r \backslash \cS(\bbT_r, \bbM_r)^\sigma/\bbM_r} \langle R_{\bbT_r, \bbB_r \cap {}^w \bbM_r}^{{}^w \bbM_r}(\theta), \ad(w^{-1})^* \rho \rangle_{{}^w \bbM_r^\sigma},
  \end{align*}
  where the first and third equalities hold by adjointness and the second equality holds by Theorem \ref{thm:generic mackey}.
\end{proof}

We will prove Theorem \ref{thm:generic mackey} over the course of the next three subsections, culminating with Section \ref{subsec:generic mackey proof}. The calculation proceeds by analyzing the cohomology of the fiber product $X_{\bbT_r, \bbB_r}^{\bbG_r} \times_{\bbG_r^\sigma} X_{\bbM_r, \bbP_r}^{\bbG_r}$. We have an isomorphism
\begin{align*}
  X_{\bbT_r, \bbB_r}^{\bbG_r} \times_{\bbG_r^\sigma} X_{\bbM_r, \bbP_r}^{\bbG_r} &\to \{(x,x',y) \in \sigma(\bbU_r) \times \sigma(\bbN_r) \times \bbG_r : x \sigma(y) = yx'\} \equalscolon \Sigma, \\
  (g,g') &\mapsto (g^{-1}\sigma(g), g'{}^{-1}\sigma(g'), g^{-1} g'), 
\end{align*}
where $\bfU$ is the unipotent radical of $\bfB$ and $\bfN$ is the unipotent radical of $\bfP$. Note that this isomorphism is $(\bbT_r^\sigma \times \bbM_r^\sigma)$-equivariant with respect to the action on $\Sigma$ given by
\begin{equation*}
  (t,m) \from (x,x',y) \mapsto (txt^{-1}, mx'm^{-1}, tym^{-1}).
\end{equation*}
For each double coset $w \in \bbT_r(\overline \FF_q) \backslash \cS(\bbT_r, \bbM_r)/\bbM_r(\overline \FF_q)$, set
\begin{equation*}
  \Sigma_w \colonequals \{(x,x',y) \in \Sigma : y \in \bbG_{r,w}\}.
\end{equation*}
It is clear that each $\Sigma_w$ is $(\bbT_r^\sigma \times \bbM_r^\sigma)$-stable.

\begin{lemma}\label{lem:Sigma hat}
  The cohomology of
  \begin{equation*}
    \widehat \Sigma_w \colonequals \{(x,x',u,u',z,\mu) \in \sigma(\bbU_r) \times \sigma(\bbN_r) \times \bbU_r \times \bbN_r \times \bbK_{w,0+:r+} \times \bbM_r : x \sigma(z \dot w \mu) = u z \dot w \mu u' x'\}
  \end{equation*}
  is isomorphic as a $(\bbT_r^\sigma \times \bbM_r^\sigma)$-module to that of $\Sigma_w$. This isomorphism is induced by the affine fibration $\widehat \Sigma_w \to \Sigma_w$ given by composing the isomorphism
  \begin{equation*}
    (x,x',u,u',z,\mu) \mapsto (x\sigma(u)^{-1}, x'\sigma(u'), u,u',z,\mu)   
  \end{equation*}
  with the affine fibration
  \begin{equation*}
    (x,x',u,u',z,\mu) \mapsto (x,x',uz \dot w \mu u').
  \end{equation*}
  Both these maps are $(\bbT_r^\sigma \times \bbM_r^\sigma)$-equivariant, where the action on $\widehat \Sigma_w$ is given by
  \begin{equation}\label{eq:action}
    (m,t) \from (x,x',u,u',z,\mu) \mapsto (txt^{-1}, mx'm^{-1}, tut^{-1}, mu'm^{-1}, tzt^{-1}, \dot w^{-1} t \dot w \mu m^{-1}).
  \end{equation}
\end{lemma}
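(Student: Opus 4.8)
The plan is to factor the map $\widehat\Sigma_w\to\Sigma_w$ exactly as the statement prescribes, check that the first factor is an isomorphism and the second an affine fibration, verify both are $(\bbT_r^\sigma\times\bbM_r^\sigma)$-equivariant, and conclude by the fact that an affine fibration induces an isomorphism on $H_c^*$ compatibly with any acting finite group (the cohomological shift is even and the Tate twist is invisible to a finite-group module). Concretely, I would first introduce the intermediate variety
\begin{equation*}
  \widehat\Sigma_w'\colonequals\bigl\{(X,X',u,u',z,\mu)\in\sigma(\bbU_r)\times\sigma(\bbN_r)\times\bbU_r\times\bbN_r\times\bbK_{w,0+:r+}\times\bbM_r: X\sigma(y)=yX',\ y\colonequals uz\dot w\mu u'\bigr\}.
\end{equation*}
The assignment $(x,x',u,u',z,\mu)\mapsto(x\sigma(u)^{-1},x'\sigma(u'),u,u',z,\mu)$ is a morphism of the ambient products with obvious inverse $(X,X',u,u',z,\mu)\mapsto(X\sigma(u),X'\sigma(u')^{-1},u,u',z,\mu)$, and the identity $X\sigma(y)=x\sigma(u)^{-1}\sigma(u)\sigma(z\dot w\mu)\sigma(u')=x\sigma(z\dot w\mu)\sigma(u')$ together with $yX'=uz\dot w\mu u'x'\sigma(u')$ shows it identifies the equation $x\sigma(z\dot w\mu)=uz\dot w\mu u'x'$ of $\widehat\Sigma_w$ with the equation of $\widehat\Sigma_w'$; hence it is an isomorphism $\widehat\Sigma_w\xrightarrow{\sim}\widehat\Sigma_w'$. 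Next, the projection $\widehat\Sigma_w'\to\Sigma_w$, $(X,X',u,u',z,\mu)\mapsto(X,X',uz\dot w\mu u')$, is precisely the base change along $\Sigma_w\to\bbG_{r,w}$, $(X,X',y)\mapsto y$, of the multiplication morphism
\begin{equation*}
  \psi\colon\bbU_r\times\bbN_r\times\bbK_{w,0+:r+}\times\bbM_r\longrightarrow\bbG_{r,w},\qquad(u,u',z,\mu)\mapsto uz\dot w\mu u',
\end{equation*}
so it is an affine fibration as soon as $\psi$ is one.

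Thus the heart of the argument is that $\psi$ is an affine fibration onto $\bbG_{r,w}$. Here I would lean on the generalized Bruhat decomposition recalled just before the lemma: since $\bbT_r$ normalizes $\bbK_{w,0+:r+}$ and $\dot w^{-1}\bbT_r\dot w\subseteq\bbM_r$ (because $w\in\cS(\bbT_r,\bbM_r)$), one has $\bbB_r\bbK_{w,0+:r+}\dot w\bbP_r=\bbU_r\bbK_{w,0+:r+}\dot w\bbM_r\bbN_r$, which is surjectivity of $\psi$. For the fibers I would produce a $w$-adapted product decomposition of the source: split $\bbU_r$ (resp.\ $\bbK_{w,0+:r+}$, $\bbN_r$) into the product of the root subgroups whose $\dot w$-conjugate lands in $\dot w\bbP_r\dot w^{-1}$ and the complementary part, tracking Moy--Prasad levels at $\x$; after this change of coordinates $\psi$ becomes a projection forgetting the redundant factors, hence Zariski-locally trivial with fibers a fixed affine space (possibly a point). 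Finally I would record equivariance for the action \eqref{eq:action}: for the first map it suffices that $t,m$ are $\sigma$-fixed, so $\sigma(tut^{-1})=t\sigma(u)t^{-1}$ and $\sigma(mu'm^{-1})=m\sigma(u')m^{-1}$; for $\psi$ (hence for the second map), the twist $\mu\mapsto\dot w^{-1}t\dot w\mu m^{-1}$ is arranged precisely so that $t(uz\dot w\mu u')m^{-1}=(tut^{-1})(tzt^{-1})\dot w(\dot w^{-1}t\dot w\mu m^{-1})(mu'm^{-1})$, again using $w\in\cS(\bbT_r,\bbM_r)$ to make sense of $\dot w^{-1}t\dot w$ inside $\bbM_r$. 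With equivariance in hand, the two maps give the claimed $(\bbT_r^\sigma\times\bbM_r^\sigma)$-module isomorphism $H_c^*(\widehat\Sigma_w)\cong H_c^*(\Sigma_w)$.

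The main obstacle is the root-subgroup bookkeeping in the middle step: identifying, uniformly in $w$, precisely which root subgroups and Moy--Prasad levels account for the non-uniqueness of the factorization $g=uz\dot w\mu u'$, and confirming that this yields a genuine Zariski-locally trivial affine fibration rather than merely a surjection with affine fibers. By contrast, the Frobenius $\sigma$ causes no difficulty here, since it enters only through the coordinates $x,x'$ valued in $\sigma(\bbU_r),\sigma(\bbN_r)$ and the factor $\sigma(y)$, all of which transform compatibly under both the coordinate change and the group action.
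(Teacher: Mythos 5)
Your proposal is correct and is essentially the argument the lemma statement itself prescribes (the paper leaves the proof implicit): the coordinate change $(x,x')\mapsto(x\sigma(u)^{-1},x'\sigma(u'))$ is an isomorphism, the remaining map is the base change along $\Sigma_w\to\bbG_{r,w}$ of the multiplication map $(u,u',z,\mu)\mapsto uz\dot w\mu u'$, and the root-subgroup/Moy--Prasad bookkeeping you flag as the remaining step is exactly what is supplied by the decompositions recalled just before the lemma from \cite{Lus04} and \cite[Section 3.5]{CI21-RT}, so it is not a genuine gap; your equivariance checks also match \eqref{eq:action}. One cosmetic point: rename your intermediate variety, since $\widehat\Sigma_w'$ is already used in the paper for the locus $z\neq 1$ of $\widehat\Sigma_w$.
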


Define
\begin{align*}
  \widehat \Sigma_w' &\colonequals \{(x,x',u,u',z,\mu) \in \widehat \Sigma_w : z \neq 1\}, \\
  \widehat \Sigma_w'' &\colonequals \{(x,x',u,u',z,\mu) \in \widehat \Sigma_w : z = 1\}.
\end{align*}
Theorem \ref{thm:generic mackey} will follow as a corollary (see Section \ref{subsec:generic mackey proof}) after we show that the cohomology of $\widehat \Sigma_w'$ does not contribute to the generic Mackey formula (Proposition \ref{prop:Sigma'}, proved in Section \ref{subsec:Sigma' proof}) and the cohomology of $\widehat \Sigma_w''$ is equal to the $w$-summand on the right-hand side of the Mackey formula (Proposition \ref{prop:Sigma''}, proved in Section \ref{subsec:Sigma'' proof}).

\begin{proposition}\label{prop:Sigma'}
  Let $\psi \from \bbM_{r:r+}^\sigma \to \overline \QQ_\ell^\times$ be weakly $(\bfM,\bfG)$-generic. If $w$ has a representative in $\cS(\bbM_r, \bbT_r)^\sigma$ and for all $i \geq 0$,
  \begin{equation*}
    H_c^i(\widehat \Sigma_w', \overline \QQ_\ell)_{(\psi)} = 0,
  \end{equation*}
  where $H_c^i(\widehat \Sigma_w', \overline \QQ_\ell)_{(\psi)}$ is the subspace on which $\bbM_{r:r+}^\sigma$ acts by $\psi$.
\end{proposition}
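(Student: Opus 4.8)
\emph{First, reduce to leading-term strata.} Using the hypothesis, fix a $\sigma$-stable representative $\dot w$ of $w$, so that $\bfK_w = \bfU^- \cap \dot w\bfN^-\dot w^{-1}$ and the root-space decompositions of the Moy--Prasad graded pieces $\bbK_{w,i:i+}$ acquire genuine $\FF_{q^n}$-rational structures; note that every root $\alpha$ occurring in $\bbK_{w,0+:r+}$ satisfies $\dot w^{-1}\alpha \in \Phi(\bfN^-) \subseteq \Phi(\bfG,\bfT) \smallsetminus \Phi(\bfM,\bfT)$. From \eqref{eq:action} with $t = 1$, the subgroup $\bbM_{r:r+}^\sigma$ on which $\psi$ is defined — abelian and central in $\bbM_r^\sigma$ — acts on $\widehat\Sigma_w'$ by right translation on the coordinate $\mu$ and trivially on $x$, $u$, $z$. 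Fixing a total order on $\Phi(\bfK_w)$, let $S_{i,\alpha} \subseteq \widehat\Sigma_w'$ be the locally closed locus where $z \in \bbK_{w,i:r+}$, its image $\bar z$ in $\bbK_{w,i:i+}$ is nonzero, and $\alpha$ is the largest root of $\bfK_w$ with nonzero $\bar z$-component. Each $S_{i,\alpha}$ is $\bbM_{r:r+}^\sigma$-stable (since $\bbM_{r:r+}^\sigma$ fixes $z$), so by the excision long exact sequences for $H_c^*$ it is enough to prove $H_c^*(S_{i,\alpha}, \overline\QQ_\ell)_{(\psi)} = 0$ for each of these finitely many strata. Put $\alpha_0 \colonequals \dot w^{-1}\alpha$ and let $H \subseteq \bbT_{r:r+}^\sigma \subseteq \bbM_{r:r+}^\sigma$ be the image of $N_{E/F}(\alpha_0^\vee(E_r^\times))$; since $\alpha_0 \notin \Phi(\bfM,\bfT)$, genericity of $\psi$ gives $\psi|_H \neq \triv$.

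\emph{Core of the argument.} It suffices to show that $H$ acts trivially on $H_c^*(S_{i,\alpha}, \overline\QQ_\ell)$: since $\bbM_{r:r+}^\sigma$ is abelian, every constituent of this module is then a character trivial on $H$, so $\psi$ — which is not trivial on $H$ — does not occur, and its isotypic part vanishes. To prove the triviality of the $H$-action on cohomology, the plan is to realize the $H$-action on $S_{i,\alpha}$ as the restriction of an action of the additive group $\bbG_a$ (equivalently: to produce an $H$-equivariant fibration $S_{i,\alpha} \to \bbA^1$, with constant fibers, along which $\bbG_a \supseteq H$ acts by translation); since $\bbG_a$ is connected it then acts trivially on $H_c^*$, and hence so does $H$. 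The subtlety is that the $H$-action $\mu \mapsto \mu m^{-1}$ by a central depth-$r$ element $m \in H$ does \emph{not} extend to an action of the algebraic group $\bbT_{r:r+}$: preserving the defining relation $x\sigma(z\dot w\mu) = uz\dot w\mu u'x'$ under $\mu \mapsto \mu m^{-1}$ forces $\sigma(m) = m$. This is exactly where the hypothesis $z \neq 1$ enters. The presence of the nonzero leading term $\bar z$ leaves the coordinate of $\mu$ in the $\alpha_0$-direction at depth $r$ \emph{unconstrained} by the defining relation; solving that relation for this coordinate in terms of the remaining data and (a lift of) $\bar z$ trades the $\sigma$-rigidity for a genuine affine line of freedom, along which $H$ acts by translation, and these lines assemble to the desired $\bbG_a$-action on $S_{i,\alpha}$. (Consistently, on the complementary locus $\widehat\Sigma_w''$ where $z = 1$ this very coordinate is constrained, which is why $\widehat\Sigma_w''$ does contribute the $w$-summand of the Mackey formula.)

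\emph{The main obstacle.} The substantive step is the last one: making precise, inside the non-abelian groups $\bbU_r$, $\bbN_r$, $\bbK_w$, how the leading term of $z$ propagates through the product $uz\dot w\mu u'x'$ and through $\sigma(z\dot w\mu)$ at depth $r$, and extracting a clean $\bbG_a$-action from this. It is a simultaneous Bruhat-stratification and Moy--Prasad-filtration computation with no classical shadow: when $r = 0$ one has $\bbK_{w,0+:r+} = \{1\}$ and $\widehat\Sigma_w' = \emptyset$, so the mechanism is genuinely positive-depth (compare Remark~\ref{rem:green insufficient}). I expect to carry it out by descending induction on the depth $i$ of the leading term, in the spirit of the cohomology-vanishing arguments of \cite{Lus04,Sta09,CI21-RT}; this is the bulk of Section~\ref{subsec:Sigma' proof}.
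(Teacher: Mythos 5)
Your overall skeleton matches the paper's: stratify $\widehat\Sigma_w'$ by the leading term of $z$, pick a root direction of $\bfK_w$ occurring in that leading term (whose $\dot w^{-1}$-translate lies outside $\Phi(\bfM,\bfT)$, so genericity of $\psi$ bites there), and kill the $\psi$-isotypic cohomology by exhibiting a finite subgroup of $\bbM_{r:r+}^\sigma$ on which $\psi$ is nontrivial inside a \emph{connected} algebraic group acting on the stratum. But the step you yourself flag as ``the main obstacle'' is not a technical afterthought --- it is the entire content of the paper's proof, and your sketched mechanism for it is not the one that works. There is no ``unconstrained $\alpha_0$-coordinate of $\mu$ at depth $r$'' (note $\alpha_0=\dot w^{-1}\alpha\notin\Phi(\bfM,\bfT)$, so $\mu\in\bbM_r$ has no such root coordinate), and the connected group one obtains is not a $\bbG_a$ translating $\mu$. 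What the paper does (Lemma \ref{lem:w mu action}) is: for $-\alpha$ occurring in the leading term of $z$ at depth $a$, the commutator $[\xi^{-1},z^{-1}]$ with $\xi\in\bbU_{\alpha,r-a:r+}$ produces a component $\tau_{\xi,z}\in\bbT^\alpha_{r:r+}$, and the resulting map $\lambda_z$ admits a section $s_z$; using this one defines, via \eqref{eq:m action}, an action of the twisted-Lang-type group $\cH_{\bar\mu}=\{m\in\bbM_{r:r+}: m\sigma(m)^{-1}\in\mu^{-1}\dot w^{-1}\bbT^\alpha_{r:r+}\dot w\mu\}$ which simultaneously modifies $x$ (by $\sigma(\xi)$), $x'$, $u'$ and $\mu$ --- the point being that the $\bbT^\alpha$-output of the commutator with $z\neq 1$ compensates exactly for the failure of $\sigma$-invariance of $m$. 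One must also fiber over $\bar\mu\in\bbM_0$, since $\cH_{\bar\mu}$ depends on $\bar\mu$, and then use the norm map $\cN_\sigma^{\sigma^n}$ (Lemma \ref{lem:H^0}) to land a finite subgroup of $\bbM_{r:r+}^\sigma$ inside $\cH^0_{\bar\mu}$; it is on this $\mu$- and $w$-conjugated norm image, not on your fixed $H=\mathrm{im}\,N_{E/F}(\alpha_0^\vee(E_r^\times))\subset\bbT_{r:r+}^\sigma$, that genericity of $\psi$ is invoked. None of this appears in your proposal, so as it stands the argument has a genuine gap at its core rather than a deferred routine verification.

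A secondary caution: your stratification by ``the largest root (in a fixed total order) with nonzero $\bar z$-component'' is harmless for $\bbM_{r:r+}^\sigma$-stability (that group fixes $z$), but whatever connected-group action you eventually construct must also preserve each stratum; the paper sidesteps this by using the stratification $\bbK_{w,r}^{a,I}$ of \cite{CI21-RT} indexed by the full set $I$ of roots in the leading term, and by an action that leaves the $z$-coordinate untouched. If you insist on the single-largest-root indexing, you should check this compatibility explicitly once the action is in hand.
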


\begin{proposition}\label{prop:Sigma''}
  If $w$ has a representative in $\cS(\bbT_r, \bbM_r)^\sigma$, then we have isomorphisms of virtual $(\bbT_r^\sigma \times \bbM_r^\sigma)$-representations
  \begin{equation*}
    \sum_{i \geq 0} (-1)^i H_c^i(\widehat \Sigma_w'', \overline \QQ_\ell) \cong \sum_{i \geq 0} (-1)^i H_c^i(X_{{}^w \bbT_r, {}^w \bbB_r \cap \bbM_r}^{\bbM_r}, \overline \QQ_\ell).
  \end{equation*}
  where $\bbM_r^\sigma$ acts on $X_{\bbT_r, \bbB_r \cap {}^w\bbM_r}^{{}^w \bbM_r}$ through $\ad(w) \from \bbM_r^\sigma \to {}^w \bbM_r^\sigma$.
  If $w$ does not have a representative in $\cS(\bbT_r, \bbM_r)^\sigma$, then $\sum_{i \geq 0} (-1)^i H_c^i(\widehat \Sigma_w'', \overline \QQ_\ell) = 0$.
\end{proposition}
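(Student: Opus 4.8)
The plan is to compute $H_c^*(\widehat\Sigma_w'')$ directly from its defining equation, which at $z=1$ reads $x\,\sigma(\dot w\mu)=u\,\dot w\mu\,u'x'$, and to show that this variety is, up to affine fibrations, the parahoric Deligne--Lusztig variety $X_{{}^w\bbT_r,\,{}^w\bbB_r\cap\bbM_r}^{\bbM_r}$. \emph{Step 1 (reduce to $\sigma$-stable double cosets).} I would first observe that the double coset $\bbT_r\dot w\bbM_r$ admits a $\sigma$-fixed representative if and only if it is $\sigma$-stable, i.e.\ $\sigma(\dot w)\in\bbT_r\dot w\bbM_r$: given $\sigma(\dot w)=t\dot w m$, surjectivity of the Lang maps $t_0\mapsto t_0^{-1}\sigma(t_0)$ on $\bbT_r$ and $m_0\mapsto m_0\sigma(m_0)^{-1}$ on $\bbM_r$ lets one replace $\dot w$ by the $\sigma$-fixed element $t_0\dot w m_0$. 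Next, for $(x,x',u,u',1,\mu)\in\widehat\Sigma_w''$, rewrite the equation as $\sigma(\dot w)=(x^{-1}u)\,\dot w\,(\mu\,u'x'\,\sigma(\mu)^{-1})$; using that $\bfM$ is $F$-rational, so that $\sigma(\bfP)$ is again a parabolic with Levi $\bfM$ and $\bbM_r$ normalizes both $\bbN_r$ and $\sigma(\bbN_r)$, together with the generalized Bruhat decomposition $\bbG_r=\bigsqcup_w\bbU_r\dot w\bbM_r\bbN_r$, I expect to conclude $\sigma(\dot w)\in\bbT_r\dot w\bbM_r$ --- hence $\widehat\Sigma_w''=\emptyset$ whenever $w$ is not $\sigma$-stable, which is the second assertion. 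The point requiring care here is precisely that $\sigma$ need not preserve $\bbB_r$ or $\bbN_r$.

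\emph{Step 2 (peel off affine factors).} Assume now $\dot w=\sigma(\dot w)$ and conjugate the equation by $\dot w$; it becomes an identity among variables lying in the conjugate unipotent groups $\dot w^{-1}\bbU_r\dot w$, $\sigma(\dot w^{-1}\bbU_r\dot w)$, $\bbN_r$, $\sigma(\bbN_r)$ and in $\bbM_r$. Because $\dot w\in\cS(\bbT_r,\bbM_r)$ we have $\dot w^{-1}\bbT_r\dot w\subset\bbM_r$, so $\dot w^{-1}\bbB_r\dot w\cap\bbM_r={}^w\bbB_r\cap\bbM_r$ is a Borel of $\bbM_r$ with unipotent radical $\dot w^{-1}\bbU_r\dot w\cap\bbM_r$, whereas the complementary factors of $\dot w^{-1}\bbU_r\dot w$ and of $\bbN_r$ relative to the parabolic $\bbP_r=\bbM_r\bbN_r$ are governed by the variables $x,x',u,u'$. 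Using surjectivity of the relevant Lang maps on these perfected smooth unipotent group schemes --- the same device used in Lemmas~\ref{lem:pInd factor}, \ref{lem:invariants} and~\ref{lem:Sigma hat} --- I would exhibit $\widehat\Sigma_w''$ as an iterated affine fibration over the locus cut out inside $\bbM_r$ by the $\bbM_r$-component of the equation, which is exactly $X_{{}^w\bbT_r,\,{}^w\bbB_r\cap\bbM_r}^{\bbM_r}$. Since an equivariant affine fibration induces an isomorphism on $H_c^*$ up to a degree shift and a Tate twist, this gives the claimed identity of virtual representations.

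\emph{Step 3 (equivariance).} Finally I would propagate the $(\bbT_r^\sigma\times\bbM_r^\sigma)$-action \eqref{eq:action} through the affine fibrations of Step~2 and check that it becomes the action on $X_{{}^w\bbT_r,\,{}^w\bbB_r\cap\bbM_r}^{\bbM_r}$ in which $\bbM_r^\sigma$ acts naturally and $\bbT_r^\sigma$ acts through $t\mapsto\dot w^{-1}t\dot w\in({}^w\bbT_r)^\sigma$ --- equivalently, after transporting along $\ad(w)$, the action on $X_{\bbT_r,\,\bbB_r\cap{}^w\bbM_r}^{{}^w\bbM_r}$ in which $\bbM_r^\sigma$ acts via $\ad(w)$. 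I expect this bookkeeping to be the main obstacle: the auxiliary solutions of the Lang equations in Step~2 have to be chosen compatibly equivariantly, and one must keep track of how $\ad(w)$ interacts with the twisted $\sigma$-action on each factor. The geometric reduction itself is routine once $\dot w^{-1}\bbU_r\dot w$ has been decomposed along $\bbP_r$; the two delicate points are the emptiness claim of Step~1 and the equivariance matching of Step~3.
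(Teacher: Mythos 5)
Your Step 1 is where the argument breaks. From a point of $\widehat\Sigma_w''$ you only learn that $\sigma(\dot w\mu)\in \sigma(\bbU_r)\,\bbU_r\,\dot w\,\bbM_r\,\bbN_r\,\sigma(\bbN_r)$, and the factors $\sigma(\bbU_r)$ and $\sigma(\bbN_r)$ are not adapted to the decomposition $\bbG_r=\bigsqcup_w \bbU_r\dot w\bbM_r\bbN_r$ (precisely because $\sigma$ need not preserve $\bbB_r$ or $\bbP_r$), so you cannot conclude $\sigma(\dot w)\in\bbT_r\dot w\bbM_r$. In fact the emptiness claim is false: take $\bfG=\SL_2$, $\bfM$ the split diagonal torus with $\bfP$ the standard Borel, and $\bfT={}^g\bfM$ the unramified elliptic torus with $g^{-1}\sigma(g)=\dot s$. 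Then neither of the two double cosets in $\bbT_r\backslash\cS(\bbT_r,\bbM_r)/\bbM_r$ is $\sigma$-stable, yet for $\dot w=g\dot s$ the set $\widehat\Sigma_w''$ is nonempty at every level $r$: conjugating the defining equation by $g$ and using Lang surjectivity on $\bbM_r$, it reduces to an identity $-\bar u^{-1}\bar x=\dot s\,v\,t$ with $\bar u,v$ upper unitriangular, $\bar x$ lower unitriangular, $t=\diag(s,s^{-1})$, which is solved exactly by taking the off-diagonal entries of $\bar u^{-1}$, $\bar x$, $v$ to be $-s^{-1}$, $s$, $s$. So the vanishing of $\sum_i(-1)^iH_c^i(\widehat\Sigma_w'')$ for non-$\sigma$-stable $w$ is a genuinely cohomological cancellation, not a statement about points, and no Bruhat-type membership argument can give it.

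The missing idea, which is how the paper proceeds, is an auxiliary torus action plus the Deligne--Lusztig fixed-point formula: one checks that \eqref{eq:action} also defines an action of $H_w=\{(t,m)\in\bbT_r\times Z(\bbM_r):t^{-1}\sigma(t)=\sigma(\dot w)m^{-1}\sigma(m)\sigma(\dot w)^{-1}\}$ on $\widehat\Sigma_w''$, passes to the torus $\bar H_w^0\subset\bbT_0\times Z(\bbM_0)$, and replaces $\widehat\Sigma_w''$ by its $\bar H_w^0$-fixed locus at the level of virtual $(\bbT_r^\sigma\times\bbM_r^\sigma)$-representations. On the fixed locus the excess variables collapse (fixed points must centralize ${}^wZ(\bbM_0)^0$ resp.\ $Z(\bbM_0)^0$, forcing $x'=u'=1$ and $x,u\in{}^w\bbM_r$), and what remains is $S_w=\{(u,\mu)\in(\bbU_r\cap{}^w\bbM_r)\times{}^w\bbM_r:u\mu\sigma(\mu)^{-1}\in\sigma(\bbU_r)\}$, which is empty unless $w$ is $\sigma$-stable and otherwise maps to $X^{{}^w\bbM_r}_{\bbT_r,\bbB_r\cap{}^w\bbM_r}$ by $(u,\mu)\mapsto(u\mu)^{-1}$ as an explicit equivariant affine fibration. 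This bypasses both difficulties you yourself flag: your Step 2 asserts that all of $\widehat\Sigma_w''$ is an iterated affine fibration over the $\bbM_r$-variety, which is stronger than what is needed or proved and is not justified by "surjectivity of Lang maps" (the equation couples $x,u,x',u'$ with $\mu$ through the relative position of $\bbU_r$, $\sigma(\bbU_r)$ and ${}^w\bbP_r$, not through a Lang equation in the fiber direction); and your Step 3 worry about choosing Lang-sections equivariantly disappears, since the fixed-point formula is automatically equivariant for the commuting finite group action and the equivariance of the resulting map $S_w\to X^{{}^w\bbM_r}_{\bbT_r,\bbB_r\cap{}^w\bbM_r}$ is a one-line check. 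As written, your proposal proves neither assertion of the proposition.
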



\subsection{Proof of Proposition \ref{prop:Sigma'}}\label{subsec:Sigma' proof}

The proof is a natural generalization of the arguments in \cite{Lus04,Sta09,CI21-RT}. Following \cite[Section 3.5, esp.\ (3.7)]{CI21-RT}, we have a stratification into locally closed subsets
\begin{equation*}
  \bbK_{w,0+:r+} \smallsetminus \{1\} = \bigsqcup_{1 \leq a \leq r} \bigsqcup_{I \in \cX} \bbK_{w,r}^{a,I},
\end{equation*}
where $\cX$ is the set of nonempty subsets of $\{\beta \in \Phi(\bfG,\bfT) \smallsetminus \Phi(\bfM,\bfT) : \bbU_{\beta,r} \subset \bbK_{w,r}\}$ where $\bfU_\beta$ is the root subgroup of $\bfG$ corresponding to $\beta \in \Phi(\bfG,\bfT)$. By pulling back along the natural projection $\widehat \Sigma_w' \to \bbK_{w,0+:r+} \smallsetminus \{1\}$, we have an induced stratification
\begin{equation*}
  \widehat \Sigma_w' = \bigsqcup_{a,I} \widehat \Sigma_w^{\prime, a, I}.
\end{equation*}

Fix a pair $(a,I)$ with $1 \leq a \leq r$ and $I \in \cX$.  Consider the morphism
\begin{equation*}
  \widehat \Sigma_w^{\prime, a, I} \to \bbM_{0}, \qquad (x,x',u,u',z,\mu) \mapsto \mu \bbM_{0+:r+}.
\end{equation*}
Let $\widehat \Sigma_{w,\bar\mu}^{\prime, a, I}$ denote the fiber over $\bar \mu = \mu\bbM_{0+:r+} \in \bbM_{0}$.

\begin{lemma}\label{lem:w mu action}
  Let $\alpha \in \Phi(\bfG,\bfT)$ be such that $-\alpha \in I$. Then
  $\widehat \Sigma_{w,\bar \mu}^{\prime, a, I}$ has an action of the algebraic group
  \begin{equation*}
    \cH_{\bar \mu} \colonequals \{m \in \bbM_{r:r+} : m \sigma(m)^{-1} \in \mu^{-1} \dot w^{-1} \bbT_{r:r+}^\alpha \dot w \mu\}.
  \end{equation*}
\end{lemma}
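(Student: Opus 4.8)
The plan is to construct the $\cH_{\bar\mu}$-action by hand, following closely the template of the analogous arguments in \cite{Lus04}, \cite{Sta09}, and especially \cite[\S3.5]{CI21-RT}. First note that $\cH_{\bar\mu}$ really is an algebraic group: since $r>0$ the graded piece $\bbM_{r:r+}$ is a commutative algebraic group and $m\mapsto m\sigma(m)^{-1}$ is a homomorphism $\bbM_{r:r+}\to\bbM_{r:r+}$; moreover, as $w$ has a representative in $\cS(\bbT_r,\bbM_r)$ we have $\dot w^{-1}\bbT\dot w\subseteq\bbM$ and hence $\mu^{-1}\dot w^{-1}\bbT^\alpha_{r:r+}\dot w\mu\subseteq\bbM_{r:r+}$, so $\cH_{\bar\mu}$ is the preimage under this homomorphism of a closed subgroup (and it is independent of the chosen lift $\mu$ of $\bar\mu$, since conjugation by $\bbM_{0+:r+}$ is trivial on the level-$r$ subgroup $\dot w^{-1}\bbT^\alpha_{r:r+}\dot w$). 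In particular $1\in\bbT^\alpha_{r:r+}$ gives $\bbM^\sigma_{r:r+}\subseteq\cH_{\bar\mu}$, and the defining condition says exactly that for $m\in\cH_{\bar\mu}$ the element $t_m\colonequals\dot w\,\mu\,(m\sigma(m)^{-1})\,\mu^{-1}\dot w^{-1}$ lies in $\bbT^\alpha_{r:r+}$, with $t_m=1$ when $m$ is $\sigma$-fixed.

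I would then let $m\in\cH_{\bar\mu}$ act on $(x,x',u,u',z,\mu)\in\widehat\Sigma_{w,\bar\mu}^{\prime,a,I}$ by combining the translation $\mu\mapsto\mu m^{-1}$ with the conjugations $x'\mapsto mx'm^{-1}$ and $u'\mapsto mu'm^{-1}$ --- which by themselves satisfy the defining equation only up to an error lying in $\mu^{-1}\dot w^{-1}\bbT^\alpha_{r:r+}\dot w\mu$, precisely the subgroup appearing in $\cH_{\bar\mu}$ --- together with a compensating conjugation by $t_m$ on $x$ and $u$ and a modification of $z$ in the $\bbU_{-\alpha}$-direction at level $r$, the whole arranged so that this error is cancelled. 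The modification of $z$ is exactly where the hypothesis $-\alpha\in I$ is used: it ensures $\bbU_{-\alpha,r}\subseteq\bbK_{w,r}$, so the modified $z$ still lies in $\bbK_{w,0+:r+}$ and in fact remains in the stratum $\bbK_{w,r}^{a,I}$ (for $a<r$ the leading term at level $a$ is untouched; the case $a=r$ is checked directly). One also verifies that the assignment $m\mapsto(t_m,\;\text{$z$-correction})$ satisfies the cocycle identity needed for this to be a genuine left action --- routine because $\bbM_{r:r+}$ is abelian --- and that restricting to $m\in\bbM^\sigma_{r:r+}$ recovers the action of $\bbM^\sigma_{r:r+}$ obtained by restricting \eqref{eq:action}, since then $t_m=1$ and $z$ is unchanged.

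The main obstacle, and the bulk of the work, is to verify that the transformed tuple again satisfies $x\,\sigma(z\dot w\mu)=u\,z\,\dot w\,\mu\,u'\,x'$. After unwinding, this is an identity in the abelian graded piece $\bbG_{r:r+}$, on which conjugation and $\sigma$ act linearly; one must track how the level-$r$ error produced by $\mu\mapsto\mu m^{-1}$ --- a vector constrained to the line $\Ad(\mu^{-1}\dot w^{-1})\mathfrak{t}^\alpha_{r:r+}$ --- decomposes into its torus- and root-components after the relevant conjugations, and is matched component by component by the $t_m$-conjugations of $x,u$ and the $\bbU_{-\alpha}$-modification of $z$. This combinatorics of which root subspaces of $\bbK_{w,r}$ absorb the error --- and in particular why $\bbT^\alpha$, rather than all of $\bbT$, is the relevant subgroup --- is the parahoric analogue of the root-datum bookkeeping in \cite[\S3.5]{CI21-RT}; the remaining coordinate checks (that the new $x,x',u,u'$ lie in $\sigma(\bbU_r)$, $\sigma(\bbN_r)$, $\bbU_r$, $\bbN_r$, respectively, and that $\mu m^{-1}$ still maps to $\bar\mu$ in $\bbM_0$) are immediate from $\bbT$- and $\bbM$-stability of the Moy--Prasad filtration.
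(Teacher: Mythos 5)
Your construction diverges from the paper's at the crucial point, and as described it would not go through. In the paper's proof the coordinate $z$ is \emph{not} modified at all and $u$ is untouched; the action of $m\in\cH_{\bar\mu}$ is $(x,x',u,u',z,\mu)\mapsto (x\,\sigma(\xi),\,\hat x',\,u,\,\sigma(m)^{-1}u'\sigma(m),\,z,\,\mu\,\sigma(m))$, where the correction $\xi$ is \emph{not} a level-$r$ element but an element of the root subgroup $\bbU_{\alpha,\,r-a:r+}$ at depth $r-a<r$ (recall $1\le a\le r$). The whole mechanism rests on \cite[Proposition 3.8]{CI21-RT}: because $z$ lies in the stratum $\bbK_{w,r}^{a,I}$ with $-\alpha\in I$, commutation with $z$ induces a map $\lambda_z\from \bbU_{\alpha,r-a:r+}\to\bbT_{r:r+}^{\alpha}$ which factors through an isomorphism $\bbU_{\alpha,r-a:(r-a)+}\cong\bbT_{r:r+}^{\alpha}$; choosing a section $s_z$ and setting $\xi=s_z(\dot w\mu\, m\sigma(m)^{-1}\mu^{-1}\dot w^{-1})$, the commutator identity $\xi z=z\,\xi\,\tau_{\xi,z}\,\omega_{\xi,z}$ produces exactly the element $\tau_{\xi,z}\in\bbT_{r:r+}^{\alpha}$ needed, adjacent to $\dot w\mu$, to cancel the error created by translating $\mu$ by $\sigma(m)$ (the remaining work, which is the bulk of the proof, is pushing the leftover factors into $\sigma(\bbN_r)$ so that $\hat x'$ is well defined). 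So the role of the hypothesis $-\alpha\in I$ is not merely that $\bbU_{-\alpha,r}\subset\bbK_{w,r}$ (which is how you use it); it is that the level-$a$ leading term of $z$ itself has a nontrivial $-\alpha$-component, which is what makes $\lambda_z$ surjective onto $\bbT_{r:r+}^{\alpha}$ and forces the correction to live at depth $r-a$.

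Your proposed corrections are all at level $r$, and they cannot cancel the error, which is an arbitrary element of $\dot w\mu$-conjugated $\bbT_{r:r+}^{\alpha}$ (by Lang surjectivity $m\sigma(m)^{-1}$ ranges over the whole line as $m$ ranges over $\cH_{\bar\mu}$). Conjugating $x$ and $u$ by $t_m\in\bbT_{r:r+}^{\alpha}$ changes them only by unipotent-direction terms in $\sigma(\bbU_{r:r+})$ resp.\ $\bbU_{r:r+}$, so it produces no torus-direction contribution; and conjugating the whole defining equation by $t_m$ is not available since $t_m$ is not $\sigma$-fixed. Multiplying $z$ by an element of $\bbU_{-\alpha,r:r+}$ fares no better: a $\bbT_{r:r+}^{\alpha}$-component can only arise from pairing it against a level-$0$ element of $\bbU_{\alpha}$, and the level-$0$ parts of $x,u$ are determined by the point being acted on rather than at your disposal (for points where those components vanish nothing toral is produced), while the new factor, once pushed across $\dot w$, lands in $\dot w^{-1}\bbU_{-\alpha,r:r+}\dot w\subset\bbN^-_{r:r+}$ and cannot be absorbed into $u'\in\bbN_r$ or $x'\in\sigma(\bbN_r)$. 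So the key missing idea is the commutator pairing of a depth-$(r-a)$ correction in $\bbU_{\alpha}$ (inserted into $x$) against the depth-$a$ component of $z$, i.e.\ the map $\lambda_z$ and its section $s_z$; without it the main verification that the transformed tuple again lies in $\widehat\Sigma_{w,\bar\mu}^{\prime,a,I}$ fails.
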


\begin{proof}
  Choose any $z \in \bbK_{w,r}^{a,I}$. For any $\xi \in \bbU_{\alpha,r-a:r+}$, consider the commutator $[\xi^{-1},z^{-1}] = \xi^{-1} z^{-1} \xi z$. By \cite[Proposition 3.8]{CI21-RT} together with the fact that $\xi$ and $z$ are both normalized by $\dot w \bbM_r \dot w^{-1}$, the construction of $\bbK_{w,r}^{a,I}$ ensures that $[\xi^{-1},z^{-1}] \in [\bbU_{\alpha,r-a:r+},\bbK_{w,r}^{a,I}]$ takes values in $\bbT_{r:r+}^\alpha(\dot w \bbN_{r:r+} \dot w^{-1})$, where $\bfT^\alpha$ is rank-1 subtorus of $\bfT$ contained in group generated by $\bfU_\alpha$ and $\bfU_{-\alpha}$. In particular, we may now define 
  \begin{equation*}
    [\xi^{-1},z^{-1}] = \tau_{\xi,z} \cdot \omega_{\xi,z}, \qquad \text{where $\tau_{\xi,z} \in \bbT_{r:r+}^\alpha$ and $\omega_{\xi,z} \in \dot w \bbN_{r:r+} \dot w^{-1}$.}
  \end{equation*} 
  Moreover, the assignment $\xi \mapsto \tau_{\xi,z}$ defines a map $\lambda_z \from \bbU_{\alpha,r-a:r+} \to \bbT_{r:r+}^\alpha$ which factors through an isomorphism $\bbU_{\alpha,r-a:(r-a)+} \cong \bbT_{r:r+}^\alpha$. Fix a section $s_z \from \bbT_{r:r+}^\alpha \to \bbU_{\alpha,r-a:r+}$ of $\lambda_z$.

  For notational convenience, write $\cH \colonequals \cH_{\bar \mu}$. For $m \in \cH$, consider the function
  \begin{equation}\label{eq:m action}
    f_{m}(x,x',u,u',z,\mu) = (x \sigma(\xi), \hat x', u, \sigma(m)^{-1}u'\sigma(m), z, \mu \sigma(m))
  \end{equation}
  for $(x,x',u,u',z,\mu) \in \widehat \Sigma_{w,\bar \mu}^{\prime,a,I}$, where
  \begin{equation*}
    \xi = s_z(\dot w \mu m \sigma(m)^{-1} \mu^{-1} \dot w^{-1}) \in \bbU_{\alpha,r-a:r+} \subset \bbU_r \cap \dot w \bbN_r \dot w^{-1}
  \end{equation*}
  and $\hat x'$ is defined by 
  $x \sigma(\xi) \sigma(z) \sigma(\dot w) \sigma(\mu) \sigma^2(m) = u z \dot w \mu u' \sigma(m) \hat x'$.   
  
  It is a quick argument to see that $f_{m'} \circ f_{m} = f_{mm'}$. Indeed, in the first coordinate, this amounts to observing that $\bbM_{r:r+}$ is commutative, and in coordinates 3 through 6, it is obvious. It follows from this that $f_{m'} \circ f_{m} = f_{mm'}$ also holds in the second coordinate. Hence to see that $f_m$ defines an action on $\widehat \Sigma_{w,\bar \mu}'$, it remains to show that the image under $f_m$ of any $(x,x',u,u',z,\mu) \in \widehat \Sigma_{w, \bar \mu}'$ lies in $\widehat \Sigma_{w, \bar \mu}'$. To do this amounts to showing $\hat x' \in \sigma(\bbN_r)$, and we spend the rest of the proof doing this.

  The argument to show $\hat x' \in \sigma(\bbN_r)$ is exactly the same as in \cite[p.\ 7]{Lus04}. We provide it here for completeness. The statement
  \begin{equation*}
    x \sigma(\xi) \sigma(z \dot w \mu \sigma(m)) \in u z \dot w \mu u' \sigma(m) \sigma(\bbN_r)
  \end{equation*}
  holds if and only if
  \begin{equation*}
    x \sigma(z) \sigma(\xi) \sigma(\tau_{\xi,z}) \sigma(\omega_{\xi,z}) \sigma(\dot w \mu \sigma(m)) \in u z \dot w \mu u' \sigma(m) \sigma(\bbN_r) 
  \end{equation*}
  since by definition $\xi z = z \xi \tau_{\xi, z} \omega_{\xi, z}$ where $\tau_{\xi, z} \in \bbT_{r:r+}^\alpha$ and $\omega_{\xi,z} \in \dot w \bbN_{r:r+} \dot w^{-1}$. By definition, we have $x\sigma(z) = u z \dot w \mu u' x' \sigma(\mu)^{-1} \sigma(\dot w)^{-1}$, so the previous statement holds if and only if
  \begin{equation*}
    x' \sigma(\mu)^{-1} \sigma(\dot w)^{-1} \sigma(\xi) \sigma(\tau_{\xi, z}) \sigma(\omega_{\xi,z}) \sigma(\dot w \mu \sigma(m)) \in \sigma(m) \sigma(\bbN_r).
  \end{equation*}
  By construction, $x' \in \sigma(\bbN_r)$ and $\sigma(\dot w^{-1}) \sigma(\omega_{\xi,z}) \sigma(\dot w) \in \sigma(\bbN_r)$ and $\sigma(\dot w^{-1}) \sigma(\xi) \sigma(\dot w) \in \sigma(\bbN_r)$. Since $\bbM_r$ normalizes $\bbN_r$, the previous statement holds if and only if
  \begin{equation*}
    \sigma(\mu)^{-1} \sigma(\dot w)^{-1} \sigma(\tau_{\xi,z}) \sigma(\dot w) \sigma(\mu \sigma(m)) \in \sigma(m) \sigma(\bbN_r),
  \end{equation*}
  and projecting to the Levi component $\bbM_r$, we see that the previous statement holds if and only if
  \begin{equation*}
    \mu^{-1} \dot w^{-1} \tau_{\xi,z} \dot w \mu = m \sigma(m)^{-1},
  \end{equation*}
  which follows from the definition of $\xi$.
\end{proof}

We can find $n \geq 1$ such that $\sigma^{n}(\mu^{-1} \dot w^{-1} \bbT_{r:r+}^\alpha \dot w \mu) = \mu^{-1} \dot w^{-1} \bbT_{r:r+}^\alpha \dot w \mu$. Then we have a morphism
\begin{equation*}
  \cN_\sigma^{\sigma^n} \from \mu^{-1} \dot w^{-1} \bbT_{r:r+}^\alpha \dot w \mu \to \cH, \qquad m \mapsto m \sigma(m) \sigma^{2}(m) \cdots \sigma^{n-1}(m)
\end{equation*}
since
\begin{equation*}
  \cN_\sigma^{\sigma^n}(m) \sigma(\cN_\sigma^{\sigma^n}(m))^{-1} =  \cN_\sigma^{\sigma^n}(m\sigma(m)^{-1}) = t' \sigma^n(m)^{-1} \in \mu^{-1} \dot w^{-1} \bbT_{r:r+}^\alpha \dot w \mu,
\end{equation*}
where the second equality holds since $\bbM_{r:r+}$ is commutative. 

\begin{lemma}\label{lem:H^0}
  The intersection $\cH^0 \cap \bbM_{r:r+}^\sigma$ contains $\cN_\sigma^{\sigma^n}((\mu^{-1} \dot w^{-1} \bbT_{r:r+}^\alpha \dot w \mu)^{\sigma^n})$.
\end{lemma}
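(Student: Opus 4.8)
The plan is to exploit that $\cN_\sigma^{\sigma^n}$ is a morphism of varieties with connected source sending $1$ to $1$, together with the commutativity of $\bbM_{r:r+}$. First I would record the two structural inputs. The subgroup $\cH=\cH_{\bar\mu}$ is genuinely an algebraic subgroup of $\bbM_{r:r+}$: the defining condition $m\sigma(m)^{-1}\in\mu^{-1}\dot w^{-1}\bbT_{r:r+}^\alpha\dot w\mu$ is stable under multiplication precisely because $\bbM_{r:r+}$ is commutative, so that $\cH^0$ (its identity component) makes sense. And $\mu^{-1}\dot w^{-1}\bbT_{r:r+}^\alpha\dot w\mu$ is connected: it is the image of $\bbT_{r:r+}^\alpha$ under conjugation by the geometric point $\dot w\mu\in\bbG_r(\overline\FF_q)$, an isomorphism of varieties, while $\bbT_{r:r+}^\alpha$ is itself connected, being the perfection of a graded Moy--Prasad piece of the rank-one split torus $\bfT^\alpha$, hence of a vector group.

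Next, $\cN_\sigma^{\sigma^n}$, built from Frobenius powers and multiplication, is a morphism of varieties $\mu^{-1}\dot w^{-1}\bbT_{r:r+}^\alpha\dot w\mu\to\cH$ --- this was already used when the map was introduced, to verify that it lands in $\cH$ --- and $\cN_\sigma^{\sigma^n}(1)=1$. Since morphisms of varieties are Zariski-continuous, the image of $\cN_\sigma^{\sigma^n}$ is a connected subset of $\cH$ containing the identity, hence contained in the identity component $\cH^0$. In particular $\cN_\sigma^{\sigma^n}\big((\mu^{-1}\dot w^{-1}\bbT_{r:r+}^\alpha\dot w\mu)^{\sigma^n}\big)\subseteq\cH^0$.

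It remains to check $\sigma$-invariance. If $m\in\mu^{-1}\dot w^{-1}\bbT_{r:r+}^\alpha\dot w\mu$ satisfies $\sigma^n(m)=m$, then, using that $\bbM_{r:r+}$ is commutative,
\[
  \sigma\big(\cN_\sigma^{\sigma^n}(m)\big)
  =\sigma(m)\sigma^2(m)\cdots\sigma^{n-1}(m)\,\sigma^n(m)
  =\big(\sigma(m)\cdots\sigma^{n-1}(m)\big)\,m
  =m\,\sigma(m)\cdots\sigma^{n-1}(m)
  =\cN_\sigma^{\sigma^n}(m),
\]
so $\cN_\sigma^{\sigma^n}(m)\in\bbM_{r:r+}^\sigma$. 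Combining this with the previous paragraph gives $\cN_\sigma^{\sigma^n}\big((\mu^{-1}\dot w^{-1}\bbT_{r:r+}^\alpha\dot w\mu)^{\sigma^n}\big)\subseteq\cH^0\cap\bbM_{r:r+}^\sigma$, which is the claim.

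I do not expect a genuine obstacle: the argument is formal once the connectedness of $\bbT_{r:r+}^\alpha$ is in hand. The only step meriting a line of justification is precisely that connectedness in the perfected group-scheme setting of the paper, which holds because the graded Moy--Prasad piece of a (split) rank-one torus at a positive level is a vector group; the rest is a direct manipulation of the norm map and commutativity of $\bbM_{r:r+}$.
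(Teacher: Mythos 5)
Your proposal is correct and follows essentially the same route as the paper: connectedness of $\bbT_{r:r+}^\alpha$ (hence of its conjugate) forces the image of $\cN_\sigma^{\sigma^n}$, which contains the identity, to lie in $\cH^0$, and the commutativity of $\bbM_{r:r+}$ gives $\sigma$-invariance of $\cN_\sigma^{\sigma^n}(m)$ for $\sigma^n$-fixed $m$. The extra justifications you supply (that $\cH$ is an algebraic subgroup and that $\bbT_{r:r+}^\alpha$ is a perfected vector group, hence connected) are fine elaborations of steps the paper leaves implicit.
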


\begin{proof}
Since $\bbT_{r:r+}^\alpha$ is connected, its image in $\cH$ under $\cN_\sigma^{\sigma^n}$ must also be connected. If $m \in (\mu^{-1} \dot w^{-1} \bbT_{r:r+}^\alpha \dot w \mu)^{\sigma^n}$, then $\cN_\sigma^{\sigma^n}(m)$ is $\sigma$-stable, so the desired conclusion follows.
\end{proof}

By Lemma \ref{lem:w mu action}, the connected algebraic group $\cH^0$ acts on $H_c^i(\widehat \Sigma_{w,\bar\mu}^{\prime, a, I}, \overline \QQ_\ell)$, and by general principles this action must be trivial. Hence by Lemma \ref{lem:H^0}, we know that the finite group $\cN_\sigma^{\sigma^n}((\mu^{-1} \dot w^{-1} \bbT_{r:r+}^\alpha\dot w \mu)^{\sigma^n})$ acts trivially on $H_c^i(\widehat \Sigma_{w,\bar\mu}^{\prime, a, I}, \overline \QQ_\ell)$. On the other hand, by construction, we have $\dot w^{-1} \cdot \alpha \notin \Phi(\bfM,\bfT)$, so the weak $(\bfM,\bfG)$-genericity of $\psi$ implies that $\psi \circ \cN_\sigma^{\sigma^n}$ is nontrivial on $\cN_\sigma^{\sigma^n}((\mu^{-1} \dot w^{-1} \bbT_{r:r+}^\alpha \dot w \mu)^{\sigma^n})$. Therefore,
\begin{equation*}
  H_c^i(\widehat \Sigma_{w, \bar \mu}^{\prime, a, I}, \overline \QQ_\ell)_{(\psi)} = 0 \qquad \text{for all $i \geq 0$}.
\end{equation*} 
Since $\bar \mu, a, I$ are all chosen arbitrarily, the conclusion of the proposition follows.

\subsection{Proof of Proposition \ref{prop:Sigma''}}\label{subsec:Sigma'' proof}
  By the Deligne--Lusztig fixed-point formula \cite[Theorem 3.2]{DL76}, if $H$ is any algebraic torus which acts on $\widehat \Sigma_w''$ compatibly with the action of $\bbM_r^\sigma \times \bbT_r^\sigma$, then we have an isomorphism
  \begin{equation*}
    \sum_{i \geq 0} (-1)^i H_c^i(\widehat \Sigma_w'', \overline \QQ_\ell) \cong     \sum_{i \geq 0} (-1)^i H_c^i((\widehat \Sigma_w'')^{H}, \overline \QQ_\ell)
  \end{equation*}
  of virtual $(\bbM_r^\sigma \times \bbT_r^\sigma)$-representations. In this proof, we will construct such an algebraic torus (we will call it $\bar H_w^0$) and show that either $(\widehat \Sigma_w'')^{\bar H_w^0}$ is empty or has cohomology equal (up to an even shift) to the cohomology of the parahoric Deligne--Lusztig variety $X_{\bbT_r, \bbB_r \cap {}^w \bbM_r}^{{}^w \bbM_r}$. 

  Recall that $(x,x',u,u',1,\mu) \in \widehat \Sigma_w''$ if and only if $x\sigma(\dot w \mu) = u \dot w \mu u' x'$. Then a straightforward calculation shows that equation \eqref{eq:action} also defines an action of
  \begin{equation*}
    H_w \colonequals \{(t,m) \in \bbT_r \times Z(\bbM_r) : 
    t^{-1}\sigma(t) = \sigma(\dot w) m^{-1}\sigma(m) \sigma(\dot w)^{-1}\}
  \end{equation*}
  on $\widehat \Sigma_w''$. Let $\bar H_w$ denote the image of $H_w$ under the surjection $\bbT_r \times Z(\bbM_r) \to \bbT_0 \times Z(\bbM_0)$. Then the identity component $\bar H_w^0$ is an algebraic torus.

  \begin{claim}\mbox{}
    \begin{enumerate}[label=(\alph*)]
      \item The projection map $\bar H_w^0 \to Z(\bbM_0)$ has image containing $Z(\bbM_0)^0$.
      \item If $w$ has a representative $\dot w$ in $\cS(\bbT_r,\bbM_r)^\sigma$, then $(\widehat \Sigma_w'')^{\bar H_w^0} \cong S_w$, where
      \begin{equation*}
        S_w \colonequals \{(u,\mu) \in (\bbU_r \cap \dot w \bbM_r \dot w^{-1}) \times \dot w \bbM_r \dot w^{-1} : u \mu \sigma(\mu)^{-1} \in \sigma(\bbU_r)\}.
      \end{equation*}
      Otherwise, $(\widehat \Sigma_w'')^{\bar H_w^0} = \varnothing.$
    \end{enumerate}
  \end{claim}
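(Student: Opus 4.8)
The plan is to handle part (a) by a Lang-map surjectivity argument and part (b) by directly solving the fixed-point equations coming from the action \eqref{eq:action}, the essential input for (b) being that $Z(\bfM)^0$ acts on the unipotent radical $\bfN$ (and on $\sigma(\bfN)$) with no nonzero fixed vectors. For (a): first note that $\cS(\bbT_r,\bbM_r)$ is a union of $(\bbT_r\times\bbM_r)$-double cosets and is $\sigma$-stable (since $\sigma$ preserves $\bbT_r$ and $\bbM_r$), so $\dot w^{-1}\bbT_r\dot w$ and $\sigma(\dot w)^{-1}\bbT_r\sigma(\dot w)$ are both maximal tori of $\bbM_r$ and hence both contain $Z(\bbM_r)$; in particular $\sigma(\dot w)\,Z(\bbM_r)\,\sigma(\dot w)^{-1}\subseteq\bbT_r$. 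Therefore for each $m\in Z(\bbM_r)$ the element $\sigma(\dot w)m^{-1}\sigma(m)\sigma(\dot w)^{-1}$ lies in the connected group $\bbT_r$, and surjectivity of the Lang map on $\bbT_r$ supplies $t$ with $t^{-1}\sigma(t)$ equal to it, i.e.\ $(t,m)\in H_w$. Thus $H_w\to Z(\bbM_r)$ is surjective; composing with $Z(\bbM_r)\to Z(\bbM_0)$ (whose image contains $Z(\bbM_0)^0$) and using that a homomorphism of algebraic groups carries identity component onto identity component of its image gives that $\bar H_w^0\to Z(\bbM_0)$ has image $Z(\bbM_0)^0$. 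The same computation shows $H_w^0\to Z(\bbM_r)^0$ is surjective, which I reuse below.

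For (b), reading \eqref{eq:action} with $z=1$, a point $(x,x',u,u',\mu)$ is fixed by $(t,m)$ exactly when $txt^{-1}=x$, $tut^{-1}=u$, $mx'm^{-1}=x'$, $mu'm^{-1}=u'$, and (from the last coordinate, using that $m\in Z(\bbM_r)$ commutes with $\mu$) $\dot w^{-1}t\dot w=m$. The last relation, imposed over a torus surjecting onto $Z(\bbM_r)^0$, forces $t=\dot w m\dot w^{-1}$ for all such $(t,m)$; substituting into the defining relation $t^{-1}\sigma(t)=\sigma(\dot w)m^{-1}\sigma(m)\sigma(\dot w)^{-1}$ of $H_w$ and cancelling forces $\sigma(\dot w)^{-1}\dot w$ to centralize $Z(\bbM_r)^0$, hence $\sigma(\dot w)^{-1}\dot w\in C_{\bbG_r}(Z(\bbM_r)^0)=\bbM_r$. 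Then $\sigma(\dot w)\in\dot w\bbM_r$, so the double coset $\bbT_r\dot w\bbM_r\subseteq\cS(\bbT_r,\bbM_r)$ is $\sigma$-stable, and Lang--Steinberg applied to the connected group $\bbT_r\times\bbM_r$ acting transitively on it produces a $\sigma$-fixed representative of $w$. Contrapositively, if $w$ has no representative in $\cS(\bbT_r,\bbM_r)^\sigma$, then $(\widehat\Sigma_w'')^{\bar H_w^0}=\varnothing$.

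If instead $\dot w\in\cS(\bbT_r,\bbM_r)^\sigma$, then $t=\dot w m\dot w^{-1}$ with $m$ ranging over $Z(\bbM_r)^0$, and since $\sigma$ preserves $\bfT$ and $\bfM$ every root of $\bfT$ occurring in $\bfN$ or $\sigma(\bfN)$ lies outside $\Phi(\bfM,\bfT)$ and so is nontrivial on $Z(\bfM)^0$; hence the torus part of $Z(\bbM_r)^0$ has no nonzero fixed vectors on $\bbN_r$ or $\sigma(\bbN_r)$, and $mx'm^{-1}=x'$, $mu'm^{-1}=u'$ force $x'=u'=1$. Likewise $txt^{-1}=x$, $tut^{-1}=u$ with $t$ ranging over $\dot w Z(\bbM_r)^0\dot w^{-1}\subseteq\bbT_r$ force $x,u\in C_{\bbG_r}(\dot w Z(\bbM_r)^0\dot w^{-1})=\dot w\bbM_r\dot w^{-1}$, so $u\in\bbU_r\cap\dot w\bbM_r\dot w^{-1}$. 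With $x'=u'=1$ the defining relation of $\widehat\Sigma_w''$ becomes $x=u\,\dot w\mu\sigma(\mu)^{-1}\sigma(\dot w)^{-1}$; substituting $\mu'\colonequals\dot w\mu\dot w^{-1}\in\dot w\bbM_r\dot w^{-1}$ and using $\sigma(\dot w)=\dot w$ this reads $x=u\mu'\sigma(\mu')^{-1}$, so the sole remaining constraint $x\in\sigma(\bbU_r)$ becomes $u\mu'\sigma(\mu')^{-1}\in\sigma(\bbU_r)$ --- precisely the condition defining $S_w$. Conversely each $(u,\mu')\in S_w$ yields such a fixed point, and $(x,x',u,u',\mu)\mapsto(u,\mu')$ is an isomorphism of varieties onto $S_w$ (the remaining coordinates being determined), intertwining the residual $\bbM_r^\sigma$-action with the $\ad(w)$-twisted action on $S_w$.

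The step I expect to be the main obstacle is (b): one must be careful with the centralizer identity $C_{\bbG_r}(Z(\bbM_r)^0)=\bbM_r$ and the no-fixed-vectors statements in the perfected Moy--Prasad-quotient setting (both rest on the root-group decomposition of $\bbG_r$ together with the $\sigma$-stability of $\bfT$ and $\bfM$), and one must pin down exactly which torus $\bar H_w^0$ acts on $\widehat\Sigma_w''$ and how the $H_w$-action descends, so that the Deligne--Lusztig fixed-point formula \cite[Theorem 3.2]{DL76} may be applied. Once $\sigma(\dot w)=\dot w$ is in force, the substitution $\mu\mapsto\dot w\mu\dot w^{-1}$ and the verification that $S_w$ is recovered verbatim are routine bookkeeping.
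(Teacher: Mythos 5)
Your argument is correct and follows the paper's proof in all essentials: part (a) is the same Lang-map argument, resting on $\sigma(\dot w)\,Z(\bbM_r)^0\,\sigma(\dot w)^{-1}\subseteq\bbT_r$ (valid since $\sigma(\dot w)\in\cS(\bbT_r,\bbM_r)$), and part (b) solves the same fixed-point equations, uses (a) to identify $\bar H_w^0$ with the graph $\{(\dot w m\dot w^{-1},m):m\in Z(\bbM_0)^0\}$, and then uses the centralizer of the central torus being $\bbM_r$ together with $\bbN_r\cap\bbM_r=\{1\}$ to force $x'=u'=1$, $x,u\in\dot w\bbM_r\dot w^{-1}$, after which the identification with $S_w$ is the same bookkeeping (the level-$0$ versus level-$r$ bookkeeping you flag is glossed in the same way in the paper, so it is not a gap relative to it). The only (harmless) divergence is how the $\sigma$-fixed representative is produced in the nonempty case: you extract $\sigma(\dot w)^{-1}\dot w\in\bbM_r$ by substituting $t=\dot w m\dot w^{-1}$ into the defining relation of $H_w$ and then invoke Lang--Steinberg on the double coset, whereas the paper reads off $\sigma$-stability of the coset from the surviving equation $x\sigma(\dot w\mu)=u\dot w\mu$ with $x,u\in\dot w\bbM_r\dot w^{-1}$.
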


  \begin{proof}[Proof of Claim]
    For (a): Let $m \in Z(\bbM_r)^0$. Then $m^{-1}\sigma(m) \in Z(\bbM_r)^0 \subset \bbT_r$ and of course $\sigma(\dot w)^{-1} m^{-1} \sigma(m) \sigma(\dot w) \in \bbT_r$, so by the surjectivity of the Lang map, there is some $t \in \bbT_r$ such that $(t,m) \in H_w$. Hence the image of $H_w$ in the projection to $Z(\bbM_r)$ contains $Z(\bbM_r)^0$, and the same is true of $\bar H_w \to Z(\bbM_0)$. But now the connectedness of $Z(\bbM_0)^0$ implies that $\bar H_w^0$ projects surjectively onto $Z(\bbM_0)^0$.

    For (b): We compute on $\overline \FF_q$-points. Assume $(\widehat \Sigma_w'')^{\bar H_w^0} \neq \varnothing$ and let $\dot w$ be any representative of $w$. Then in particular there exists a $\mu \in \bbM_r$ such that $\dot w^{-1} t \dot w m \mu = \mu$ for all $(t,m) \in \bar H_w^0$, which implies that $\dot w^{-1} t \dot w = m$ for all $(t,m) \in \bar H_w^0$. On the other hand, by (a), this implies that
    \begin{equation*}
      \bar H_w^0 = \{(\dot w m  \dot w^{-1}, m) : m \in Z(\bbM_0)^0\}.
    \end{equation*}
    This then implies that for any $(x,x',u,u',1,\mu) \in (\widehat \Sigma_w'')^{\bar H_w^0}$, the elements $x,u$ centralize $\dot w Z(\bbM_0)^0 \dot w^{-1}$ and the elements $x',u'$ centralize $Z(\bbM_0)^0$. Since we have $Z_{\bbG_r}(Z(\bbM_0)^0) = \bbM_r$, we see that $x,u \in \dot w \bbM_r \dot w^{-1}$ and $x',u' \in \bbM_r$. Since $\bbN_r \cap \bbM_r = \{1\}$, this implies $x' = u' = 1$ and $x\sigma(\dot w \mu) = u \dot w \mu$. This implies that $w$ can be represented by an element $\dot w'$ such that $\sigma(\dot w')\dot w'{}^{-1} \in \bbM_r$, which implies that the double coset $w$ has a representative in $\cS(\bbT_r, \bbM_r)^\sigma$. From this argument, plus a simple elementary manipulation of terms, we now see (b) of the Claim.
  \end{proof}

  Part (b) of the Claim implies the last sentence of Proposition \ref{prop:Sigma''}. Now assume that $w$ has a representative $\dot w \in \cS(\bbT_r,\bbM_r)^\sigma$. We can see that the multiplication map 
  \begin{equation*}
    S_w \to {}^w \bbM_r, \qquad (u,\mu) \mapsto (u \mu)^{-1}
  \end{equation*}
  has image exactly equal to $X_{\bbT_r,\bbB_r \cap {}^w \bbM_r}^{{}^w \bbM_r}$ and fibers isomorphic to $\bbU_r \cap {}^w \bbM_r$, an affine space. Moreover, the action of $(t,m) \in \bbT_r^\sigma \times {}^w\bbM_r^\sigma$ on $(u,\mu)$ is $(tut^{-1}, t \mu \dot w m^{-1} \dot w^{-1})$, which under the above multiplication map gets sent to $(u \mu)^{-1} \mapsto \dot w m \dot w^{-1} (u \mu)^{-1} t^{-1}$, which is exactly the $(\bbT_r^\sigma \times {}^w \bbM_r^\sigma)$-action on $X_{\bbT_r, \bbB_r \cap {}^w \bbM_r}^{{}^w \bbM_r}$.

  \subsection{Proof of Theorem \ref{thm:generic mackey}}\label{subsec:generic mackey proof}

  Let $\rho$ be a weakly $(\bfM,\bfG)$-generic representation of $\bbM_r^\sigma$. The desired result follows directly from Propositions \ref{prop:Sigma'} and \ref{prop:Sigma''}; we spell it out in detail here. For any $t \in \bbT_r^\sigma$, we have
  \begin{align*}
    {}^* {}&{}R_{\bbT_r,\bbB_r}^{\bbG_r} \circ R_{\bbM_r,\bbP_r}^{\bbG_r}(\rho)(t) \\
    &= \frac{1}{|\bbM_r^\sigma|} \sum_{m \in \bbM_r^\sigma} \rho(m)^{-1} \Tr((t,m); H_c^*(\Sigma, \overline \QQ_\ell)) \\
    &= \sum_{w \in \bbT_r \backslash \cS(\bbT_r, \bbM_r)^\sigma / \bbM_r} \frac{1}{|\bbM_r^\sigma|} \sum_{m \in \bbM_r^\sigma} \rho(m)^{-1} \Tr((t,m); H_c^*(\widehat \Sigma_w, \overline \QQ_\ell)) \\
    &= \sum_{w \in \bbT_r \backslash \cS(\bbT_r, \bbM_r)^\sigma / \bbM_r} \frac{1}{|\bbM_r^\sigma|} \sum_{m \in \bbM_r^\sigma} \rho(m)^{-1} \Tr((t,m); H_c^*(\widehat \Sigma_w'', \overline \QQ_\ell)) \\
    &= \sum_{w \in \bbT_r \backslash \cS(\bbT_r, \bbM_r)^\sigma / \bbM_r} \frac{1}{|\bbM_r^\sigma|} \sum_{m \in \bbM_r^\sigma} \rho(m)^{-1} \Tr((\dot w m \dot w^{-1},t); H_c^*(X_{\bbT_r, \bbB_r \cap {}^w \bbM_r}^{{}^w \bbM_r}, \overline \QQ_\ell)) \\
    &= \sum_{w \in \bbT_r \backslash \cS(\bbT_r, \bbM_r)^\sigma / \bbM_r} {}^* R_{\bbT_r, \bbB_r \cap {}^w \bbM_r}^{{}^w \bbM_r}(\ad(w)^{-1}(\rho))(t).
  \end{align*}
  where the first equality follows from the isomorphism $X_{\bbT_r,\bbB_r}^{\bbG_r} \times_{\bbG_r^\sigma} X_{\bbM_r,\bbP_r}^{\bbG_r} \cong \Sigma$, the second equality follows from Lemma \ref{lem:Sigma hat}, the third equality follows from Proposition \ref{prop:Sigma'}, the fourth equality follows from Proposition \ref{prop:Sigma''}, and the last equality holds by definition.

  
\section{Parahoric Deligne--Lusztig varieties for elliptic tori}\label{sec:depth compatibility}

It is natural to ask how the parahoric Deligne--Lusztig functors $R_{\bbT_r,\bbB_r}^{\bbG_r}$ are compatible as $r$ varies. From the surjectivity of the Lang map, it follows that the morphism
\begin{equation*}
  \tilde \pi \from X_{\bbT_r,\bbB_r}^{\bbG_r} \to X_{\bbT_{r-1},\bbB_{r-1}}^{\bbG_{r-1}}
\end{equation*}
is surjective. The technical effort of this section is in proving the following theorem:

\begin{theorem}\label{thm:fiber cohomology}
  Let $N = \dim \bbU_{(r-1)+:r+}$. 
  For any point $x \in X_{\bbT_{r-1},\bbB_{r-1}}^{\bbG_{r-1}}$,
  \begin{equation*}
    H_c^i(\tilde \pi^{-1}(x), \overline \QQ_\ell)^{\bbT_{r:r+}^\sigma} = \begin{cases}
      \overline \QQ_\ell^{\oplus \#\bbU_{(r-1)+:r+}^\sigma} & \text{if $i = 2N$,} \\
      0 & \text{otherwise.}
    \end{cases}
  \end{equation*} 
  Moreover, $\sigma^n$ acts on $H_c^{2N}(\tilde \pi^{-1}(x), \overline \QQ_\ell)^{\bbT_{r:r+}^\sigma}$ by multiplication by $q^{nN}$.
\end{theorem}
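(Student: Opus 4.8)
The plan is to reduce to an explicit description of the fibers of $\tilde\pi$, recognize that description as (a union of translates of) an affine space of dimension $N$, and then take $\bbT_{r:r+}^\sigma$-invariants.

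\textbf{Step 1 (the fiber).} Fix $x\in X_{\bbT_{r-1},\bbB_{r-1}}^{\bbG_{r-1}}$ and, using the surjectivity of $\tilde\pi$, a lift $g\in X_{\bbT_r,\bbB_r}^{\bbG_r}$; write $\sigma(u)=g^{-1}\sigma(g)\in\sigma(\bbU_r)$. Running the computation in the proof of Lemma~\ref{lem:invariants} with $\bbM=\bbT$, $\bbP=\bbB$ and passing from depth $r$ to depth $r-1$ gives a $\bbT_{r:r+}^\sigma$-equivariant isomorphism
\[
  \tilde\pi^{-1}(x)\;\cong\;\{v\in \bbG_{(r-1)+:r+}\;:\;v^{-1}\sigma(v)\in g\,\sigma(\bbU_r)\,g^{-1}\},
\]
on which $m\in\bbT_{r:r+}^\sigma$ acts by the translation $v\mapsto v\cdot gm^{-1}g^{-1}$. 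Set $V:=\bbG_{(r-1)+:r+}$ and $W:=g\,\sigma(\bbU_r)\,g^{-1}\cap V=g\,(\sigma(\bbU_r)\cap V)\,g^{-1}$, using that $V$ is normal in $\bbG_r$. Because the depth-$(r-1)+$-to-$r+$ slice meets each root subgroup in exactly one line, $\sigma(\bbU_r)\cap V$ is a product of $N=\#\Phi(\bfG,\bfT)/2$ lines; since conjugation by $g$ is an automorphism of $V$, it follows that $W$ is a connected subgroup of $V$ of dimension $N$, and a vector subgroup once $V$ is commutative.

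\textbf{Step 2 (the fiber is cohomologically $\sqcup\,\bbA^N$).} Suppose first $V$ is commutative (e.g.\ $r\ge 2$, or $x$ hyperspecial; in general $V$ is a $\sigma$-equivariant iterated central extension of such commutative pieces, which one handles by a tower of affine fibrations). Then the Lang map $L:=\sigma-1\colon V\to V$ is a surjective homomorphism with finite kernel $V^\sigma$, so $\tilde\pi^{-1}(x)\cong L^{-1}(W)$ is a subgroup of $V$ containing $V^\sigma$, whose identity component maps onto the connected group $W$ under the \'etale homomorphism $L$ with finite kernel, hence is $\cong\bbA^N$; thus $L^{-1}(W)$ is a disjoint union of finitely many translates of $\bbA^N$. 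Consequently $H_c^i(\tilde\pi^{-1}(x),\overline \QQ_\ell)=0$ for $i\ne 2N$, the group $H_c^{2N}$ is a sum of copies of $\overline \QQ_\ell(-N)$, and $\sigma^n$ acts on it by $q^{nN}$. (An argument insensitive to the commutativity of $V$: $L^{-1}(W)\to W$ is a $V^\sigma$-torsor, so $H_c^*(\tilde\pi^{-1}(x),\overline \QQ_\ell)=\bigoplus_{\chi\in\widehat{V^\sigma}}H_c^*(W,\cL_\chi|_W)$ with $\cL_\chi$ the Lang local system of $\chi$; each $\cL_\chi|_W$ is a multiplicative rank-one local system on the connected commutative unipotent group $W\cong\bbA^N$, and the compactly supported cohomology of such a local system vanishes unless it is trivial, in which case it equals $\overline \QQ_\ell(-N)[-2N]$.)

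\textbf{Step 3 (invariants and the count).} The finite group $\bbT_{r:r+}^\sigma$ acts on $\tilde\pi^{-1}(x)$ freely, by translations inside a commutative group, so $H_c^*(\tilde\pi^{-1}(x),\overline \QQ_\ell)^{\bbT_{r:r+}^\sigma}=H_c^*(\tilde\pi^{-1}(x)/\bbT_{r:r+}^\sigma,\overline \QQ_\ell)$, and the quotient is again a disjoint union of affine spaces $\bbA^N$ (a finite subgroup of a vector group acts freely on $\bbA^N$ with quotient $\cong\bbA^N$, and the action only permutes the components of $\tilde\pi^{-1}(x)$). This gives concentration in degree $2N$ and the Frobenius eigenvalue $q^{nN}$, and it remains to show the quotient has exactly $\#\bbU_{r:r+}^\sigma$ components. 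Equivalently, one counts the $\chi\in\widehat{V^\sigma}$ that are trivial on the Lang monodromy of $W\hookrightarrow V$ and on $g\,\bbT_{r:r+}^\sigma\,g^{-1}$, and shows there are exactly $\#\bbU_{r:r+}^\sigma$ of them, by a Lang-cohomology exact sequence relating the $\sigma$-fixed points of $V$, of $W$, and $\bbT_{r:r+}^\sigma$. It is here that the ellipticity of $\bfT$ is used: ellipticity forces the toral (Cartan) directions of $V$ to be entirely absorbed by $\bbT_{r:r+}^\sigma$ --- for $\bfT$ with a nontrivial split part a residual direction would survive and the count would be strictly smaller --- leaving exactly the $N$ positive-root directions, i.e.\ $\#\bbU_{r:r+}^\sigma$ components.

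\textbf{Expected main obstacle.} Two points carry the real weight: (i) reducing from the possibly non-commutative graded group $\bbG_{(r-1)+:r+}$ to its commutative subquotients and propagating the ``disjoint union of $\bbA^N$'s'' conclusion through the resulting tower of affine fibrations, which is the source of the ``same cohomology as'' phrasing in the introduction; and (ii) the final component count, where one must bookkeep the $\sigma$-fixed points of $V$, $W$ and $\bbT_{r:r+}^\sigma$ and crucially invoke the ellipticity of $\bfT$. I expect (ii) to be the crux.
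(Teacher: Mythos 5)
Your Steps 1 and 2 are essentially the paper's own reduction: the fiber is identified with a Lang-type locus $\{v: v^{-1}\sigma(v)\in x\bbU_{r:r+}x^{-1}\}$ inside the commutative graded piece, and the parenthetical argument in Step 2 (decompose the pushforward along the $V^\sigma$-torsor into multiplicative local systems on the connected unipotent group $x\bbU_{r:r+}x^{-1}$, which have vanishing $H_c^*$ unless trivial) is exactly how the paper proceeds; your observation that in the commutative case the fiber is literally a finite union of translates of $\bbA^N$ is fine. The genuine gap is Step 3, which you correctly flag as the crux but do not prove, and whose proposed justification would not work as stated. The count of trivial local systems is \emph{not} a formal ``Lang-cohomology exact sequence'' bookkeeping of $V^\sigma$, $W^\sigma$ and $\bbT_{r:r+}^\sigma$, because the lift $x$ is not $\sigma$-rational: the relevant subgroups of $\bbG_{r:r+}^\sigma$ are the twisted groups $(x\bbT_{r:r+}x^{-1})^\sigma$ --- which by Claim \ref{claim:xTx} equals $x\{t\in\bbT_{r:r+}^\sigma : tu=ut\}x^{-1}$ for $u=\bar x^{-1}\sigma(\bar x)$ and is in general strictly smaller than $x\bbT_{r:r+}^\sigma x^{-1}$ --- and the image of the twisted norm map $\Nm(x\bbU_{r:r+}^{\sigma^m}x^{-1})$, which is not simply the span of positive-root directions and depends on $x$. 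The entire difficulty of the paper's Proposition \ref{prop:|A| elliptic} is to show that, despite this twisting, these two subgroups together exhaust $\bbG_{r:r+}^\sigma$ in the elliptic case: the loss in the toral part is measured by the span of the roots occurring in $u$ (Claim \ref{claim:xTx}) and is compensated by a gain in the norm image (Claim \ref{claim:xUx}), proved by an explicit computation with the $q$-polynomial $\varphi(z)=\sum a_i z^{q^i}$, traces over $\FF_{q^m}$, and ellipticity supplying a root $\delta$ with $\sigma^i(\delta)=-\beta$. This is precisely the content your sketch elides, and it is where the paper itself locates the obstruction (it could not even establish that the quotient is an affine space for arbitrary $x$, only for special lifts where $u=1$).

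A second, related problem: Theorem \ref{thm:fiber cohomology} carries no ellipticity hypothesis, and your parenthetical --- that for $\bfT$ with nontrivial split part ``a residual direction would survive and the count would be strictly smaller'' --- contradicts the very statement you are proving, which asserts the count is $\#\bbU_{r:r+}^\sigma$ for every $\bfT$ (this quantity is $1$ exactly in the elliptic case). The paper proves the fiberwise count directly only when $\bfT$ is elliptic, and then obtains the general case \emph{indirectly}, not fiber by fiber: it factors $R_{\bbT_r,\bbB_r}^{\bbG_r}$ as parabolic induction through the $F$-rational Levi $\bfL$ in which $\bfT$ is elliptic (Lemma \ref{lem:pInd factor}), applies the already-established elliptic case (Theorem \ref{thm:level lower}) inside $\bbL$, and compares dimensions with \eqref{eq:fiber cohomology} to force $|A|=\#\bbU_{r:r+}^\sigma$. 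So even granting an elliptic-case count, your outline is missing the argument for general $\bfT$; you would need either to carry out the twisted count in the presence of a split part (which your own heuristic suggests you expect to fail) or to add a bootstrapping step of the paper's kind.
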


We prove this theorem in Section \ref{subsec:fiber cohomology}. The techniques we employ also work to calculate the cohomology of the fibers of the depth-lower projections of parahoric Lusztig varieties $X_{\bbM_r,\bbP_r}^{\bbG_r}$. The answer is the same as in Theorem \ref{thm:fiber cohomology}, with $(M,P,N)$'s replacing $(T,B,U)$'s (preserving font).

Theorem \ref{thm:fiber cohomology} has some important immediate corollaries. If $\bfT$ is elliptic over $F$, then $\bbU_{r:r+}^\sigma = \{1\}$, and so we obtain the following result as a corollary of Theorem \ref{thm:fiber cohomology}.

\begin{theorem}\label{thm:level lower}
  If $\bfT$ is elliptic over $F$, then we have $(\bbG_r^\sigma \times \bbT_r^\sigma)$-equivariant isomorphisms 
  \begin{equation}\label{eq:level lower}
    H_c^i(X_{\bbT_r,\bbB_r}^{\bbG_r}, \overline \QQ_\ell)^{\bbT_{r:r+}^\sigma} \cong H_c^{i+2N}(X_{\bbT_{r-1},\bbB_{r-1}}^{\bbG_{r-1}}, \overline \QQ_\ell(N)) \qquad \text{for all $i \geq 0$}.
  \end{equation}
\end{theorem}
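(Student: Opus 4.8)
The plan is to deduce Theorem~\ref{thm:level lower} directly from Theorem~\ref{thm:fiber cohomology} by a Leray spectral sequence argument applied to the surjective projection $\tilde\pi \from X_{\bbT_r,\bbB_r}^{\bbG_r} \to X_{\bbT_{r-1},\bbB_{r-1}}^{\bbG_{r-1}}$. Since $\bfT$ is elliptic over $F$, we have $\bbU_{r:r+}^\sigma = \{1\}$ (the $\sigma$-fixed points of the unipotent radical vanish because no root of $\bfT$ is fixed by $\sigma$ up to the action of the relative Weyl group; concretely, ellipticity forces $\bbU_{r:r+}^\sigma$ to be trivial as in \cite{CI21-RT}), so Theorem~\ref{thm:fiber cohomology} tells us that for every geometric point $x$ of $X_{\bbT_{r-1},\bbB_{r-1}}^{\bbG_{r-1}}$,
\begin{equation*}
  H_c^i(\tilde\pi^{-1}(x), \overline\QQ_\ell)^{\bbT_{r:r+}^\sigma} = \begin{cases} \overline\QQ_\ell & i = 2N, \\ 0 & \text{otherwise,} \end{cases}
\end{equation*}
with $\sigma^n$ acting by $q^{nN}$, i.e.\ the $\bbT_{r:r+}^\sigma$-invariant part of the fiber cohomology is one-dimensional, concentrated in degree $2N$, and Tate-twisted by $N$.

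The key steps are as follows. First, I would pass to the quotient by the normal subgroup $\bbT_{r:r+}^\sigma \subset \bbT_r^\sigma$, which acts on $X_{\bbT_r,\bbB_r}^{\bbG_r}$ freely through the right $\bbT_r^\sigma$-action and commutes with $\tilde\pi$; taking $\bbT_{r:r+}^\sigma$-invariants of $H_c^*$ is exact (as we work with $\overline\QQ_\ell$-coefficients and a finite group), so it commutes with the Leray spectral sequence. Second, I would apply the Leray (Grothendieck) spectral sequence for $\tilde\pi$ and compactly supported cohomology: $E_2^{p,q} = H_c^p\!\bigl(X_{\bbT_{r-1},\bbB_{r-1}}^{\bbG_{r-1}}, R^q\tilde\pi_! \,\overline\QQ_\ell\bigr) \Rightarrow H_c^{p+q}(X_{\bbT_r,\bbB_r}^{\bbG_r}, \overline\QQ_\ell)$, then take $\bbT_{r:r+}^\sigma$-invariants. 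By the fiber computation and proper base change, the only nonzero $\bbT_{r:r+}^\sigma$-invariant higher direct image is $(R^{2N}\tilde\pi_!\,\overline\QQ_\ell)^{\bbT_{r:r+}^\sigma}$, which is the constant sheaf $\overline\QQ_\ell(-N)$ on $X_{\bbT_{r-1},\bbB_{r-1}}^{\bbG_{r-1}}$ (constant because the stalks are canonically $1$-dimensional and the monodromy is trivial — this needs the explicit description of the fibers, not just their cohomology, or alternatively an argument that the rank-one local system with the stated Frobenius eigenvalue must be constant since $X_{\bbT_{r-1},\bbB_{r-1}}^{\bbG_{r-1}}$ is... well, here one uses that the sheaf is pure and the eigenvalue is a fixed Weil number, forcing it to be a Tate twist of a finite-order character, and then the group actions pin it down). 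The spectral sequence therefore degenerates on the invariant part, giving $H_c^i(X_{\bbT_r,\bbB_r}^{\bbG_r},\overline\QQ_\ell)^{\bbT_{r:r+}^\sigma} \cong H_c^{i}\!\bigl(X_{\bbT_{r-1},\bbB_{r-1}}^{\bbG_{r-1}}, \overline\QQ_\ell(-N)\bigr)[-2N]$, which after re-indexing and dualizing the Tate twist is exactly \eqref{eq:level lower}. Third, I would check $(\bbG_r^\sigma \times \bbT_r^\sigma)$-equivariance: the actions of $\bbG_r^\sigma$ (by left translation) and of $\bbT_r^\sigma/\bbT_{r:r+}^\sigma = \bbT_{r-1}^\sigma$ (by right translation) descend compatibly through $\tilde\pi$, so all maps in sight are equivariant, and $\bbT_{r:r+}^\sigma$ acts trivially on both sides after taking invariants.

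The main obstacle is the constancy of $(R^{2N}\tilde\pi_!\,\overline\QQ_\ell)^{\bbT_{r:r+}^\sigma}$ as a sheaf (equivalently, trivial monodromy), which is what upgrades a stalkwise isomorphism into a genuine isomorphism of complexes and hence into the clean cohomology statement; merely knowing the dimensions and Frobenius eigenvalues of the stalks is not a priori enough. I expect this to follow from the proof of Theorem~\ref{thm:fiber cohomology} itself — there the fibers are shown to have the same \emph{cohomology} as disjoint unions of affine spaces $\bbA^N$, and if the argument in Section~\ref{subsec:fiber cohomology} in fact produces the invariant part as the cohomology of a \emph{single} such $\bbA^N$ varying nicely (a vector bundle) over the base, then constancy is automatic. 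Failing that, one can argue purely spectrally: the higher direct image is a rank-one summand of a constructible sheaf that is pure of weight $2N$ with all Frobenius eigenvalues equal to $q^N$, hence its semisimplification is $\overline\QQ_\ell(-N) \otimes \mathcal{L}$ for a torsion local system $\mathcal{L}$, and the $\bbG_r^\sigma \times \bbT_r^\sigma$-equivariance together with the fact that $X_{\bbT_{r-1},\bbB_{r-1}}^{\bbG_{r-1}}$ carries a transitive-enough group action forces $\mathcal{L}$ to be trivial. A secondary (routine) point to be careful about is the exactness of $(-)^{\bbT_{r:r+}^\sigma}$ interacting with the spectral sequence and with compact supports — this is fine since $\bbT_{r:r+}^\sigma$ is a finite $p'$-group acting on everything in sight, but it should be stated.
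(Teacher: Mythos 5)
Your overall route---Leray spectral sequence for $\tilde\pi$, take $\bbT_{r:r+}^\sigma$-invariants, and feed in Theorem~\ref{thm:fiber cohomology} together with $\bbU_{r:r+}^\sigma=\{1\}$ for elliptic $\bfT$---is exactly the deduction the paper intends (it records Theorem~\ref{thm:level lower} as an immediate corollary of Theorem~\ref{thm:fiber cohomology}), and you are right that the only non-formal point is upgrading the stalkwise computation to an identification of $(R^{2N}\tilde\pi_!\,\overline\QQ_\ell)^{\bbT_{r:r+}^\sigma}$ with $\overline\QQ_\ell(-N)$ as an equivariant sheaf. The problem is that neither of your two proposed ways of closing this point works. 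Route (a) presupposes that the fibers assemble into an affine (vector) bundle; the opening of Section~\ref{subsec:fiber cohomology} states explicitly that even the individual quotient fibers could not be shown to be affine spaces, so this structure cannot be assumed. Route (b) has two defects: a constructible sheaf with constant stalk rank one need not be lisse, so ``torsion local system'' is not yet meaningful; and the final step fails, since $\bbG_r^\sigma\times\bbT_r^\sigma$ is a \emph{finite} group acting on a positive-dimensional base---nowhere near transitively---and equivariance alone never forces a rank-one torsion local system to be trivial. (A smaller point: the reason $\bbU_{r:r+}^\sigma=\{1\}$ is that ellipticity forbids a nonempty $\sigma$-stable subset of $\Phi^+$, because the sum of the roots in such a subset would be a nonzero $\sigma$-invariant character trivial on the connected center; your phrasing via the relative Weyl group is off.)

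The gap can be closed with data you already have, and canonically. For a (compactifiable) morphism whose fibers have dimension $\le N$, the stalk of $R^{2N}\pi_!\,\overline\QQ_\ell$ at a geometric point is freely spanned by the classes of the $N$-dimensional irreducible components of the fiber, with $\sigma^n$ acting by $q^{nN}$ times the permutation of those components. Thus Theorem~\ref{thm:fiber cohomology}, with $\#\bbU_{r:r+}^\sigma=1$, says precisely that every fiber of $\pi\from X_{\bbT_r,\bbB_r}^{\bbG_r}/\bbT_{r:r+}^\sigma\to X_{\bbT_{r-1},\bbB_{r-1}}^{\bbG_{r-1}}$ has a \emph{single} top-dimensional component. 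The trace morphism $R^{2N}\pi_!\,\overline\QQ_\ell\to\overline\QQ_\ell(-N)$ (SGA~4, Exp.~XVIII) is then an isomorphism on every stalk, hence an isomorphism of sheaves; being canonical and compatible with base change, it is automatically $(\bbG_r^\sigma\times\bbT_r^\sigma)$-equivariant (and it also shows the kernel $\bbG_{r:r+}^\sigma$ acts trivially on the invariant fiber class, which your equivariance step quietly needs). With that in hand your Leray argument finishes as written. Two further remarks: if one only wanted the virtual-representation consequence (Corollary~\ref{cor:depth compatibility}), a Grothendieck--Lefschetz count along the fibers bypasses all monodromy questions, but the degree-by-degree statement used in Corollary~\ref{cor:infinite level} genuinely requires the sheaf-level identification; and your derived normalization $H_c^i(X_{\bbT_r,\bbB_r}^{\bbG_r},\overline\QQ_\ell)^{\bbT_{r:r+}^\sigma}\cong H_c^{i-2N}(X_{\bbT_{r-1},\bbB_{r-1}}^{\bbG_{r-1}},\overline\QQ_\ell)(-N)$ is the geometrically correct one (and is what the homology convention of Corollary~\ref{cor:infinite level} uses), so spell out the re-indexing to \eqref{eq:level lower} rather than waving at ``dualizing the Tate twist.''
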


In the above, $(N)$ denotes the Tate twist. By Theorem \ref{thm:level lower}, we immediately obtain the following corollaries.

\begin{corollary}\label{cor:depth compatibility}
  Assume $\bfT \subset \bfG$ is elliptic over $F$ and fix $s < r$. For any character $\theta \from \bbT_s^\sigma \to \overline \QQ_\ell^\times$, we have an isomorphism of virtual $\bbG_r^\sigma$-representations
  \begin{equation*}
    R_{\bbT_r,\bbB_r}^{\bbG_r}(\theta) \cong R_{\bbT_s,\bbB_s}^{\bbG_s}(\theta).
  \end{equation*}
\end{corollary}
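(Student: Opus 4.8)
The plan is to reduce at once to the case $s = r-1$ by induction on $r-s$, and then to deduce that case directly from Theorem \ref{thm:level lower}. Indeed, suppose the result is known whenever the two depths differ by one. Given $\theta$ factoring through $\bbT_s^\sigma$ with $s < r-1$, it in particular factors through $\bbT_{r-1}^\sigma$, so $R_{\bbT_r,\bbB_r}^{\bbG_r}(\theta) \cong R_{\bbT_{r-1},\bbB_{r-1}}^{\bbG_{r-1}}(\theta)$; viewing $\theta$ now as a character of $\bbT_{r-1}^\sigma$, it still factors through $\bbT_s^\sigma$, and the inductive hypothesis finishes the job. Throughout, the asserted isomorphism is understood as an equality in $\cR(\bbG_r^\sigma)$ after applying $\Inf_{\bbG_s^\sigma}^{\bbG_r^\sigma}$, so the task is to prove $R_{\bbT_r,\bbB_r}^{\bbG_r}(\theta) = \Inf_{\bbG_{r-1}^\sigma}^{\bbG_r^\sigma} R_{\bbT_{r-1},\bbB_{r-1}}^{\bbG_{r-1}}(\theta)$ when $\theta$ is inflated from $\bbT_{r-1}^\sigma$, i.e.\ is trivial on $\bbT_{r:r+}^\sigma = \ker(\bbT_r^\sigma \to \bbT_{r-1}^\sigma)$.

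First I would rewrite the virtual character $R_{\bbT_r,\bbB_r}^{\bbG_r}(\theta)$ using the triviality of $\theta$ on $\bbT_{r:r+}^\sigma$. Since $\bfT$ is a torus, $\bbT_r^\sigma$ is abelian, the left action of $\bbG_r^\sigma$ on $H_c^*(X_{\bbT_r,\bbB_r}^{\bbG_r},\overline \QQ_\ell)$ commutes with the right action of $\bbT_r^\sigma$, and $1 \to \bbT_{r:r+}^\sigma \to \bbT_r^\sigma \to \bbT_{r-1}^\sigma \to 1$ is exact by Lang's theorem. Grouping the sum over $t \in \bbT_r^\sigma$ in Definition \ref{def:induction} by cosets of $\bbT_{r:r+}^\sigma$ and averaging over each coset converts $\tfrac{1}{|\bbT_{r:r+}^\sigma|}\sum_{t_0 \in \bbT_{r:r+}^\sigma}(1,t_0)$ into the projector onto $\bbT_{r:r+}^\sigma$-invariants, yielding
\begin{equation*}
  R_{\bbT_r,\bbB_r}^{\bbG_r}(\theta)(g) = \frac{1}{|\bbT_{r-1}^\sigma|}\sum_{\bar t \in \bbT_{r-1}^\sigma}\Tr\bigl((g,\bar t);\, H_c^*(X_{\bbT_r,\bbB_r}^{\bbG_r},\overline \QQ_\ell)^{\bbT_{r:r+}^\sigma}\bigr)\,\overline{\theta(\bar t)},
\end{equation*}
where $\bbT_{r-1}^\sigma$ acts through the quotient.

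Next I would invoke Theorem \ref{thm:level lower}: since $\bfT$ is elliptic, $\bbU_{r:r+}^\sigma = \{1\}$, so Theorem \ref{thm:fiber cohomology} specializes to the $(\bbG_r^\sigma\times\bbT_r^\sigma)$-equivariant isomorphisms $H_c^i(X_{\bbT_r,\bbB_r}^{\bbG_r},\overline \QQ_\ell)^{\bbT_{r:r+}^\sigma}\cong H_c^{i+2N}(X_{\bbT_{r-1},\bbB_{r-1}}^{\bbG_{r-1}},\overline \QQ_\ell(N))$. In particular the $\bbG_r^\sigma$-action on the left-hand side factors through $\bbG_{r-1}^\sigma$. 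Passing to virtual representations, the Tate twist is cohomologically trivial for finite-group actions and the shift by $2N$ is even, so $\sum_i(-1)^iH_c^i(X_{\bbT_r,\bbB_r}^{\bbG_r})^{\bbT_{r:r+}^\sigma}$ equals $\Inf$ of $\sum_i(-1)^iH_c^i(X_{\bbT_{r-1},\bbB_{r-1}}^{\bbG_{r-1}})$ as a virtual $(\bbG_r^\sigma\times\bbT_r^\sigma)$-representation. Substituting this into the displayed formula recovers precisely the character of $\Inf_{\bbG_{r-1}^\sigma}^{\bbG_r^\sigma}R_{\bbT_{r-1},\bbB_{r-1}}^{\bbG_{r-1}}(\theta)$, which completes the case $s = r-1$ and hence the corollary.

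I do not expect a genuine obstacle here once Theorem \ref{thm:level lower} is in hand — the real content lies in Theorem \ref{thm:fiber cohomology}. The only point requiring care is the bookkeeping of which group acts through which quotient: the substance is that the $\bbG_r^\sigma$-equivariance in Theorem \ref{thm:level lower} forces the action on the $\bbT_{r:r+}^\sigma$-invariants to descend to $\bbG_{r-1}^\sigma$, which is exactly what makes the inflation identity hold. One could alternatively argue purely formally: Lemma \ref{lem:invariants} with $\bfM = \bfT$ and $s = r-1$ gives $R_{\bbT_{r-1},\bbB_{r-1}}^{\bbG_{r-1}}(\theta) = (R_{\bbT_r,\bbB_r}^{\bbG_r}(\theta))^{\bbG_{r:r+}^\sigma}$ since $\theta$ is $\bbT_{r:r+}^\sigma$-trivial, and the averaging step above together with Theorem \ref{thm:level lower} shows that $\bbG_{r:r+}^\sigma$ already acts trivially on the virtual representation $R_{\bbT_r,\bbB_r}^{\bbG_r}(\theta)$; but the direct trace computation above is self-contained.
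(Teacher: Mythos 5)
Your proposal is correct and follows the same route as the paper, which states the corollary as an immediate consequence of Theorem \ref{thm:level lower}; your contribution is simply to spell out the routine bookkeeping (averaging over $\bbT_{r:r+}^\sigma$ to produce the invariants projector, the irrelevance of the Tate twist and the even shift $2N$ for the virtual $(\bbG_r^\sigma\times\bbT_r^\sigma)$-module structure, and induction on $r-s$), all of which is sound. The only caveat is in your optional side remark: Lemma \ref{lem:invariants} with $s=r-1$ takes invariants under $\bbG_{(r-1)+:r+}^\sigma$, which need not coincide with $\bbG_{r:r+}^\sigma$ if the Moy--Prasad filtration jumps between $(r-1)+$ and $r$, but this does not affect your main argument.
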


Following \cite{Lus79}, define $H_i(S, \overline \QQ_\ell) \colonequals H_c^{2\dim(S)-i}(S, \overline \QQ_\ell(\dim(S)))$ for any smooth $\overline \FF_q$-scheme of pure dimension. 

\begin{corollary}\label{cor:infinite level}
  Assume $\bfT \subset \bfG$ is elliptic over $F$. We have a natural embedding
  \begin{equation*}
    H_i(X_{\bbT_{r-1},\bbB_{r-1}}^{\bbG_{r-1}}, \overline \QQ_\ell) \hookrightarrow H_c^i(X_{\bbT_r,\bbB_r}^{\bbG_r}, \overline \QQ_\ell).
  \end{equation*}
  For $X_{\bbT_\infty,\bbB_\infty}^{\bbG_\infty} \colonequals \varprojlim_r X_{\bbT_r,\bbB_r}^{\bbG_r}$, setting
  \begin{equation*}
    H_i(X_{\bbT_\infty,\bbB_\infty}^{\bbG_\infty}, \overline \QQ_\ell) \colonequals \varinjlim_r H_i(X_{\bbT_r,\bbB_r}^{\bbG_r}, \overline \QQ_\ell)
  \end{equation*}
  therefore defines $\ell$-adic homology groups for the infinite-dimensional $\overline \FF_q$-scheme $X_{\bbT_\infty,\bbB_\infty}^{\bbG_\infty}$. Moreover, on the category of smooth representations of $\bbT_\infty^\sigma$, it makes sense to define a functor $R_{\bbT_\infty,\bbB_\infty}^{\bbG_\infty}$ analogously to Definition \ref{def:induction}, and for any character $\theta$ of $\bbT_r^\sigma$, we have an equality of $\bbG_\infty^\sigma$-representations
  \begin{equation*}
    R_{\bbT_\infty,\bbB_\infty}^{\bbG_\infty}(\theta) = R_{\bbT_r,\bbB_r}^{\bbG_r}(\theta)
  \end{equation*}
\end{corollary}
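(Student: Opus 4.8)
The plan is to derive the corollary from Theorem~\ref{thm:level lower}, the point being that all the geometric input is already there and what remains is to reindex compactly supported cohomology as homology and to organize a direct limit. Since $\bfT$ is elliptic over $F$ we have $\bbU_{r:r+}^\sigma=\{1\}$, so $\#\bbU_{r:r+}^\sigma=1$ and Theorem~\ref{thm:level lower} applies, giving $(\bbG_{r-1}^\sigma\times\bbT_{r-1}^\sigma)$-equivariant isomorphisms $H_c^i(X_{\bbT_r,\bbB_r}^{\bbG_r},\overline\QQ_\ell)^{\bbT_{r:r+}^\sigma}\cong H_c^{i+2N}(X_{\bbT_{r-1},\bbB_{r-1}}^{\bbG_{r-1}},\overline\QQ_\ell(N))$; these $\bbT_{r:r+}^\sigma$-invariants equal the $\bbT_{r:r+}^\sigma$-coinvariants, $\bbT_{r:r+}^\sigma$ being finite.

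First I would recast this as a statement about homology. The morphism $\tilde\pi$ has relative dimension $N=\#\Phi(\bfG,\bfT)/2$: its fibers, computed as in the proof of Lemma~\ref{lem:invariants}, are preimages under the Lang isogeny of the abelian group scheme $\bbG_{(r-1)+:r+}$ of a translate of $\sigma(\bbU_{(r-1)+:r+})$, an affine space of dimension $N$. Thus $\dim X_{\bbT_r,\bbB_r}^{\bbG_r}=\dim X_{\bbT_{r-1},\bbB_{r-1}}^{\bbG_{r-1}}+N$, and substituting into the normalization $H_i(S)=H_c^{2\dim S-i}(S,\overline\QQ_\ell(\dim S))$ one sees that the degree shift $+2N$ and the twist $(N)$ in Theorem~\ref{thm:level lower} turn the $(\dim X_{r-1})$-normalization precisely into the $(\dim X_r)$-normalization. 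This gives a canonical $(\bbG_{r-1}^\sigma\times\bbT_{r-1}^\sigma)$-equivariant isomorphism
\[
  H_i\bigl(X_{\bbT_{r-1},\bbB_{r-1}}^{\bbG_{r-1}},\overline\QQ_\ell\bigr)\ \xrightarrow{\ \sim\ }\ H_i\bigl(X_{\bbT_r,\bbB_r}^{\bbG_r},\overline\QQ_\ell\bigr)^{\bbT_{r:r+}^\sigma},
\]
and composing with the inclusion of invariants yields the asserted natural embedding. Since this isomorphism is equivariant for all of $\bbT_{r-1}^\sigma$, passing to further $\bbT$-invariants commutes with it, and iterating gives $(\bbG_r^\sigma\times\bbT_r^\sigma)$-equivariant identifications $H_i(X_{\bbT_s,\bbB_s}^{\bbG_s})^{\bbT_{r+:s+}^\sigma}\cong H_i(X_{\bbT_r,\bbB_r}^{\bbG_r})$ for $s\geq r$, compatible with the transition maps. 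So the maps in $\varinjlim_r H_i(X_{\bbT_r,\bbB_r}^{\bbG_r})$ are injective, $H_i(X_{\bbT_\infty,\bbB_\infty}^{\bbG_\infty}):=\varinjlim_r H_i(X_{\bbT_r,\bbB_r}^{\bbG_r})$ is well defined, and it is a smooth $\bbG_\infty^\sigma\times\bbT_\infty^\sigma$-module, each term being inflated from $\bbG_r^\sigma\times\bbT_r^\sigma$.

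For the functor statement, given a character $\theta$ of $\bbT_r^\sigma$ I would inflate it to a smooth character of $\bbT_\infty^\sigma$ and set $R_{\bbT_\infty,\bbB_\infty}^{\bbG_\infty}(\theta):=\sum_i(-1)^i H_i(X_{\bbT_\infty,\bbB_\infty}^{\bbG_\infty},\overline\QQ_\ell)\otimes_{\overline\QQ_\ell[\bbT_\infty^\sigma]}\theta$, the evident analogue of Definition~\ref{def:induction} for smooth $\bbT_\infty^\sigma$-representations (for $\theta$ of finite level this $\theta$-coinvariant agrees with the trace-formula expression obtained by averaging over $\bbT_s^\sigma$ for any $s$ with $\theta$ of level at most $s$). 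Because $(-)\otimes_{\overline\QQ_\ell[\bbT_\infty^\sigma]}\theta$ commutes with filtered colimits and $\bbT_\infty^\sigma$ acts on $H_i(X_{\bbT_s,\bbB_s}^{\bbG_s})$ through $\bbT_s^\sigma$, this equals $\varinjlim_{s\geq r}R_{\bbT_s,\bbB_s}^{\bbG_s}(\theta)$, the transition maps being inflation along $\bbG_s^\sigma\twoheadrightarrow\bbG_{s-1}^\sigma$; by Corollary~\ref{cor:depth compatibility} each such inflation is an isomorphism of virtual representations, so the colimit collapses to $R_{\bbT_r,\bbB_r}^{\bbG_r}(\theta)$, as claimed.

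I expect no genuine obstacle: the only nontrivial inputs are Theorems~\ref{thm:fiber cohomology}/\ref{thm:level lower} and Corollary~\ref{cor:depth compatibility}, already available. The one place to be attentive is matching the shift-and-twist of Theorem~\ref{thm:level lower} to the homology normalization in the first step, which rests on the relative-dimension computation $\dim X_{\bbT_r,\bbB_r}^{\bbG_r}=\dim X_{\bbT_{r-1},\bbB_{r-1}}^{\bbG_{r-1}}+N$ and a careful accounting of Tate twists; in particular one uses that the residual $\bbG_{(r-1)+:r+}^\sigma$-action on $H_c^i(X_{\bbT_r,\bbB_r}^{\bbG_r})^{\bbT_{r:r+}^\sigma}$ is trivial, which is part of the equivariance assertion of Theorem~\ref{thm:level lower}.
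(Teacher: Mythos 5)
Your proposal is correct and follows essentially the same route as the paper, which derives this corollary immediately from Theorem \ref{thm:level lower} (plus Corollary \ref{cor:depth compatibility} for the last assertion); you have simply spelled out the shift-and-twist bookkeeping that converts \eqref{eq:level lower} into the homology normalization $H_i(S)=H_c^{2\dim S-i}(S,\overline\QQ_\ell(\dim S))$, using the relative dimension $N$ of $\tilde\pi$. The only cosmetic point is that your embedding naturally lands in $H_i(X_{\bbT_r,\bbB_r}^{\bbG_r},\overline\QQ_\ell)$ (i.e.\ in a reindexed compactly supported cohomology group), which is the form actually needed to build the direct limit, and your identification of invariants with coinvariants and the collapse of the colimit via Corollary \ref{cor:depth compatibility} are exactly the intended arguments.
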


\begin{remark}\label{rem:relation to lusztig}
  In 1979, Lusztig conjectured \cite{Lus79} that there should exist reasonable $p$-adic Deligne--Lusztig spaces. Lusztig studied this in \textit{op.\ cit. }for $\bfG$ the norm-1 elements of division algebras, and this was later formalized and generalized by Boyarchenko \cite{Boy12} to $\bfG$ a division algebra. For other inner forms of $\GL_n$, this was studied by the author and Ivanov \cite{CI21-MA,CI_loopGLn}. In these settings, representation-theoretic calculations proceed by establishing:
  \begin{enumerate} 
    \item The $p$-adic Deligne--Lusztig space is a disjoint union of infinite-dimensional parahoric Deligne--Lusztig varieties $X_\infty$.
    \item $\ell$-adic homology groups of $X_\infty$ can be defined as a direct limit of $\ell$-adic homology groups of finite-depth parahoric Deligne--Lusztig varieties $X_r$.
  \end{enumerate}
  For $\GL_n$, elliptic unramified maximal tori are automatically Coxeter, but this is no longer the case for general connected reductive groups $\bfG$; on the other hand, progress on (1) has only been made in the Coxeter setting. For $\bfG = \GSp$ and $\bfT$ Coxeter, Takamatsu established (1) in \cite{T23}. For $\bfG$ unramified of classical type and $\bfT$ Coxeter, Ivanov proved (1) in \cite{I23arc,I23orbit}. In all these settings, the parahoric schemes $X_\infty$ are examples of $X_{\bbT_\infty,\bbB_\infty}^{\bbG_\infty}$, hence Corollary \ref{cor:infinite level} resolves (2) and endows the above studied infinite-dimensional $p$-adic Deligne--Lusztig spaces with $\ell$-adic homology groups. This therefore generalizes the definition of homology in \cite{Lus79,Boy12,CI21-MA,CI_loopGLn} to Ivanov's setting in \cite{I23orbit} and relates results on the cohomology of finite-depth parahoric Deligne--Lusztig varieties---for example of the author and Oi \cite{CO21}---to the setting of Lusztig's 1979 conjecture.
\end{remark}

\begin{remark}\label{rem:indirect depth compat}
  We offer an indirect alternate argument to the discussion in Remark \ref{rem:relation to lusztig}. Another way to endow the $p$-adic Deligne--Lusztig spaces in Ivanov's decomposition result \cite{I23orbit} (for $\bfG$ of classical type and Coxeter $\bfT$) with $\ell$-adic homology groups is to use Dudas--Ivanov \cite{DI20} (scalar product formula for Coxeter $\bfT$ and $q>5$) in tandem with the results of the author with Oi \cite{CO21} (arbitrary $\bfT$, $q \gg 0$, genericity condition on $\theta$) which characterizes the irreducible representations $R_{\bbT_r}^{\bbG_r}(\theta)$. (As mentioned in the introduction, in forthcoming work, the author and Oi will remove the genericity condition in \cite{CO21}.) \textit{A posteriori}, we then obtain $R_{\bbT_r}^{\bbG_r}(\theta) \cong R_{\bbT_{r+1}}^{\bbG_{r+1}}(\theta)$ when $q \gg 0$.
\end{remark}

\subsection{The cohomology of the fibers of $\tilde \pi$}\label{subsec:fiber cohomology}

The purpose of this section is to prove Theorem \ref{thm:fiber cohomology}. The simplest reason for this theorem to hold would be if $\tilde \pi^{-1}(x)/\bbT_{r:r+}^\sigma \cong \bbA^d$ (as usual, up to perfection). This is the case when $\bfG$ is a division algebra \cite[Lemma 4.7]{Boy12} and when $\bfG$ is more generally any inner form of $\GL_n$ \cite[Proposition 7.6]{CI21-MA}. While this is true at least for some $x \in X_{\bbT_{r-1},\bbB_{r-1}}^{\bbG_{r-1}}$ (for example if the image of $x$ in $\bbG_0 = \bbG_{0:0+}$ is $\FF_q$-rational), despite our best efforts over several years, we were not able to prove this isomorphism for arbitrary $x$. In the following, we focus instead on the statement of Theorem \ref{thm:fiber cohomology}, which requires only a calculation about the cohomology of $\tilde \pi^{-1}(x)$, not its explicit geometry.

For notational convenience, let us prove the theorem for $X_{0:r+} \colonequals X_{\bbT_r,\sigma^{-1}(\bbB_r)}^{\bbG_r}$. Set $X_{0:r} \colonequals \{g \in \bbG_{0:r} : g^{-1}\sigma(g) \in \bbU_{0:r}\}$. Choose any $\tilde x \in X_{0:r+}$ over $x \in X_{0:r}$. Denote by $\bar x$ the image of $x$ in $\bbG_{0:0+}$. We have a morphism
\begin{equation*}
  X_{0:r+}^{\bbG_r}/\bbT_{r:r+}^\sigma \to \{g \in \bbG_{0:r+} : g^{-1} \sigma(g) \in \bbU_{0:r+}\bbB_{r:r+}\}/\bbB_{r:r+} \cap \sigma^{-1}(\bbB_{r:r+})
\end{equation*}
whose fibers are isomorphic to the affine space $\bbU_{r:r+} \cap \sigma^{-1}(\bbU_{r:r+})$. Since $\bbB_{r:r+} \cap \sigma^{-1}(\bbB_{r:r+})$ is also an affine space, we see that the cohomology of $\tilde \pi^{-1}(x)/\bbT_{r:r+}^\sigma$ is, up to a shift of $2 \dim \bbT_{r:r+}$, equal to the cohomology of the fibers of 
\begin{equation*}
  \pi \from \{g \in \bbG_{0:r+} : g^{-1} \sigma(g) \in \bbU_{0:r+}\bbB_{r:r+}\} \to \{g \in \bbG_{0:r} : g^{-1} \sigma(g) \in \bbU_{0:r}\}.
\end{equation*}

For any $x_r \in \bbG_{r:r+}$, we have
\begin{align*}
  (\tilde x x_r)^{-1} \sigma(\tilde x x_r) \in \bbU_{0:r+}\bbB_{r:r+} 
  &\Longleftrightarrow x_r^{-1} \tilde x^{-1} \sigma(\tilde x) \sigma(x_r) \sigma(\tilde x)^{-1} \tilde x \in \bbB_{r:r+} \\
  &\Longleftrightarrow (\tilde x x_r \tilde x^{-1})^{-1} \sigma(\tilde x x_r \tilde x^{-1}) \in \tilde x \bbB_{r:r+} \tilde x^{-1}.
\end{align*}
Note that $\tilde x \bbU_{r:r+} \tilde x^{-1}$ only depends on $x$ (in fact, only on $\bar x$); hence we write $x \bbU_{r:r+} x^{-1}$ for this subgroup. We have shown that we have an isomorphism
\begin{equation*}
  \pi^{-1}(x) \cong \{x_r \in\bbG_{r:r+} : x_r^{-1} \sigma(x_r) \in x \bbB_{r:r+} x^{-1}\}.
\end{equation*}
The Lang map
\begin{equation*}
  \bbG_{r:r+} \to\bbG_{r:r+}, \qquad g_r \mapsto g_r^{-1} \sigma(g_r)
\end{equation*}
restricts to a morphism
\begin{equation*}
  \varphi \from \pi^{-1}(x) \to x \bbB_{r:r+} x^{-1},
\end{equation*}
and so we see that
\begin{equation*}
  \varphi_! \overline \QQ_\ell = \bigoplus_{\chi \in (\bbG_{r:r+}^\sigma)^\wedge} i^* \cL_\chi
\end{equation*}
where we write $i \from x \bbB_{r:r+} x^{-1} \hookrightarrow\bbG_{r:r+}$. By construction, each sheaf $i^* \cL_\chi$ is a multiplicative local system on the connected algebraic group $x \bbB_{r:r+} x^{-1}$ which is an affine space, and we therefore see that
\begin{equation*}
  H_c^i(x \bbB_{r:r+} x^{-1}, i^* \cL_\chi) = \begin{cases}
    \overline \QQ_\ell & \text{if $i = 2\dim \bbB_{r:r+}$ and $i^* \cL_\chi \cong \overline \QQ_\ell$,} \\
    0 & \text{otherwise.}
  \end{cases}
\end{equation*}
From this, we may conclude
\begin{equation}\label{eq:fiber cohomology}
  H_c^i(\pi^{-1}(x), \overline \QQ_\ell) = \bigoplus_\chi H_c^i(x \bbB_{r:r+} x^{-1}, i^* \cL_\chi) = \begin{cases}
    \overline \QQ_\ell^{\oplus\#A} & \text{if $i = 2\dim \bbB_{r:r+}$,} \\
    0 & \text{otherwise,}
  \end{cases}
\end{equation}
where
\begin{equation*}
  A = \{\chi \in (\bbG_{r:r+}^\sigma)^\wedge : i^* \cL_\chi \cong \overline \QQ_\ell\}.
\end{equation*}

Let $m$ be such that $\sigma^m(x) = x$ and $\sigma^m(\bbU_{r:r+}) = \bbU_{r:r+}$ (can choose $m$ to be minimal). Since $\cL_\chi$ is $\sigma$-equivariant by construction, it follows that if $i^* \cL_\chi \cong \overline \QQ_\ell$, then $(\sigma^j)^* i^* \cL_\chi \cong \overline \QQ_\ell$ for all $j$. Hence $\cL_\chi$ pulls back to the constant local system on
\begin{equation}\label{eq:xBx}
  x \bbB_{r:r+} x^{-1} \cdot \sigma(x \bbB_{r:r+} x^{-1}) \cdots \sigma^{m-1}(x \bbB_{r:r+} x^{-1}) \subseteq \bbG_{r:r+}.
\end{equation}

\begin{proposition}\label{prop:|A| elliptic}
  If $\bfT \subset \bfG$ is elliptic over $F$, then $|A| = 1$.
\end{proposition}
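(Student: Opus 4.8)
The plan is to prove the identity of $\FF_q$-vector spaces
\[
  \bbG_{r:r+}^\sigma = (x\bbT_{r:r+}x^{-1})^\sigma \cdot \Nm\bigl(x\bbU_{r:r+}^{\sigma^m}x^{-1}\bigr),
\]
which is precisely $|A|=1$ in view of the formula for $|A|$ above (recall $\sigma$ and $\ad(\bar x)$ are $\FF_q$-linear on $\bbG_{r:r+}$). Write $\bbU^x\colonequals x\bbU_{r:r+}x^{-1}$ and $\bbT^x\colonequals x\bbT_{r:r+}x^{-1}$, so that $x\bbU_{r:r+}^{\sigma^m}x^{-1}=(\bbU^x)^{\sigma^m}$ since $\sigma^m\bar x=\bar x$. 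With $m$ as chosen, $\sigma^m$ fixes $\bbU^x$, $\bbT^x$ and each $\sigma^k(\bbU^x)$, so $\sum_{k\ge 0}\sigma^k(\bbU^x)=\sum_{k=0}^{m-1}\sigma^k(\bbU^x)$ is a finite sum; and $T_0\colonequals\bigcap_{k\ge 0}\sigma^k(\bbT^x)=\bigcap_{k=0}^{m-1}\sigma^k(\bbT^x)$ is a $\sigma$-stable subspace with $T_0^\sigma=(\bbT^x)^\sigma$ (any $\sigma$-fixed vector of $\bbT^x$ lies in every $\sigma^k(\bbT^x)$).

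The reduction I would make is to the single $\overline\FF_q$-level identity
\[
  \bbG_{r:r+}=T_0+\sum_{k\ge 0}\sigma^k(\bbU^x).
\]
Granting it: $T_0$ and each $\sigma^k(\bbU^x)$ are defined over $\FF_{q^m}$, so it descends to $\bbG_{r:r+}^{\sigma^m}=T_0^{\sigma^m}+\sum_k\sigma^k\bigl((\bbU^x)^{\sigma^m}\bigr)$, since an $\FF_{q^m}$-subspace of $\bbG_{r:r+}^{\sigma^m}$ whose $\overline\FF_q$-span is all of $\bbG_{r:r+}$ must be everything. Applying the $\FF_q$-linear surjection $\Nm=\Nm_{\sigma^m/\sigma}\from\bbG_{r:r+}^{\sigma^m}\to\bbG_{r:r+}^\sigma$, which satisfies $\Nm\circ\sigma=\Nm$ and carries $T_0^{\sigma^m}$ into $T_0^\sigma=(\bbT^x)^\sigma$, collapses all the $\sigma$-translates and yields $\bbG_{r:r+}^\sigma=(\bbT^x)^\sigma+\Nm\bigl((\bbU^x)^{\sigma^m}\bigr)$, which is what we want.

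It remains to establish the displayed $\overline\FF_q$-identity; this is where ellipticity of $\bfT$ enters. Let $\vartheta$ be the finite-order automorphism of $X^*(\bfT)$ through which $\sigma$ permutes the root subgroups $\bfU_\alpha$. Ellipticity forces $(\bbZ\Phi(\bfG,\bfT)\otimes\bbR)^\vartheta=0$, so every $\vartheta$-orbit $O\subset\Phi(\bfG,\bfT)$ meets both $\Phi^+$ and $\Phi^-$ (otherwise $\sum_{\alpha\in O}\alpha$ would be a nonzero $\vartheta$-fixed vector); in particular $\bigcup_{k\ge 0}\vartheta^k(\Phi^+)=\Phi$, the same circle of ideas behind the vanishing $\bbU_{r:r+}^\sigma=\{1\}$ recorded before Theorem~\ref{thm:level lower}. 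Hence $\sum_k\sigma^k(\bbU_{r:r+})=\bigoplus_{\alpha\in\Phi}\bbU_{\alpha,r:r+}$, and adding $\bbT_{r:r+}$ recovers $\bbG_{r:r+}$; this settles the case $\bar x$ $\sigma$-fixed (where $\bbT^x$ is automatically $\sigma$-stable, so $T_0=\bbT^x$). For general $x$ one transports this through $\ad(\bar x)$, using the fixed ordering on $\Phi^+$ and the factorization $u=\bar x^{-1}\sigma(\bar x)=\prod_{\alpha\in\Phi^+}u_\alpha\in\bbU_{0:0+}$: conjugation by $\bar x$ and by $u$ is triangular for the filtration of $\bbG_{r:r+}$ by root height, which allows running the span computation weight space by weight space and checking that together $T_0$ and the $\sigma^k(\bbU^x)$ miss nothing --- the same bookkeeping as in the analogous vanishing arguments of \cite{Lus04,Sta09,CI21-RT}.

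The main obstacle is precisely this last transport step. As $x$ ranges over all of $X_{\bbT_{r-1},\bbB_{r-1}}^{\bbG_{r-1}}$, only the base-point orbit has $u=\bar x^{-1}\sigma(\bar x)=1$, so in general $u\ne 1$ and $\ad(\bar x)$ does not respect the $\bbT_0$-weight grading on $\bbG_{r:r+}$; in particular $\bbT^x$ need not be $\sigma$-stable, which is exactly why $T_0$ must be introduced rather than $\bbT^x$ itself. Controlling the off-diagonal contributions of $\ad(\bar x)$ and $\ad(u)$ at once --- i.e.\ showing the span identity survives these two twists simultaneously --- is the technical heart of the argument, and is why one cannot simply conjugate the $\bar x$-fixed case into the general one.
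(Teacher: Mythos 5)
Your reduction is sound as far as it goes: since $\Nm\bigl((\bbU^x)^{\sigma^m}\bigr)\subseteq\sum_{k}\sigma^k(\bbU^x)$ and $(\bbT^x)^\sigma\subseteq T_0$, while $\bbG_{r:r+}^\sigma$ spans $\bbG_{r:r+}$ over $\overline\FF_q$, your displayed $\overline\FF_q$-identity is in fact \emph{equivalent} to the statement $|A|=1$, and your descent-plus-norm argument correctly passes from one to the other. But this means the reformulation has not reduced the difficulty at all, and the only case of the identity you actually establish is $u=\bar x^{-1}\sigma(\bar x)=1$, i.e.\ $\bar x$ rational, where $T_0=\bbT^x$ and the orbit argument on $\Phi^+$ suffices. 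For general $x$ you assert that "the same bookkeeping" as in the vanishing arguments of \cite{Lus04,Sta09,CI21-RT} handles the twists by $\ad(\bar x)$ and $\ad(u)$, and then your final paragraph concedes that precisely this transport step is the unresolved technical heart. That concession is the gap: the proposition \emph{is} that step, and no argument for it is given.

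Moreover, the suggested route (triangularity of $\ad(\bar x)$, $\ad(u)$ for the height filtration, working weight space by weight space) cannot work as stated, because the elements that actually intervene are the partial products $u_k=u\,\sigma(u)\cdots\sigma^{k-1}(u)$, and since $\sigma$ permutes the root subgroups without fixing $\Phi^+$ (ellipticity!), these products are not unipotent elements of a fixed Borel; already for $\SL_2$ with the nonsplit torus, $u_2=(1+cE)(1-c^qF)$ mixes all three root/torus directions, so $\sigma^k(\bbU^x)$ and $T_0$ are not graded pieces for any common weight decomposition. This is exactly why the paper's proof does not attempt a geometric span identity over $\overline\FF_q$ but instead works with $\FF_q$-dimensions of the fixed-point and norm subgroups: Claim~\ref{claim:xTx} computes $\dim_{\FF_q}(x\bbT_{r:r+}x^{-1})^\sigma$, and Claim~\ref{claim:xUx}(2) proves the compensating lower bound \eqref{eq:beta inequality} by a genuinely arithmetic argument --- ellipticity supplies $\delta\in\Phi^+$ with $\sigma^i(\delta)=-\beta$, and the nonvanishing of the coefficient $a_l$ in $\varphi(z)=\sum_i a_i z^{q^i}$ together with the comparison of trace kernels shows the norm image grows by one $\FF_q$-dimension for each independent direction in which $u$ is nontrivial. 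None of this (nor any substitute for it) appears in your proposal, so the general case remains unproven.
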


\begin{proof}
  To show the proposition, we will prove that equality holds in \eqref{eq:xBx}. As already noted, $x \bbB_{r:r+} x^{-1}$ depends only on $\bar x \in \bbG_{0:0+}$. By Bruhat decomposition, we have $\bar x \in \bar u w' \bbB_{0:0+}$ for some $\bar u \in \bbU_{0:r+}$ and $w' \in N_{\bbT_{0:0+}}(\bbG_{0:0+})$. Hence we have $x \bbB_{r:r+} x^{-1} = \bar u w' \bbB_{r:r+} w'{}^{-1} \bar u^{-1}$, and we may in fact assume $\bar x = \bar u w$. The adjoint action of $\bbT_{0:0+}$ on $\bbG_{r:r+}$ gives us a direct sum decomposition $\bbG_{r:r+} = \bbT_{r:r+} \oplus \bbU_{r:r+}^A \oplus \bbU_{r:r+}^{\Phi \smallsetminus A}$ for any $A \subset \Phi$, where we set
  \begin{equation*}
    \bbU_{r:r+}^{A} \colonequals \prod_{\alpha \in A} \bbU_{r:r+}^\alpha.
  \end{equation*} 
  Write $\varphi_A \from \bbG_{r:r+} \to \bbU_{r:r+}^A$ for the associated orthogonal projection. We observe that the composition
  \begin{equation*}
    \bar u w' \bbU_{r:r+}^A w^{-1} \bar u^{-1} \hookrightarrow \bbG_{r:r+} \stackrel{\varphi_{\Ad(w')A}}{\to} w' \bbU_{r:r+}^A w^{-1}
  \end{equation*}
  is surjective. Indeed, for any $x_\alpha \in \bbU_{r:r+}^\alpha$, we have $\bar u w' x_\alpha w'{}^{-1} \bar u^{-1} = [\bar u w', x_\alpha] x_\alpha$, and the $\alpha$-projection of $[\bar u w', x_\alpha]$ is zero; this shows surjectivity on $w' \bbU_{r:r+}^\alpha w'{}^{-1}$. Since $\sigma(\bbT_{0:0+}) = \bbT_{0:0+}$, this implies that for any $j$, the composition
  \begin{equation*}
    \sigma^j(x\bbU_{r:r+}^A x^{-1}) \hookrightarrow \bbG_{r:r+} \stackrel{\varphi_{\sigma^j(\Ad(w')A)}}{\to} \sigma^j(w' \bbU_{r:r+}^A w'{}^{-1})
  \end{equation*}
  is surjective. Writing $\sigma(g) = w \sigma_0(g) w^{-1}(g)$, the above results then translate to having $\sigma^j(w' \bbU_{r:r+}^A w') = \Ad(w^j w') \bbU_{r:r+}^A$ and 
  \begin{equation*}
    \varphi_{\Ad(w^jw')(A)}(\sigma^j(x \bbU_{r:r+}^A x^{-1})) = \Ad(w^j w') \bbU_{r:r+}^A.
  \end{equation*}

  Consider now
  \begin{equation*}
    A_j \colonequals \Phi^+ \cap (\cap_{1 \leq i \leq j} \Ad(w'{}^{-1} w^i w')^{-1} \Phi^-).
  \end{equation*}
  We claim that for any $j,n \geq 0$ with $j + n \leq m-1$, the image of the span of
      \begin{equation}\label{eq:n}
        x\bbU_{r:r+}^{A_j}x^{-1}, \quad \sigma(x \bbU_{r:r+}^{A_{j+1}} x^{-1}), \quad \ldots, \quad \sigma^{n}(x \bbU_{r:r+}^{A_{j+n}} x^{-1})
      \end{equation}
  under $\varphi_{A}$ is equal to $\bbU_{r:r+}^A$, where $A = \cup_{0 \leq k \leq n} \Ad(w^k w')(A_{j+k})$. We induct on $n$.

  The base case is $n=0$, and this is already clear from the first paragraph of this proof. Assume the result holds for $n-1$ so that the image of the span of
  \begin{equation*}
    x \bbU_{r:r+}^{A_{j+1}} x^{-1}, \sigma(x \bbU_{r:r+}^{A_{j+2}} x^{-1}), \ldots, \sigma^{n-1}(x \bbU_{r:r+}^{A_{j+n}} x^{-1})
  \end{equation*}
  under $\varphi_{A'}$ is equal to $\bbU_{r:r+}^{A'}$, where $A' = \cup_{0 \leq k \leq n-1} \Ad(w^k w')(A_{j+1+k})$. This implies that the image of the span of
  \begin{equation*}
    \sigma(x \bbU_{r:r+}^{A_{j+1}} x^{-1}), \sigma^2(x \bbU_{r:r+}^{A_{j+2}} x^{-1}), \ldots, \sigma^{n}(x \bbU_{r:r+}^{A_{j+n}} x^{-1})
  \end{equation*}
  under $\varphi_{\Ad(w)A'}$ is equal to $\bbU_{r:r+}^{\Ad(w)A'}$. This implies that the image of the span of
    \begin{equation}\label{e:Ad(x^{-1})}
    x^{-1}\sigma(x \bbU_{r:r+}^{A_{j+1}} x^{-1})x, x^{-1}\sigma^2(x \bbU_{r:r+}^{A_{j+2}} x^{-1})x, \ldots, x^{-1}\sigma^{n}(x \bbU_{r:r+}^{A_{j+n}} x^{-1})x
    \end{equation}
  under $\Ad(x^{-1}) \circ \varphi_{\Ad(w)(A')}$ is equal to $x^{-1}\bbU_{r:r+}^{\Ad(w)(A')}x = \Ad(w'{}^{-1} \bar u^{-1} w') \bbU_{r:r+}^{\Ad(w'{}^{-1}w)(A')}$. By construction, $\Ad(w'{}^{-1} w)(A') = \cup_{1 \leq k \leq n} \Ad(w'{}^{-1} w^k w')(A_{j+k})$ has disjoint intersection with $A_j$. Hence $\bbU_{r:r+}^{A_j}$ has trivial intersection with the span of \eqref{e:Ad(x^{-1})}, which proves linear independence. Therefore the image of the span of \eqref{eq:n} under $\varphi_{A}$ is equal to $\bbU_{r:r+}^A$.

  The ellipticity of $w$ implies that $\Phi^- = \cup_{1 \leq k \leq m-1} \Ad(w'{}^{-1} w^k w')(A_k)$. By the above, we have that the image of the span of 
  \begin{equation*}
    x^{-1} \sigma(x \bbU_{r:r+}^{A_1} x^{-1}) x, x^{-1} \sigma^2(x \bbU_{r:r+}^{A_2} x^{-1}) x, \ldots, x^{-1} \sigma^{m-1}(x \bbU_{r:r+}^{A_{m-1}} x^{-1})x
  \end{equation*}
  under $\varphi_{\Phi^-}$ is equal to $\bbU_{r:r+}^{\Phi^-}$. But now this means that the span of these subspaces with $\bbB_{r:r+}$ is equal to $\bbG_{r:r+}$.
\end{proof}

All statements above also work with the depth $r$ with any $s \in \bbR$ with $r-1 \leq s \leq r$. It therefore follows from Proposition \ref{prop:|A| elliptic} that if $\bfT \subset \bfG$ is elliptic, the conclusion of Theorem \ref{thm:fiber cohomology} holds. This then completes the proof of Theorem \ref{thm:level lower}. We will now use this to prove Theorem \ref{thm:fiber cohomology} in the general case. 

Let $\bfL,\bfQ$ be as in Lemma \ref{lem:pInd factor} so that $R_{\bbT_r,\bbB_r}^{\bbG_r} = \Ind_{\bbQ_{0:r+}^\sigma}^{\bbG_{0:r+}^\sigma} \circ \Inf_{\bbL_{0:r+}^\sigma}^{\bbQ_{0:r+}^\sigma} \circ R_{\bbT_{0:r+},\bbL_{0:r+} \cap \bbB_{0:r+}}^{\bbL_{0:r+}}$. Then for any $\theta$ of depth $<r$, we have 
\begin{align*}
  &\Inf_{\bbG_{0:(r-1)+}^\sigma}^{\bbG_{0:r+}^\sigma} R_{\bbT_{0:(r-1)+},\bbB_{0:(r-1)+}}^{\bbG_{0:(r-1)+}}(\theta) \\
  &= \Ind_{\bbQ_{0:(r-1)+}^\sigma}^{\bbG_{0:(r-1)+}^\sigma}(\Inf_{\bbL_{0:(r-1)+}^\sigma}^{\bbQ_{0:(r-1)+}^\sigma}(R_{\bbT_{0:(r-1)+},\bbL_{0:(r-1)+} \cap \bbB_{0:(r-1)+}}^{\bbL_{0:(r-1)+}}(\theta))) \\
  &= \Ind_{\bbQ_{0:r+}^\sigma \bbG_{({r-1})+:r+}^\sigma}^{\bbG_{0:r+}^\sigma}(\Inf_{\bbL_{0:(r-1)+}^\sigma}^{\bbQ_{0:r+}^\sigma \bbG_{({r-1})+:r+}^\sigma}(R_{\bbT_{0:(r-1)+},\bbL_{0:(r-1)+} \cap \bbB_{0:(r-1)+}}^{\bbL_{0:(r-1)+}}(\theta))) \\
  &= \Ind_{\bbQ_{0:r+}^\sigma \bbG_{({r-1})+:r+}^\sigma}^{\bbG_{0:r+}^\sigma}(\Inf_{\bbL_{0:r+}^\sigma}^{\bbQ_{0:r+}^\sigma \bbG_{({r-1})+:r+}^\sigma}(R_{\bbT_{0:r+},\bbL_{0:r+} \cap \bbB_{0:r+}}^{\bbL_{0:r+}}(\theta))),
\end{align*}
where the last equality holds by Theorem \ref{thm:level lower}. This shows that
\begin{equation*}
  \dim R_{\bbT_r,\bbB_r}^{\bbG_r}(\theta) = |\bbU_{(r-1)+:r+}^\sigma| \cdot \dim R_{\bbT_{r-1},\bbT_{r-1}}^{\bbG_{r-1}}(\theta).
\end{equation*} 
On the other hand, \eqref{eq:fiber cohomology} implies that
\begin{equation}
  \dim H_c^i(X_{\bbT_r,\bbB_r}^{\bbG_r}, \overline \QQ_\ell)^{\bbT_{r:r+}^\sigma} = |A| \cdot \dim H_c^{i-2N}(X_{\bbT_{r-1},\bbB_{r-1}}^{\bbG_{r-1}}, \overline \QQ_\ell), \qquad \text{for all $i \geq 0$}.
\end{equation}
Therefore $|A| = |\bbU_{(r-1)+:r+}^\sigma|$, which now completes the proof of Theorem \ref{thm:fiber cohomology}.

\section{The scalar product formula for parahoric Deligne--Lusztig induction}\label{sec:scalar}


\begin{definition}
  Let $\theta \from \bbT_r^\sigma \to \overline \QQ_\ell^\times$ be any character. Then the restriction $\theta|_{\bbT_{r:r+}^\sigma}$ agrees with the restriction of a weakly $(\bfM,\bfG)$-generic character for some Levi subgroup $\bfM$ of $\bfG$. 
  We say that  $(\theta,\bbT_r)$ is \textit{split-generic} if $\bfT$ is elliptic over $F$ as a torus of $\bfM$. 
\end{definition}

We now come to the main theorem of the paper.

\begin{theorem}\label{thm:scalar product formula}
  Let $(\theta,\bbT_r,\bbB_r)$ be split-generic and Howe-factorizable. For any $(\theta',\bbT_r',\bbB_r')$,
  \begin{equation*}
    \langle R_{\bbT_r,\bbB_r}^{\bbG_r}(\theta), R_{\bbT_r',\bbB_r'}^{\bbG_r}(\theta') \rangle_{\bbG_r^\sigma} = \sum_{w \in W_{\bbG_r}(\bbT_r,\bbT_r')^\sigma} \langle \theta, \ad(w^{-1})^* \theta' \rangle_{\bbT_r^\sigma}.
  \end{equation*}
  In particular, $R_{\bbT_r,\bbB_r}^{\bbG_r}(\theta)$ is independent of the choice of $\bfB$.
\end{theorem}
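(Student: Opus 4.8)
The plan is to reduce the scalar product formula to its classical $r=0$ case \cite{DL76} by using the generic Mackey formula (Corollary~\ref{cor:generic mackey}) to strip a Howe factorization of $\theta$ layer by layer, the split-genericity hypothesis being exactly what guarantees that the depth-lowering isomorphisms of Corollary~\ref{cor:depth compatibility} (ultimately Theorem~\ref{thm:fiber cohomology}) are available at every stage. First I would fix a Howe factorization $\vec\phi=(\phi_{-1},\dots,\phi_d)$ of $(\theta,\bfT)$ and a compatible nested chain $\vec\bfP$ of parabolics as in Definition~\ref{def:rTG}, with associated Levi chain $\bfT=\bfG^{-1}\subseteq\bfG^{0}\subsetneq\cdots\subsetneq\bfG^{d}=\bfG$; by Proposition~\ref{prop:Howe} (whose proof invokes Corollary~\ref{cor:depth compatibility}) split-genericity gives $R_{\bbT_r,\bbB_r}^{\bbG_r}(\theta)\cong r_{\bbT_r}^{\bbG_r}(\vec\phi;\vec\bfP)$, so it is enough to compute $\langle r_{\bbT_r}^{\bbG_r}(\vec\phi;\vec\bfP),R_{\bbT_r',\bbB_r'}^{\bbG_r}(\theta')\rangle_{\bbG_r^\sigma}$. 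One structural observation makes the induction possible: since each $\phi_i$ is trivial on $G^i_{\sc}(F)$, the restriction of $\phi_{d-1}$ to the relevant Moy--Prasad quotient of $\bfT$ is as degenerate as possible \emph{inside} $\bfG^{d-1}$, and combined with split-genericity this forces $\bfT$ to be elliptic in $\bfG^{d-1}$, hence in every $\bfG^{i}$ with $i\le d-1$ and likewise for all conjugates ${}^w\bfT\subset{}^w\bfG^{i}$ that arise; in particular the truncated datum $\bigl(\prod_{i\le j}\phi_i|_{T(F)},\bfT\bigr)$ is again split-generic inside $\bfG^{j}$ for each $j$.

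For the inductive step I would unwind the top layer of Definition~\ref{def:rTG},
\begin{equation*}
  r_{\bbT_r}^{\bbG_r}(\vec\phi;\vec\bfP)
  =\Inf\Bigl(R_{\bbG^{d-1}_{r_{d-1}},\bbP^{d-1}_{r_{d-1}}}^{\bbG_{r_{d-1}}}\bigl(\rho^{d-1}\bigr)\Bigr)\otimes\phi_d,
  \qquad \rho^{d-1}\colonequals r_{\bbT_{r_{d-1}}}^{\bbG^{d-1}_{r_{d-1}}}(\phi_{-1},\dots,\phi_{d-1};\vec\bfP),
\end{equation*}
where $\Inf$ is the inflation from depth $r_{d-1}$ to depth $r$, and note that $\rho^{d-1}$ is $(\bfG^{d-1},\bfG)$-generic, its restriction to the top Moy--Prasad layer of $(\bbG^{d-1}_{r_{d-1}})^{\sigma}$ being governed by the $(\bfG^{d-1},\bfG)$-generic character $\phi_{d-1}$. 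Three moves then carry the day: (i) strip $\phi_d$ by Proposition~\ref{prop:twisting}, absorbing it as a twist of $\theta'$ compatible with $\theta=\bigl(\prod_{i\le d-1}\phi_i|_{T(F)}\bigr)\cdot\phi_d|_{T(F)}$; (ii) strip the inflation using the adjunction between $\Inf$ and taking $(\bbG_{r_{d-1}+:r+})^{\sigma}$-invariants, where Lemma~\ref{lem:invariants} identifies the invariants of $R_{\bbT_r',\bbB_r'}^{\bbG_r}$ of a character with the $R$ of its restriction to depth $r_{d-1}$ (and with $0$ if the character does not descend to that depth); (iii) apply Corollary~\ref{cor:generic mackey} with $\bfM=\bfG^{d-1}$ and $\rho=\rho^{d-1}$, which turns the inner product into a sum over $w\in\bbT'_{r_{d-1}}\backslash\cS(\bbT'_{r_{d-1}},\bbG^{d-1}_{r_{d-1}})^{\sigma}/\bbG^{d-1}_{r_{d-1}}$ of terms $\langle R_{\bbT'_{r_{d-1}},\,\bbB'_{r_{d-1}}\cap{}^w\bbG^{d-1}_{r_{d-1}}}^{{}^w\bbG^{d-1}_{r_{d-1}}}(\theta'),\ \ad(w^{-1})^{*}\rho^{d-1}\rangle$. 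Because $\ad(w^{-1})^{*}\rho^{d-1}$ is, by construction, $r_{{}^w\bbT_{r_{d-1}}}^{{}^w\bbG^{d-1}_{r_{d-1}}}$ of the $w$-conjugated Howe datum of length $d-1$ inside ${}^w\bfG^{d-1}$ (in which ${}^w\bfT$ is again elliptic), Proposition~\ref{prop:Howe} and the induction hypothesis evaluate each summand as a sum of terms $\langle{}^w\theta,\ad(v^{-1})^{*}\theta'\rangle$. The base case $d=0$ --- where $\bfG^{0}=\bfG$, $\bfT$ is elliptic in $\bfG$, and $\phi_0$ is a character of $\bfG(F)$ --- is identical except that after stripping $\phi_0$ and the inflation one is left with $\langle R_{\bbT_{r_{-1}},\bbB_{r_{-1}}}^{\bbG_{r_{-1}}}(\phi_{-1}),R_{\bbT'_{r_{-1}},\bbB'_{r_{-1}}}^{\bbG_{r_{-1}}}(\theta'_{(0)})\rangle$; since $\phi_{-1}$ has depth $0$, Corollary~\ref{cor:depth compatibility} together with Lemma~\ref{lem:invariants} reduces this to depth $0$, where \cite{DL76} finishes.

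The two delicate points are the bookkeeping of depths and the combinatorics. For the former: whenever the twisted character of $\bbT'_r$ produced at a given stage fails to descend to the relevant lower depth, Lemma~\ref{lem:invariants} makes the corresponding term vanish, and one has to check that $W_{\bbG_r}(\bbT_r,\bbT'_r)^{\sigma}$ contributes nothing in precisely those cases --- equivalently, that a $\sigma$-fixed transporter $\bbT_r\to\bbT'_r$ can exist only when the depth profiles of $\theta$ and $\theta'$ along the relevant coroot directions are compatible; this is straightforward once one observes that each $\phi_i$, being a character of $\bfG^i(F)$, is conjugation invariant, so the twists cancel out of the surviving inner products and one is really comparing $\theta$ with the $W$-conjugates of $\theta'$. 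The genuine obstacle I anticipate is the combinatorial reassembly: the iterated sum produced by the $d$ successive applications of Corollary~\ref{cor:generic mackey} over transporter double cosets, followed by the Weyl-group sum from the $r=0$ formula, must collapse without multiplicity to the single sum over $W_{\bbG_r}(\bbT_r,\bbT'_r)^{\sigma}$, with the $\phi_i$-twists matching up correctly along the way. This amounts to a transitivity statement for the transporters $\cS(\bbT_r,-)$ along the Levi chain $\bfG^0\subseteq\cdots\subseteq\bfG^d$ together with the composition law for relative Weyl groups; using the identification of these transporters with their $r=0$ analogues recorded just before Theorem~\ref{thm:generic mackey}, it can be checked on the reductive quotient $\bbG_0$, where it is classical.

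Finally, the independence of $\bfB$ is immediate from the formula itself: its right-hand side does not involve $\bfB$, so applying it with $(\theta',\bbT'_r)=(\theta,\bbT_r)$ and two choices $\bfB^{(1)},\bfB^{(2)}$ of Borel yields $\langle R_{\bbT_r,\bbB_r^{(1)}}^{\bbG_r}(\theta),R_{\bbT_r,\bbB_r^{(1)}}^{\bbG_r}(\theta)\rangle=\langle R_{\bbT_r,\bbB_r^{(1)}}^{\bbG_r}(\theta),R_{\bbT_r,\bbB_r^{(2)}}^{\bbG_r}(\theta)\rangle=\langle R_{\bbT_r,\bbB_r^{(2)}}^{\bbG_r}(\theta),R_{\bbT_r,\bbB_r^{(2)}}^{\bbG_r}(\theta)\rangle$, hence $\langle R_{\bbT_r,\bbB_r^{(1)}}^{\bbG_r}(\theta)-R_{\bbT_r,\bbB_r^{(2)}}^{\bbG_r}(\theta),\,R_{\bbT_r,\bbB_r^{(1)}}^{\bbG_r}(\theta)-R_{\bbT_r,\bbB_r^{(2)}}^{\bbG_r}(\theta)\rangle=0$, and positivity of $\langle-,-\rangle$ on $\cR(\bbG_r^\sigma)_{\bbR}$ forces $R_{\bbT_r,\bbB_r^{(1)}}^{\bbG_r}(\theta)=R_{\bbT_r,\bbB_r^{(2)}}^{\bbG_r}(\theta)$.
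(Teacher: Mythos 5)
Your proposal is correct and follows essentially the same route as the paper: replace $R_{\bbT_r,\bbB_r}^{\bbG_r}(\theta)$ by $r_{\bbT_r}^{\bbG_r}(\vec\phi;\vec\bfP)$ via Proposition \ref{prop:Howe} (resting on the elliptic depth-lowering of Theorem \ref{thm:level lower}), then induct on the Howe factorization length, peeling each layer with the twisting property (Proposition \ref{prop:twisting}), the invariants lemma (Lemma \ref{lem:invariants}), and the generic Mackey formula (Corollary \ref{cor:generic mackey}), and finally deduce independence of $\bfB$ by the standard positivity trick, exactly as in Proposition \ref{prop:half scalar product} and the surrounding discussion. The only cosmetic deviation is your base case, which descends all the way to depth $0$ and invokes \cite{DL76}, whereas the paper stops at the depth of the bottom layer and cites the generic scalar product formula of \cite{CI21-RT}; both are fine.
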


Let us state an immediate corollary in the setting that $\bfT \subset \bfG$ is elliptic over $F$, the notation here being as in Corollary \ref{cor:depth compatibility}.

\begin{corollary}
  Let $\bfT \subset \bfG$ be elliptic over $F$ and assume $p$ is not a torsion prime for the root system of $\bfG$.
  \begin{enumerate}
    \item The functor $R_{\bbT_\infty}^{\bbG_\infty} \colonequals R_{\bbT_\infty,\bbB_\infty}^{\bbG_\infty}$ is independent of the choice of $\bfB$.
    \item $R_{\bbT_\infty}^{\bbG_\infty}(\theta)$ is irreducible if and only if $\Stab_{W_{\bbG_\infty}(\bbT_\infty)^\sigma}(\theta) = \{1\}.$
  \end{enumerate}
\end{corollary}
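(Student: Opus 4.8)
The plan is to deduce both parts from the scalar product formula (Theorem~\ref{thm:scalar product formula}) together with the depth-compatibility results of Section~\ref{sec:depth compatibility}, after two routine reductions. First: when $\bfT$ is elliptic over $F$, \emph{every} character $\theta$ of $\bbT_r^\sigma$ is split-generic. Indeed, for the Levi $\bfM \supseteq \bfT$ attached to $\theta|_{\bbT_{r:r+}^\sigma}$ one has $Z(\bfG) \subseteq Z(\bfM)$, so $\bfT/Z(\bfM)$ is a quotient of the anisotropic torus $\bfT/Z(\bfG)$ and hence anisotropic, i.e.\ $\bfT$ is elliptic as a torus of $\bfM$; thus Theorem~\ref{thm:scalar product formula} applies to $(\theta,\bbT_r,\bbB_r)$ with no restriction on $\theta$. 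Second: any smooth character $\theta$ of $\bbT_\infty^\sigma$ factors through $\bbT_r^\sigma$ for some finite $r$, and then $R_{\bbT_\infty,\bbB_\infty}^{\bbG_\infty}(\theta) = R_{\bbT_r,\bbB_r}^{\bbG_r}(\theta)$ by Corollary~\ref{cor:infinite level}, while the Weyl groups $W_{\bbG_s}(\bbT_s)$ for $0 \le s \le \infty$ — together with their $\sigma$-actions and their twisting actions on characters of the $\bbT_s^\sigma$ — are identified compatibly under the reduction maps, so $\Stab_{W_{\bbG_\infty}(\bbT_\infty)^\sigma}(\theta) = \Stab_{W_{\bbG_r}(\bbT_r)^\sigma}(\theta)$. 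Hence it suffices to prove both assertions for $R_{\bbT_r,\bbB_r}^{\bbG_r}$ at any finite $r$ through which $\theta$ factors.

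Part~(1) is then immediate: a choice of $\bfB_\infty$ is the same datum as a choice of Borel $\bfB \supseteq \bfT$ of $\bfG$, and the final sentence of Theorem~\ref{thm:scalar product formula} asserts precisely that $R_{\bbT_r,\bbB_r}^{\bbG_r}(\theta)$ is independent of $\bfB$.

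For Part~(2) I would apply Theorem~\ref{thm:scalar product formula} with $(\theta',\bbT_r',\bbB_r')=(\theta,\bbT_r,\bbB_r)$ (the Borel being irrelevant by Part~(1)). Since $\langle\theta,\ad(w^{-1})^*\theta\rangle_{\bbT_r^\sigma}$ is $1$ when $\ad(w^{-1})^*\theta=\theta$ and $0$ otherwise, the formula reads
\[
  \bigl\langle R_{\bbT_r,\bbB_r}^{\bbG_r}(\theta),\, R_{\bbT_r,\bbB_r}^{\bbG_r}(\theta)\bigr\rangle_{\bbG_r^\sigma} \;=\; \#\Stab_{W_{\bbG_r}(\bbT_r)^\sigma}(\theta).
\]
Writing $R_{\bbT_r,\bbB_r}^{\bbG_r}(\theta)=\sum_\rho n_\rho\rho$ over the irreducible $\bbG_r^\sigma$-representations, the left side equals $\sum_\rho n_\rho^2$, which is $1$ exactly when a single $n_\rho$ is $\pm 1$ and all others vanish; that is, exactly when $R_{\bbT_r,\bbB_r}^{\bbG_r}(\theta)$ is $\pm$ an irreducible representation. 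As in the classical Deligne--Lusztig setting, this is the sense in which irreducibility of the virtual character $R_{\bbT_r,\bbB_r}^{\bbG_r}(\theta)$ is to be understood, and it holds if and only if $\Stab_{W_{\bbG_r}(\bbT_r)^\sigma}(\theta)=\{1\}$. This gives Part~(2).

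Given Theorem~\ref{thm:scalar product formula}, little is left to do, and the closest thing to an obstacle lies entirely in the first paragraph: checking that ellipticity of $\bfT$ over $F$ really forces split-genericity for \emph{every} $\theta$ (so that the unconditional form of Theorem~\ref{thm:scalar product formula} is available), and that the finite-depth stabilizers and Weyl-group data match the infinite-depth ones compatibly with the $\sigma$-action and with the twisting action on characters, which is built into the constructions of Section~\ref{sec:depth compatibility} and Corollary~\ref{cor:infinite level}. If one wants to assert genuine rather than $\pm$ irreducibility, it remains to determine the sign $\varepsilon_{\bbG_r}\varepsilon_{\bbT_r}$ of $R_{\bbT_r,\bbB_r}^{\bbG_r}(\theta)$ exactly as in \cite{Lus04,Sta09,CI21-RT}; this sign is not always $+1$ even for elliptic $\bfT$ (already for $\bfG=\SL_2$ with a nonsplit torus), which is why irreducibility is stated in the virtual sense above.
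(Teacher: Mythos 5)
Your argument is correct and is exactly the paper's intended derivation: the corollary is stated there as an immediate consequence of Theorem \ref{thm:scalar product formula} together with Corollary \ref{cor:infinite level}, using that ellipticity of $\bfT$ over $F$ makes every $\theta$ split-generic. Your caveat that irreducibility must be read in the virtual (up to sign) sense, with the sign determined as in \cite{Lus04,Sta09,CI21-RT}, is the standard and correct reading of the statement.
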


\subsection{Proof of the scalar product formula}

We first note the following proposition, which comes as an easy corollary of several results we have established in this paper.

\begin{proposition}\label{prop:Howe}
  Let $(\theta,\bbT_r,\bbB_r)$ be split-generic and let $\vec \phi$ be any Howe factorization and choose any accompanying sequence $\vec \bfP$ of parabolic subgroups. Then
  \begin{equation*}
    R_{\bbT_r,\bbB_r}^{\bbG_r}(\theta) = r_{\bbT_r}^{\bbG_r}(\vec \phi; \vec \bfP).
  \end{equation*}
\end{proposition}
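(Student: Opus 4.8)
The statement asserts that $R_{\bbT_r,\bbB_r}^{\bbG_r}(\theta)$ coincides with the virtual representation $r_{\bbT_r}^{\bbG_r}(\vec\phi;\vec\bfP)$ built up from a Howe factorization. The plan is to induct on the Howe factorization length $d$, peeling off one layer $\phi_i$ at a time and repeatedly invoking transitivity of parahoric Lusztig induction (Proposition \ref{prop:transitivity}), the twisting property (Proposition \ref{prop:twisting}), and the depth-compatibility isomorphism (Corollary \ref{cor:depth compatibility}). First I would unwind the definition of $r_{\bbT_r}^{\bbG_r}(\vec\phi;\vec\bfP)$: it is obtained from $r_{\bbT_{r_{d-1}}}^{\bbG_{r_{d-1}}^{d-1}}(\phi_{-1},\dots,\phi_{d-1};\vec\bfP)$ by inflating from depth $r_{d-1}$ to depth $r_d$, applying $R_{\bbG_{r_{d-1}}^{d-1},\bbP_{r_{d-1}}^{d-1}}^{\bbG_{r_{d-1}}}$, and tensoring with $\phi_d$.

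\textbf{Base case and the split-generic reduction.} The base of the induction is $\bfG^0$: here $R_{\bbT_{r_0},\bbB_{r_0}}^{\bbG_{r_0}^0}(\phi_{-1}|_{\bbT_{r_0}^\sigma})$ must be matched against $r_{\bbT_{r_0}}^{\bbG_{r_0}^0}(\phi_{-1},\phi_0;\vec\bfP)$; since $\bfG^0$ is the smallest Levi in which $\bfT$ is not central (or $\bfG^0=\bfT$), this is essentially a tautology after unwinding, using that $\phi_0$ restricted to $\bbT_{r_0:r_0+}^\sigma$ is $(\bfG^0,\bfG^1)$-generic and $\phi_{-1}$ has depth-$0$ behavior prescribed by Definition \ref{def:Howe}(3). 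The key structural input is that split-genericity of $(\theta,\bbT_r)$ means precisely that $\bfT$ is elliptic in the Levi $\bfM = \bfG^0$ attached to the generic part of $\theta|_{\bbT_{r:r+}^\sigma}$; this is what licenses the use of Corollary \ref{cor:depth compatibility} inside $\bfG^0$ to replace $R_{\bbT_r,\bbB_r\cap\bfG^0}^{\bbG_r^0}$ by $R_{\bbT_s,\bbB_s\cap\bfG^0}^{\bbG_s^0}$ for the appropriate lower depth $s$, since ellipticity of $\bfT$ in $\bfG^0$ is exactly the hypothesis of that corollary.

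\textbf{Inductive step.} Assuming the claim for the truncation $(\phi_{-1},\dots,\phi_{i-1})$ living over $\bfG^{i-1}$, I would write $\theta_i \colonequals \prod_{j=-1}^{i}\phi_j|_{\bbT_{r_i}^\sigma}$ and aim to show $R_{\bbT_{r_i},\bbB_{r_i}}^{\bbG_{r_i}^i}(\theta_i) = r_{\bbT_{r_i}}^{\bbG_{r_i}^i}(\phi_{-1},\dots,\phi_i;\vec\bfP)$. Using transitivity (Proposition \ref{prop:transitivity}) to factor $R_{\bbT_{r_i},\bbB_{r_i}}^{\bbG_{r_i}^i} = R_{\bbG_{r_i}^{i-1},\bbP_{r_i}^{i-1}}^{\bbG_{r_i}^i}\circ R_{\bbT_{r_i},\bbB_{r_i}\cap\bbG_{r_i}^{i-1}}^{\bbG_{r_i}^{i-1}}$, I would first handle the inner factor by peeling off the top layer character $\phi_i$ via Proposition \ref{prop:twisting}, writing $\theta_i = \theta_{i-1}'\cdot\phi_i|_{\bbT}$ where $\theta_{i-1}'$ is $\theta_i$ after removing the $\phi_i$-contribution; since $\phi_i$ extends to a character $\tilde\phi_i$ of $\bbG_{r_i}^{i,\sigma}$ by Definition \ref{def:Howe}(2) (triviality on the simply connected cover), Proposition \ref{prop:twisting} pulls $\tilde\phi_i^{-1}$ outside of the whole induction. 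Then the depth of $\theta_{i-1}'$ is at most $r_{i-1}$, so Corollary \ref{cor:depth compatibility} (applicable because $\bfT$ is elliptic in $\bfG^{i-1}$—this propagates from the split-generic assumption and the fact that each $\bfG^j$ contains $\bfG^0$) identifies $R_{\bbT_{r_i},\bbB_{r_i}\cap\bbG_{r_i}^{i-1}}^{\bbG_{r_i}^{i-1}}(\theta_{i-1}')$ with the inflation of $R_{\bbT_{r_{i-1}},\bbB_{r_{i-1}}\cap\bbG_{r_{i-1}}^{i-1}}^{\bbG_{r_{i-1}}^{i-1}}(\theta_{i-1}')$, to which the inductive hypothesis applies. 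Reassembling with the outer $R_{\bbG_{r_i}^{i-1},\bbP_{r_i}^{i-1}}^{\bbG_{r_i}^i}$ and the inflation/tensor operations reproduces exactly Definition \ref{def:rTG}, completing the step.

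\textbf{Main obstacle.} The delicate point is the bookkeeping of depths and inflations: one must check that the inflation maps $\Inf$ appearing in Definition \ref{def:rTG} commute appropriately with $R_{\bbM_r,\bbP_r}^{\bbG_r}$ (which is Lemma \ref{lem:invariants}, read contravariantly) and with the twist by $\tilde\phi_i$, and that the ellipticity of $\bfT$ inside each intermediate $\bfG^j$ genuinely follows from split-genericity rather than needing to be assumed separately. Concretely I expect the subtlety to be verifying that $\bfT$ elliptic in $\bfG^0$ forces $\bfT$ elliptic in $\bfG^0$ viewed inside any larger $\bfG^j$ in the sense required by Corollary \ref{cor:depth compatibility}—i.e.\ that the relevant $X$- and $Y$-lattice computations are insensitive to ambient group—and ensuring the genericity hypotheses of Proposition \ref{prop:twisting} (that $\phi_i$ really is the restriction of a character of the full $\bbG_{r_i}^{i,\sigma}$) are met at each stage, which is where Definition \ref{def:Howe}(2) and the remarks on GE2 in Section \ref{subsec:generic characters} are used.
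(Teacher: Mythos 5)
Your skeleton---peeling off the layers of a Howe factorization by alternating Proposition \ref{prop:transitivity}, Proposition \ref{prop:twisting}, and depth compatibility (Theorem \ref{thm:level lower}/Corollary \ref{cor:depth compatibility})---is the paper's argument, just run top-down instead of bottom-up. But the point you flag as the main subtlety is exactly where the proposal has a genuine gap, and your proposed resolution is backwards. Split-genericity does \emph{not} say that $\bfT$ is elliptic in $\bfG^0$: the Levi $\bfM$ in the definition is attached to the top layer $\theta|_{\bbT_{r:r+}^\sigma}$, so in Howe-factorization terms $\bfM \supseteq \bfG^{d-1}$ (one has $\bfM=\bfG^{d-1}$ when $\phi_d=\triv$, i.e.\ $r_d=r_{d-1}$, and $\bfM=\bfG$ when $\phi_d\neq\triv$); it is the \emph{top} of the twisted Levi sequence, not the bottom. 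Moreover ellipticity propagates downward, not upward: if $\bfT\subset\bfL\subset\bfM$ and $\bfT$ is elliptic in $\bfM$, then $Z(\bfM)^\circ\subseteq Z(\bfL)^\circ\subseteq\bfT$ forces $\bfT$ elliptic in $\bfL$, whereas ellipticity in $\bfG^0$ gives nothing for the larger $\bfG^{j}$. With your reading the inductive step fails: take $\bfG=\GL_2$, $\bfT$ split, $\theta=\phi_0\cdot\phi_1|_{T(F)}$ with $\phi_0$ a $(\bfT,\bfG)$-generic character of depth $r_0$ and $\phi_1$ a character of $\bfG(F)$ of depth $r_1>r_0$. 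Then $\bfT$ is (trivially) elliptic in $\bfG^0=\bfT$, yet the application of Corollary \ref{cor:depth compatibility} inside $\bbG^1_{r}=\bbG_{r}$ is false (here $\bbU_{r:r+}^\sigma\neq\{1\}$, so by Theorem \ref{thm:fiber cohomology} the fibers contribute more than the $\bbT_{r:r+}^\sigma$-invariants), and indeed such a $\theta$ is not split-generic. The correct logic is the reverse of yours: split-genericity gives $\bfT$ elliptic in $\bfM$; a depth-raising is only ever needed inside a $\bfG^{i}$ with $r_i>r_{i-1}$; every such $\bfG^i$ is contained in $\bfM$; and ellipticity then \emph{descends} from $\bfM$ to each of these $\bfG^i$. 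That is what licenses the paper's ``we may apply Theorem \ref{thm:level lower} at each intermediate step.''

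A secondary bookkeeping issue: because you decompose from the top, at stage $i$ you end up with the outer induction $R_{\bbG^{i-1}_{r_i},\bbP^{i-1}_{r_i}}^{\bbG^{i}_{r_i}}$ applied to an inflation from depth $r_{i-1}$, whereas Definition \ref{def:rTG} performs the outer induction at depth $r_{i-1}$ and inflates afterwards. Matching the two requires commuting inflation past a \emph{Levi} induction, i.e.\ a Levi analogue of Theorem \ref{thm:level lower}; Lemma \ref{lem:invariants} ``read contravariantly'' only controls $\bbG_{s+:r+}^\sigma$-invariants and does not give this identity. The paper's ordering avoids the issue: apply Proposition \ref{prop:transitivity} at the lower depth $r_{i-1}$ to collapse $R_{\bbG^{i-1},\bbP^{i-1}}\circ R_{\bbT,\bbB\cap\bbG^{i-1}}$ into a single torus induction $R_{\bbT,\bbB}^{\bbG^{i}_{r_{i-1}}}$, then raise its depth by Theorem \ref{thm:level lower} inside $\bbG^i$, then absorb $\phi_i$ by Proposition \ref{prop:twisting}. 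I would restructure your induction in that order and replace the ``elliptic in $\bfG^0$'' justification by the descent-from-$\bfM$ argument above.
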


\begin{proof}
  Since $(\theta,\bbT_r,\bbB_r)$ is split-generic by assumption, we may apply Theorem \ref{thm:level lower} at each intermediate step. Hence we have
  \begin{align*}
    R_{\bbG_{r_1}^{1}, \bbP_{r_1}^{1}}^{\bbG_{r_1}^{2}}(R_{\bbT_{r_0}', \bbB_{r_0}'}^{\bbG_{r_0}^{1}}(\theta_0') \otimes \theta_1') 
    &= R_{\bbG_{r_1}^{1}, \bbP_{r_1}^{1}}^{\bbG_{r_1}^{2}}(R_{\bbT_{r_1}', \bbB_{r_1}'}^{\bbG_{r_1}^{1}}(\theta_0') \otimes \theta_1')\\
    &= R_{\bbG_{r_1}^{1}, \bbP_{r_1}^{1}}^{\bbG_{r_1}^{2}}(R_{\bbT_{r_1}', \bbB_{r_1}'}^{\bbG_{r_1}^{1}}(\theta_0' \otimes \theta_1')) \\
    &= R_{\bbT_{r_1}', \bbB_{r_1}'}^{\bbG_{r_1}^{2}}(\theta_0' \otimes \theta_1')
  \end{align*}
  where the first equality holds by Theorem \ref{thm:level lower}, the second equality holds by Proposition \ref{prop:twisting}, and the third equality holds by Proposition \ref{prop:transitivity}. Continuing this, we see the desired equality.
\end{proof}

With Proposition \ref{prop:Howe} in mind, Theorem \ref{thm:scalar product formula} follows from calculating the inner product $\langle R_{\bbT_r,\bbB_r}^{\bbG_r}(\theta), r_{\bbT_r'}^{\bbG_r}(\vec \phi'; \vec \bfP') \rangle,$ which we do in Proposition \ref{prop:half scalar product} below. The final assertion of Theorem \ref{thm:scalar product formula} about independence of the choice of $\bfB$ follows from the scalar product formula using the same trick as in \cite[Corollary 2.4]{Lus04}: the inner product of $R_{\bbT_r,\bbB_r}^{\bbG_r}(\theta) - R_{\bbT_r,\bbB_r'}^{\bbG_r}(\theta)$ with itself is equal to zero.

\begin{proposition}\label{prop:half scalar product}
  Let $\vec \phi'$ be any Howe factorization of $\theta'$ and choose any accompanying sequence $\vec \bfP$ of parabolic subgroups. Then 
  \begin{equation*}
    \langle R_{\bbT_r,\bbB_r}^{\bbG_r}(\theta), r_{\bbT_r,\bbB_r}^{\bbG_r}(\vec \phi'; \vec \bfP') \rangle_{\bbG_r^\sigma} = \sum_{w \in W_{\bbG_r}(\bbT_r,\bbT_r')^\sigma} \langle \theta, \ad(w) \theta' \rangle_{\bbT_r^\sigma}.
  \end{equation*}
\end{proposition}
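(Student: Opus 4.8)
The plan is to prove this by induction on the Howe factorization length $d$ of $(\theta',\bfT')$, peeling off layers one at a time using the generic Mackey formula. Recall that $r_{\bbT_r,\bbB_r}^{\bbG_r}(\vec\phi';\vec\bfP')$ is built by starting from $r_{\bbT_{r_{d-1}}}^{\bbG_{r_{d-1}}^{d-1}}(\phi_{-1}',\ldots,\phi_{d-1}';\vec\bfP')$, applying parahoric Lusztig induction $R_{\bbG_{r_{d-1}}^{d-1},\bbP_{r_{d-1}}^{d-1}}^{\bbG_{r_{d-1}}}$, inflating to depth $r_d = r$, and tensoring by $\phi_d'$. The base case $d = 0$ (or the case where all intermediate Levis are $\bfT'$ itself) reduces to the classical Deligne--Lusztig situation plus a twist, handled by Proposition \ref{prop:twisting}, Theorem \ref{thm:level lower}, and the $r=0$ scalar product formula of Deligne--Lusztig \cite{DL76}.

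For the inductive step, I would first use adjunction together with Proposition \ref{prop:twisting} (to move the outermost $\phi_d'$ from the $\bbG_r$-side onto $R_{\bbT_r,\bbB_r}^{\bbG_r}(\theta)$, turning the latter into $R_{\bbT_r,\bbB_r}^{\bbG_r}(\theta\cdot(\phi_d'|_{\bbT_r^\sigma})^{-1})$, up to taking care of the correct extension and inverse) and Theorem \ref{thm:level lower} / Corollary \ref{cor:depth compatibility} (to reconcile the depth $r$ with the depth $r_{d-1}$ at which the inner induction lives). This brings the inner product into the shape
\begin{equation*}
  \langle R_{\bbT_{r_{d-1}},\bbB_{r_{d-1}}}^{\bbG_{r_{d-1}}}(\theta''),\, R_{\bbG_{r_{d-1}}^{d-1},\bbP_{r_{d-1}}^{d-1}}^{\bbG_{r_{d-1}}}(\rho_{d-1}) \rangle_{\bbG_{r_{d-1}}^\sigma},
\end{equation*}
where $\rho_{d-1} = r_{\bbT_{r_{d-1}}}^{\bbG_{r_{d-1}}^{d-1}}(\phi_{-1}',\ldots,\phi_{d-1}';\vec\bfP')$ and $\theta''$ is a suitable twist of $\theta$. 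The crucial point is that $\rho_{d-1}$, being a twist by the $(\bfG^{d-1},\bfG^d)$-generic character $\phi_{d-1}'$ of something inflated from lower depth, is a $(\bfG^{d-1},\bfG^d)$-generic representation of $\bbG_{r_{d-1}}^{d-1\,\sigma}$ in the sense of the definition preceding Theorem \ref{thm:generic mackey}. Hence Corollary \ref{cor:generic mackey} applies with $\bfM = \bfG^{d-1}$, rewriting the inner product as a sum over $w \in \bbT_{r_{d-1}}\backslash\cS(\bbT_{r_{d-1}},\bbG_{r_{d-1}}^{d-1})^\sigma/\bbG_{r_{d-1}}^{d-1}$ of terms $\langle R_{\bbT_{r_{d-1}},\bbB_{r_{d-1}}\cap{}^w\bbG^{d-1}}^{{}^w\bbG^{d-1}}(\theta''),\, \ad(w^{-1})^*\rho_{d-1}\rangle$, to which the inductive hypothesis (for the Levi ${}^w\bfG^{d-1}$, whose Howe factorization has length $d-1$, and after conjugating everything back by $w$) applies.

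Finally I would reorganize the resulting double sum: the outer sum over $w$ indexing double cosets $\cS(\bbT_r,\bbG_r^{d-1})$, together with the inner Weyl-group sum $W_{{}^w\bbG^{d-1}_r}(\bbT_r,\bbT_r')^\sigma$ coming from the inductive hypothesis, should repackage into the full Weyl set $W_{\bbG_r}(\bbT_r,\bbT_r')^\sigma$, since every element conjugating $\bbT_r$ into $\bbT_r'$ factors (up to $\bbT_r$-conjugacy) as ``move $\bbT_r$ into a conjugate of $\bbG^{d-1}$, then move within that Levi.'' Keeping track of the twists is routine: the $\phi_d'$-twist on the left and the accumulated lower $\phi_i'$-twists combine, by property (1) of a Howe factorization ($\theta' = \prod_i \phi_i'|_{T'(F)}$), back into $\langle\theta,\ad(w^{-1})^*\theta'\rangle_{\bbT_r^\sigma}$. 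The main obstacle I anticipate is \textbf{bookkeeping the indexing sets and the genericity hypothesis through conjugation}: one must check that at each stage the relevant representation really is $(\bfM,\bfG)$-generic for the correct $\bfM$ (so that Corollary \ref{cor:generic mackey} legitimately applies), that the parabolic $\bbB_r\cap{}^w\bbG^{d-1}$ is a Borel of ${}^w\bbG^{d-1}$ compatible with the chosen $\vec\bfP'$, and that the double-coset-times-Weyl-set repackaging is a genuine bijection rather than merely a surjection—this last combinatorial identification is where the split-genericity hypothesis on $(\theta,\bbT_r)$ is genuinely used, via Theorem \ref{thm:level lower} ensuring the depth reductions at each layer are cohomologically harmless.
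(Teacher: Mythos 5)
Your overall architecture is the same as the paper's: induct on the Howe factorization length, absorb the outermost layer $\phi_d'$ by the twisting property (Proposition \ref{prop:twisting}), drop to the depth of the next layer, apply the generic Mackey formula (Corollary \ref{cor:generic mackey}) with $\bfM = \bfG^{d-1}$ (the genericity of the input being carried by $\phi_{d-1}'$, exactly as you say), invoke the inductive hypothesis inside ${}^w\bfG^{d-1}$, and recombine the double cosets with the inner Weyl sets into $W_{\bbG_r}(\bbT_r,\bbT_r')^\sigma$.

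There is, however, one step that as written would fail. You justify the depth reconciliation (from $r$ down to $r_{d-1}$) by Theorem \ref{thm:level lower} / Corollary \ref{cor:depth compatibility}, and you assert at the end that this is where split-genericity of $(\theta,\bbT_r)$ is genuinely used. Both claims are off. Those two results require $\bfT$ to be elliptic over $F$ as a torus of $\bfG$, which is not available here: Proposition \ref{prop:half scalar product} is stated (and must be applied, when proving Theorem \ref{thm:scalar product formula}, with the roles of $\theta$ and $\theta'$ swapped) for an arbitrary pair $(\theta,\bbT_r,\bbB_r)$, and even the split-genericity hypothesis of the Main Theorem only gives ellipticity of $\bfT$ inside the Levi $\bfM$, never inside $\bfG$. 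The correct and ellipticity-free tool is the adjunction between inflation and taking $\bbG_{s+:r+}^\sigma$-invariants combined with the invariants lemma (Lemma \ref{lem:invariants}): pairing against $\Inf_{\bbG_{s}^\sigma}^{\bbG_r^\sigma}(\cdot)$ equals pairing the $\bbG_{s+:r+}^\sigma$-invariants of $R_{\bbT_r,\bbB_r}^{\bbG_r}(\theta\otimes\phi_{d}'^{-1})$ against the inner induction at depth $s=r_{d-1}$, and Lemma \ref{lem:invariants} identifies those invariants with $R_{\bbT_{s},\bbB_{s}}^{\bbG_{s}}$ applied to the invariants of the character (which either factors through $\bbT_s^\sigma$ or contributes zero, consistently with the right-hand side). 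This is exactly what the paper does, both in the base case and in the inductive step; split-genericity is used only in Proposition \ref{prop:Howe}, where Theorem \ref{thm:level lower} is legitimately applied inside the Levis of the Howe factorization of $\theta$ (where $\bfT$ is elliptic), not in Proposition \ref{prop:half scalar product}. With Lemma \ref{lem:invariants} substituted for Theorem \ref{thm:level lower} at the depth-lowering steps (including your base case, where the paper then concludes with Corollary \ref{cor:generic mackey}, i.e.\ the generic scalar product formula of \cite{CI21-RT}, rather than only the $r=0$ formula of \cite{DL76}), your plan matches the paper's proof.
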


\begin{proof}
  We induct on the length $d'$ of $\vec \phi'$. The base case is $d' = 0$. We have
  \begin{align*}
    \langle R_{\bbT_r,\bbB_r}^{\bbG_r}(\theta), r_{\bbT_r'}^{\bbG_r}(\vec \phi'; \bfP') \rangle_{\bbG_r^\sigma} 
    &= \langle R_{\bbT_r,\bbB_r}^{\bbG_r}(\theta), \Inf_{\bbG_{r_0'}^\sigma}^{\bbG_r^\sigma}(R_{\bbT_{r_0'}',\bbB_{r_0'}'}^{\bbG_{r_0}}(\phi_{-1}')) \otimes \phi_0' \rangle_{\bbG_r^\sigma} \\
    &= \langle R_{\bbT_r,\bbB_r}^{\bbG_r}(\theta) \otimes \phi_0'{}^{-1}, \Inf_{\bbG_{r_0'}^\sigma}^{\bbG_r^\sigma}(R_{\bbT_{r_0'}',\bbB_{r_0'}'}^{\bbG_{r_0}}(\phi_{-1}')) \rangle_{\bbG_r^\sigma} \\
    &= \langle R_{\bbT_r,\bbB_r}^{\bbG_r}(\theta \otimes \phi_0'{}^{-1}), \Inf_{\bbG_{r_0'}^\sigma}^{\bbG_r^\sigma}(R_{\bbT_{r_0'}',\bbB_{r_0'}'}^{\bbG_{r_0}}(\phi_{-1}')) \rangle_{\bbG_r^\sigma} \\
    &= \langle R_{\bbT_{r_0},\bbB_{r_0}}^{\bbG_{r_0}}(\theta \otimes \phi_0'{}^{-1}), R_{\bbT_{r_0'}',\bbB_{r_0'}'}^{\bbG_{r_0}}(\phi_{-1}') \rangle_{\bbG_{r_0}^\sigma}.
  \end{align*}
  By construction, $\phi_{-1}$ is weakly $(\bfT,\bfG)$-generic of depth $r_0$, so we may apply the generic Mackey formula (Corollary \ref{cor:generic mackey}, which in this special case is the same as \cite[Theorem 1.1]{CI21-RT}) to obtain
  \begin{align*}
    \langle R_{\bbT_r,\bbB_r}(\theta), r_{\bbT_r'}^{\bbG_r}(\vec \phi'; \vec \bfP') \rangle_{\bbG_r^\sigma} 
    &= \sum_{w \in W_{\bbG_{r_0}}(\bbT_{r_0},\bbT_{r_0}')^\sigma} \langle \theta \otimes \phi_0'{}^{-1}, \ad(w^{-1})^* \phi_{-1}' \rangle_{\bbT_{r_0}^\sigma} \\
    &= \sum_{w \in W_{\bbG_{r}}(\bbT_{r},\bbT_{r}')^\sigma} \langle \theta, \ad(w^{-1})^* \theta' \rangle_{\bbT_{r}^\sigma}
  \end{align*}
  where in the last equality note that since $\phi_0'$ is a character of $\bbG_r^\sigma$, it is obviously invariant under pullback by $\ad(w)$.

  Now assume that the proposition holds for any $\theta'$ with Howe factorization length $d'$; we must show that the proposition holds for $\vec \phi'$ of length $d'+1$. We have
  \begin{align*}
    \langle R_{\bbT_r,\bbB_r}^{\bbG_r}(\theta), r_{\bbT_r,\bbB_r}^{\bbG_r}(\vec \phi'; \vec \bfP') \rangle_{\bbG_r^\sigma} 
    &= \langle R_{\bbT_r,\bbB_r}^{\bbG_r}(\theta), \Inf_{\bbG_{s_{d'}}^\sigma}^{\bbG_{r}^\sigma}(R_{\bbG_{s_{d'}}^{d'},\bbP_{s_{d'}}^{d'}}^{\bbG_{s_{d'}}}(r_{\bbT_{s_{d'}}'}^{\bbG_{s_{d'}}^{\prime(d')}}(\vec \phi_{\leq d'}'; \vec \bfP_{\leq d'}'))) \otimes \phi_{d'+1}' \rangle_{\bbG_r^\sigma} \\
    &= \langle R_{\bbT_r,\bbB_r}^{\bbG_r}(\theta) \otimes \phi_{d'+1}^{\prime-1}, \Inf_{\bbG_{s_{d'}}^\sigma}^{\bbG_{r}^\sigma}(R_{\bbG_{s_{d'}}^{d'},\bbP_{s_{d'}}^{d'}}^{\bbG_{s_{d'}}}(r_{\bbT_{s_{d'}}'}^{\bbG_{s_{d'}}^{d'}}(\vec \phi_{\leq d'}'; \vec \bfP_{\leq d'}')))\rangle_{\bbG_r^\sigma} \\
    &= \langle R_{\bbT_r,\bbB_r}^{\bbG_r}(\theta \otimes \phi_{d'+1}^{\prime-1}), \Inf_{\bbG_{s_{d'}}^\sigma}^{\bbG_{r}^\sigma}(R_{\bbG_{s_{d'}}^{d'},\bbP_{s_{d'}}^{\prime (d')}}^{\bbG_{s_{d'}}}(r_{\bbT_{s_{d'}}}^{\bbG_{s_{d'}}^{d'}}(\vec \phi_{\leq d'}'; \vec \bfP_{\leq d'}')))\rangle_{\bbG_r^\sigma} \\
    &= \langle R_{\bbT_{s_{d'}},\bbB_{s_{d'}}}^{\bbG_{s_{d'}}}(\theta \otimes \phi_{d'+1}^{\prime-1}), R_{\bbG_{s_{d'}}^{d'},\bbP_{s_{d'}}^{\prime(d')}}^{\bbG_{s_{d'}}}(r_{\bbT_{s_{d'}}'}^{\bbG_{s_{d'}}^{d'}}(\vec \phi_{\leq d'}'; \vec \bfP_{\leq d'}'))\rangle_{\bbG_{s_{d'}}^\sigma},
  \end{align*}
  where the third equality holds by the twisting lemma (Proposition \ref{prop:twisting}) and the fourth equality holds by in invariants lemma (Lemma \ref{lem:invariants}). Applying the generic Mackey formula (Corollary \ref{cor:generic mackey}) now gives
  \begin{align*}
    &\langle R_{\bbT_r,\bbB_r}^{\bbG_r}(\theta), r_{\bbT_r'}^{\bbG_r}(\vec \phi'; \vec \bfP') \rangle_{\bbG_r^\sigma} \\
    &= \sum_{w \in \bbT_{s_{d'}} \backslash \cS(\bbT_{s_{d'}},\bbG_{s_{d'}}^{d'})^\sigma/\bbG_{s_{d'}}^{d'}} \langle R_{\bbT_{s_{d'}}, \bbB_{s_{d'}} \cap {}^w \bbG_{s_{d'}}^{d'}}^{\bbG_{s_{d'}}^{d'}}(\theta \otimes \phi_{d'+1}^{\prime-1}), \ad(w^{-1})^* r_{\bbT_{s_{d'}}'}^{\bbG_{s_{d'}}^{d'}}(\vec \phi_{\leq d'}'; \vec \bfP_{\leq d'}') \rangle_{{}^w (\bbG_{s_{d'}}^{d'})^\sigma}.
  \end{align*}
  By the inductive hypothesis, each summand on the right-hand side is equal to
  \begin{align*}
    \sum_{v \in W_{{}^w \bbG_{s_{d'}}^{\prime (d')}}(\bbT_{s_{d'}},\bbT_{s_{d'}}')^\sigma} {}&{} \langle \theta \otimes \phi_{d'+1}^{\prime-1}, \ad(v^{-1})^* \ad(w^{-1})^* \theta' \otimes \phi_{d'+1}^{\prime-1} \rangle_{\bbT_{s_{d'}}^{\prime\sigma}} \\
    &= \sum_{v \in W_{{}^w \bbG_r^{\prime(d')}}(\bbT_r,\bbT_r')^\sigma} \langle \theta, \ad(v^{-1})^* \ad(w^{-1})^* \theta' \rangle_{\bbT_r^{\prime\sigma}}.
  \end{align*}
  The desired formula in the proposition now follows.
\end{proof}

\section{Variations: the Drinfeld stratification}\label{sec:drinfeld}

The methods in this paper can be mildly modified to yield results on the cohomology of the Drinfeld stratification of $X_{\bbT_r,\bbB_r}^{\bbG_r}$.

\begin{definition}[Drinfeld stratification]\label{def:drinfeld}
  The \textit{Drinfeld stratum} of $X_{\bbT_r,\bbB_r}^{\bbG_r}$ associated to a Levi subgroup $\bfL$ of $\bfG$ which contains $\bfT$, is the disjoint union
  \begin{equation*}
    \bigsqcup_{\gamma \in \bbG_r^\sigma/(\bbL_r \bbG_{0+:r+})^\sigma} \gamma \cdot X_{\bbT_,\bbB_r}^{\bbL_r\bbG_r^+}, \qquad \text{where $X_{\bbT_r,\bbB_r}^{\bbL_r\bbG_r^+} \colonequals \{x \in \bbL_r \bbG_{0+:r+} : x^{-1}\sigma(x) \in \sigma(\bbU_r)\}$}.
  \end{equation*}
  It is stable under the natural $(\bbT_r^\sigma \times \bbG_r^\sigma)$-action on $X_{\bbT_r,\bbB_r}^{\bbG_r}$. Denote by 
  \begin{equation*}
    R_{\bbT_r,\bbB_r}^{\bbL_r\bbG_r^+} \from \cR(\bbT_r^\sigma) \to \cR((\bbL_r\bbG_{0+:r+})^\sigma)
  \end{equation*}
  the functor corresponding to $X_{\bbT_r,\bbB_r}^{\bbL_r\bbG_r^+}$ in analogy with Definition \ref{def:induction}.
\end{definition}

\begin{theorem}\label{thm:drinfeld}
  Let $(\theta,\bbT_r,\bbB_r)$ be split-generic and Howe-factorizable. For any $(\theta',\bbT_r',\bbB_r')$,
  \begin{equation*}
    \langle R_{\bbT_r,\bbB_r}^{\bbL_r\bbG_r^+}(\theta), R_{\bbT_r',\bbB_r'}^{\bbL_r\bbG_r^+}(\theta') \rangle_{(\bbL_r\bbG_{0+:r+})^\sigma} = \sum_{w \in W_{\bbL_r\bbG_r^+}(\bbT_r,\bbT_r')^\sigma} \langle \theta, \ad(w)^* \theta' \rangle_{\bbT_r^\sigma}.
  \end{equation*}
\end{theorem}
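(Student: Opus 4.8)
The plan is to replay the arguments of Sections \ref{sec:parahoric lusztig}--\ref{sec:scalar} with the perfect group scheme $\bbG_r$ systematically replaced by the partial-depth group scheme $\bbL_r\bbG_r^+\colonequals\bbL_r\bbG_{0+:r+}$, and, inside each Levi step, with the ambient reductive group replaced by the corresponding Levi-type subgroup of its depth-zero reductive quotient. Concretely, for a Levi subgroup $\bfM$ of $\bfG$ containing $\bfT$ and a parabolic $\bfP$ of $\bfG$ with Levi $\bfM$ one sets $X_{\bbM_r,\bbP_r}^{\bbL_r\bbG_r^+}\colonequals\{x\in\bbL_r\bbG_{0+:r+}:x^{-1}\sigma(x)\in\sigma(\bbN_r)\}$ and defines $R_{\bbM_r,\bbP_r}^{\bbL_r\bbG_r^+}$ and its adjoint by the formulas of Definition \ref{def:induction}; here $x^{-1}\sigma(x)$ lies in $\sigma(\bbN_r)\cap\bbL_r\bbG_{0+:r+}$, a pro-unipotent group of the expected dimension because $\bbG_{0+:r+}\subseteq\bbL_r\bbG_r^+$.

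I would first check that the basic properties of parahoric Lusztig induction survive: Propositions \ref{prop:transitivity} and \ref{prop:twisting} and Lemmas \ref{lem:pInd factor}, \ref{lem:invariants}, \ref{lem:der} all go through with $\bbL_r\bbG_r^+$ in the role of $\bbG_r$, since their proofs use only surjectivity of the Lang map on (pro-)unipotent subgroups, commutativity of the depth-$r$ graded pieces, and the Deligne--Lusztig fixed-point formula---for the twisting lemma, a nonzero Lefschetz number $\tr((g,m);H_c^*(X))$ forces $g$ and $m$ to be conjugate in $\bbL_r\bbG_r^+$, so a global character takes the same value on them. Next I would establish the generic Mackey formula: replaying Sections \ref{subsec:generic mackey}--\ref{subsec:generic mackey proof} gives, for any $(\bfM,\bfG)$-generic representation $\rho$ of $\bbM_r^\sigma$, an identity of the same shape as Theorem \ref{thm:generic mackey} with $\bbG_r$ replaced by $\bbL_r\bbG_r^+$ throughout (the intermediate Levi on the right-hand side becoming the centralizer $(\bbL_r\bbG_r^+)\cap\bbM_r$ of $Z(\bbM_0)^0$). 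The stratification of the auxiliary product variety by the $\bbK_{w,0+:r+}$-coordinate and the torus-fixed-point analysis on the $z=1$ locus are unchanged, and $(\bfM,\bfG)$-genericity of $\rho$ again kills the $z\neq1$ strata exactly as in Proposition \ref{prop:Sigma'}.

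The technical heart is the fiber-cohomology theorem. Because $r>0$, the depth-$r$ graded piece of $\bbL_r\bbG_r^+$ is still $\bbG_{r:r+}$, so the fibers of $X_{\bbT_r,\bbB_r}^{\bbL_r\bbG_r^+}\to X_{\bbT_{r-1},\bbB_{r-1}}^{\bbL_{r-1}\bbG_{r-1}^+}$ are again of the form $\{x_r\in\bbG_{r:r+}:x_r^{-1}\sigma(x_r)\in x\bbU_{r:r+}x^{-1}\}$; the only change is that the base point $\bar x$ now lies in $\bbL_0$, so $u=\bar x^{-1}\sigma(\bar x)$ has nonzero root-components only along $\Phi^+(\bfL,\bfT)$. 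The sheaf-theoretic reduction of Section \ref{subsec:fiber cohomology} still reduces matters to the number $|A|=[\bbG_{r:r+}^\sigma:(x\bbT_{r:r+}x^{-1})^\sigma\cdot\Nm(x\bbU_{r:r+}^{\sigma^m}x^{-1})]$, and the dimension count of Proposition \ref{prop:|A| elliptic} goes through: confining $u$ to $\Phi^+(\bfL,\bfT)$ only shrinks the span $\Span_\bbR\{\alpha\in\Phi^+:u_{\sigma^i(\alpha)}\neq1\text{ for some }i\}$ appearing in Claims \ref{claim:xTx} and \ref{claim:xUx}, while the key inequality \eqref{eq:beta inequality} still only needs a $\delta\in\Phi^+(\bfG,\bfT)$ with $\sigma^i(\delta)=-\beta$, which exists whenever $\bfT$ is elliptic. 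This yields the analogues of Theorem \ref{thm:level lower} and Corollary \ref{cor:depth compatibility}, both for $\bbL_r\bbG_r^+$ and for the partial-depth groups $(\bfL\cap\bfG^i)_{r_{i-1}}(\bfG^i)_{0+:r_{i-1}+}$ attached to the Levi subgroups of a Howe factorization, using that ellipticity of $\bfT$ in the Levi $\bfM$ supplied by split-genericity descends to the smaller Levis $\bfG^i\cap\bfL$ and $\bfG^i$ (a quotient of an anisotropic torus being anisotropic).

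With these in hand, the argument of Proposition \ref{prop:Howe} shows $R_{\bbT_r,\bbB_r}^{\bbL_r\bbG_r^+}(\theta)=r_{\bbT_r}^{\bbL_r\bbG_r^+}(\vec\phi;\vec\bfP)$ for any Howe factorization $\vec\phi$ of $\theta$, and the induction on Howe length in Proposition \ref{prop:half scalar product}---peeling off one layer at a time by the generic Mackey formula, the twisting lemma, and the invariants lemma---gives
\begin{equation*}
  \langle R_{\bbT_r,\bbB_r}^{\bbL_r\bbG_r^+}(\theta),\,r_{\bbT_r'}^{\bbL_r\bbG_r^+}(\vec\phi';\vec\bfP')\rangle_{(\bbL_r\bbG_r^+)^\sigma}=\sum_{w\in W_{\bbL_r\bbG_r^+}(\bbT_r,\bbT_r')^\sigma}\langle\theta,\ad(w)^*\theta'\rangle_{\bbT_r^\sigma},
\end{equation*}
and applying this with a Howe factorization of $\theta'$ proves the theorem, independence of $\bfB$ following by the inner-product trick of \cite[Corollary 2.4]{Lus04}. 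The main obstacle is not any single new computation---the $|A|$-count of the fiber-cohomology step is, if anything, easier here---but the bookkeeping for the partial-depth groups $\bbL_r\bbG_r^+$ and their Howe-tower analogues, which are \emph{not} of the form $\bbH_r$ for a connected reductive $\bfH$: one must check that each structure-theoretic ingredient (transitivity, the $\pInd$-factorization, the construction of $r_{\bbT_r}^{\bbL_r\bbG_r^+}$, the torus actions in the Mackey argument, and the identification of the relevant $\cS$- and Weyl-group index sets) remains valid when the depth-zero reductive quotient is a proper Levi-type subgroup of the ambient reductive quotient rather than the whole of it.
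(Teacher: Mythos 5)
Your proposal is correct and takes essentially the same route as the paper's own proof: replay the structural results of Sections \ref{sec:parahoric lusztig}--\ref{sec:generic mackey} and the Howe-factorization induction of Section \ref{sec:scalar} for $\bbL_r\bbG_r^+$ with only notational changes, the crux being the depth-lowering fiber computation. The only (minor) difference is that the paper shortcuts your re-run of the $|A|$-count by noting that, since the Drinfeld stratification is pulled back from depth zero, the fibers of $X_{\bbT_r,\bbB_r}^{\bbL_r\bbG_r^+}\to X_{\bbT_{r-1},\bbB_{r-1}}^{\bbL_{r-1}\bbG_{r-1}^+}$ are literally among those already treated in Theorem \ref{thm:fiber cohomology}, which is the same observation you make via the support of $u$ in $\Phi^+(\bfL,\bfT)$.
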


\begin{proof}
  The properties of parahoric Deligne--Lusztig induction presented in Section \ref{sec:parahoric lusztig} have direct analogues for $R_{\bbT_r,\bbB_r}^{\bbL_r\bbG_r^+}$ and the proofs go through with only notational changes. The same is the case for Section \ref{sec:generic mackey} and especially Theorem \ref{thm:generic mackey}, the generic Mackey formula. Here, note that the generalized Bruhat decomposition in Section \ref{subsec:generic mackey} should be intersected with $\bbL_r \bbG_r^+ \subset \bbG_r$. The crux then is to see that the fiber calculations in Section \ref{sec:depth compatibility}. But this is again straightforward---since the Drinfeld stratification on $X_{\bbT_r,\bbB_r}^{\bbG_r}$ is defined by pullback from a stratification on $X_{\bbT_0,\bbB_0}^{\bbG_0}$, the fiber cohomology calculations required to establish Theorem \ref{thm:level lower} for a Drinfeld stratum is a special case of Theorem \ref{thm:fiber cohomology}. (A particular case to keep in mind is the closed Drinfeld stratum $X_{\bbT_r,\bbB_r}^{\bbT_r\bbG_r^+}$. This stratum lies over the locus $\bbG_0^\sigma \subset X_{\bbT_0,\bbB_0}^{\bbG_0}$ which corresponds to $u = 1$ in the notation of Section \ref{subsec:fiber cohomology}. The proof of Proposition \ref{prop:|A| elliptic} proves that the fibers of $X_{\bbT_r,\bbB_r}^{\bbT_r\bbG_r^+} \to X_{\bbT_{r-1},\bbB_{r-1}}^{\bbT_{r-1}\bbG_{r-1}^+}$ in fact all have the \textit{same} cohomology, even before taking $\bbT_{r:r+}^\sigma$-fixed points. Moreover, this phenomenon does not happen for any other stratum.)
\end{proof}

\providecommand{\bysame}{\leavevmode\hbox to3em{\hrulefill}\thinspace}
\providecommand{\MR}{\relax\ifhmode\unskip\space\fi MR }
\providecommand{\MRhref}[2]{%
  \href{http://www.ams.org/mathscinet-getitem?mr=#1}{#2}
}
\providecommand{\href}[2]{#2}

\end{document}